\newtheorem{theorem}{Theorem}[section]
\newtheorem{proposition}[theorem]{Proposition}
\newtheorem{lemma}[theorem]{Lemma}
\theoremstyle{definition}
\newtheorem{problem}[theorem]{Problem}
\newtheorem{remark}[theorem]{Remark}
\newcommand{\<}{\left\langle}
\renewcommand{\>}{\right\rangle}
\newcommand{\eps}{\epsilon}
\newcommand{\rr}{ \mathbb{R}}
\newcommand{\di}{ \mathbf{d}}
\newcommand{\ti}{ \mathbf{t}}
\newcommand{\si}{ \mathbf{s}}
\newcommand{\pc}{p}
\newcommand{\dR}{\mathbb{R}}
\newcommand{\rmd}{\mathrm{d}}
\newcommand{\olB}{\overline{B}}
\DeclareMathOperator{\dist}{dist}
\DeclareMathOperator{\diam}{diam}
\DeclareMathOperator{\opdiv}{div}
\newcommand{\dint}{\,\rmd}
\newcommand{\ssm}{\backslash}
\newcommand{\lr}[3]{#1#3#2}
\newcommand{\xlr}[3]{\left#1#3\right#2}
\newcommand{\biglr}[3]{\bigl#1#3\bigr#2}
\newcommand{\bigglr}[3]{\biggl#1#3\biggr#2}
\newcommand{\abs}[1]{\lr\lvert\rvert{#1}}
\newcommand{\xabs}[1]{\xlr\lvert\rvert{#1}}
\newcommand{\bigabs}[1]{\biglr\lvert\rvert{#1}}
\newcommand{\biggabs}[1]{\bigglr\lvert\rvert{#1}}
\newcommand{\norm}[1]{\lr\lVert\rVert{#1}}
\newcommand{\fracwithdelims}[4]{\genfrac{#1}{#2}{}{}{#3}{#4}}
\newcommand{\coloneqq}{:=}
\begin{document}
\title[Boundary clustered layers near the higher critical exponents]{Boundary clustered layers near the higher critical exponents}
\author{Nils Ackermann}
\address{Instituto de Matem\'{a}ticas, Universidad Nacional Aut\'{o}noma de M\'{e}xico,
Circuito Exterior, C.U., 04510 M\'{e}xico D.F., Mexico.\smallskip}
\email{nils@ackermath.info}
\author{M\'{o}nica Clapp}
\address{Instituto de Matem\'{a}ticas, Universidad Nacional Aut\'{o}noma de M\'{e}xico,
Circuito Exterior, C.U., 04510 M\'{e}xico D.F., Mexico.\smallskip}
\email{monica.clapp@im.unam.mx}
\author{Angela Pistoia}
\address{Dipartimento di Metodi e Modelli Matematici, Universit\`{a} "La Sapienza" di
Roma, via Antonio Scarpa 16, 00161 Roma, Italia.}
\email{pistoia@dmmm.uniroma1.it}
\thanks{This research was partially supported by CONACYT grant 129847 and
PAPIIT-DGAPA-UNAM grant IN106612 (Mexico), and by exchange funds of the
Universit\`{a} \textquotedblleft La Sapienza\textquotedblright\ di Roma (Italy).}
\date{October 2012}

\begin{abstract}
We consider the supercritical problem
\begin{equation*}
-\Delta u=\left\vert u\right\vert ^{p-2}u\text{ \ in }\Omega,\quad u=0\text{
\ on }\partial\Omega,
\end{equation*}
where $\Omega$ is a bounded smooth domain in $\mathbb{R}^{N}$ and $p$ smaller
than the critical exponent $2_{N,k}^{\ast}:=\frac{2(N-k)}{N-k-2}$ for the
Sobolev embedding of $H^{1}(\mathbb{R}^{N-k})$ in $L^{q}(\mathbb{R}^{N-k})$,
$1\leq k\leq N-3.$ We show that in some suitable domains $\Omega$ there are
positive and sign changing solutions with positive and negative layers which
concentrate along one or several $k$-dimensional submanifolds of
$\partial\Omega$ as $p$ approaches $2_{N,k}^{\ast}$ from below.

\textsc{Key words: }Nonlinear elliptic boundary value problem; critical and
supercritical exponents; existence of positive and sign changing solutions.

\textsc{MSC2010: }35J60, 35J20.

\end{abstract}
\maketitle

\section{Introduction}

Consider the classical Lane-Emden-Fowler problem
\begin{equation}
\Delta v+|v|^{p -2}v=0\quad\text{in }\ \mathcal{D},\qquad v=0\quad\text{on
}\partial\mathcal{D}, \label{sup-eq}
\end{equation}
where $\mathcal{D}$ is a bounded smooth domain in $\mathbb{R}^{N}$ and $p >2.
$

It is well known that when $p$ is smaller than the critical Sobolev exponent
$2^{\ast}:=\frac{2N}{N-2},$ compactness of the Sobolev embedding ensures the
existence of at least one positive solution and infinitely many sign changing
solutions. In contrast, existence of solutions to problem (\ref{sup-eq}) when
$p\geq2^{\ast}$ is a delicate issue. Pohozhaev's identity \cite{MR0192184}
implies that problem (\ref{sup-eq}) does not have a nontrivial solution if the
domain $\mathcal{D}$ is strictly starshaped. On the other hand, Kazdan and
Warner showed in \cite{MR0477445} that if the domain $\mathcal{D}$ is an
annulus, problem (\ref{sup-eq}) has infinitely many radial solutions.

For the critical case $p=2^{\ast}$ Bahri and Coron \cite{MR89c:35053} proved
that a positive solution of (\ref{sup-eq}) exists if the domain $\mathcal{D}$
has nontrivial reduced homology with $\mathbb{Z}/2$-coefficients. Moreover, it
was proved by Ge, Musso and Pistoia \cite{MR2754050} and Musso and Pistoia
\cite{MR2281450} that, if $\mathcal{D}$ has a small hole, problem
(\ref{sup-eq}) has many sign changing solutions, whose number increases as the
diameter of the hole decreases. Multiplicity results are also available for
domains which are not small perturbations of a given domain, but have enough,
possibly finite, symmetries, as proved by Clapp and Pacella \cite{MR2395127}
and Clapp and Faya \cite{clapp/faya:2012}.

The almost critical case $p=2^{\ast}\pm\epsilon,$ with $\epsilon$ positive and
small enough, has been widely studied. The slightly subcritical case
$p=2^{\ast}-\epsilon$ was considered by Bahri, Li and Rey \cite{MR1384837} and
Rey \cite{MR1040954}, who showed the existence of positive solutions which
blow-up at one or more points of $\mathcal{D}$ as $\epsilon\rightarrow0$. A
large number of sign changing solutions with simple or multiple positive and
negative blow-up points were constructed by Bartsch, Micheletti and Pistoia
\cite{MR2232205}, Musso and Pistoia \cite{MR2579374}, and Pistoia and Weth
\cite{MR2310698}. For the slightly supercritical case $p=2^{\ast}+\epsilon$
existence and nonexistence of positive solutions with one or more blow-up
points has been established by Ben Ayed, El Mehdi, Grossi and Rey
\cite{MR1979006}, Pistoia and Rey \cite{MR2238023}, and del Pino, Felmer and
Musso \cite{MR1966257}.

Unlike the critical case, in the supercritical case $p>2^{\ast}$ the existence
of a nontrivial homology class in $\mathcal{D}$ does not guarantee the
existence of a nontrivial solution to (\ref{sup-eq}). In fact, for each
integer $k$ such that $1\leq k\leq N-3,$ Passaseo \cite{MR1220984, MR1306576}
exhibited a bounded smooth domain in $\mathbb{R}^{N},$ homotopically
equivalent to the $k$-dimensional sphere, in which problem (\ref{sup-eq}) does
not have a nontrivial solution for $p\geq{2}_{N,k}^{\ast}:={\frac
{2(N-k)}{N-k-2}}.$ Note that ${2}_{N,k}^{\ast}$ is the critical Sobolev
exponent in dimension $N-k.$ Examples of domains with richer homology were
recently given by Clapp, Faya and Pistoia \cite{clapp/faya/pistoia:2012},
where it was shown that for $p>{2}_{N,k}^{\ast}$ there are bounded smooth
domains in $\mathbb{R}^{N}$ whose cup-length is $k+1,$ in which problem
(\ref{sup-eq}) does not have a nontrivial solution. On the other hand, for
$p={2}_{N,k}^{\ast}$ existence of infinitely many solutions in some domains
has been recently established by Wei and Yan \cite{MR2832637}. Further
multiplicity results may be found in \cite{clapp/faya/pistoia:2012}.

In \cite{MR2734352} del Pino, Musso and Pacard considered the case
$p={2}_{N,1}^{\ast}-\epsilon$ and proved that for some suitable domains
$\mathcal{D}$, if $\epsilon$ is positive, small enough and different from an
explicit set of values, problem (\ref{sup-eq}) has a positive solution which
concentrates along a $1$-dimensional submanifold of the boundary
$\partial\mathcal{D}.$ In the same paper, the authors ask the question whether
one can find solutions which concentrate at a $k$-dimensional submanifold for
$p$ slightly below ${2}_{N,k}^{\ast}.$ More precisely, they ask the following:

\begin{problem}
Given $1\leq k\leq N-3,$ are there domains $\mathcal{D}$ in which problem
\emph{(\ref{sup-eq})} has a positive solution $v_{p}$ for each $p<{2}
_{N,k}^{\ast}$ with the property that these solutions concentrate along a
$k$-dimensional submanifold of the boundary $\partial\mathcal{D}$ as
$p\rightarrow{2}_{N,k}^{\ast}?$
\end{problem}

Having in mind that when $p $ approaches the first critical exponent $2^{\ast
}$ from below a large number of sign changing solutions exist, another
question arises naturally:

\begin{problem}
Given $1\leq k\leq N-3,$ are there domains $\mathcal{D}$ in which problem
\emph{(\ref{sup-eq})} has a sign changing solution $v_{p}$ for each
$p<{2}_{N,k}^{\ast}$ with the property that these solutions concentrate along
a $k$-dimensional submanifold of the boundary $\partial\mathcal{D}$ as
$p\rightarrow{2}_{N,k}^{\ast}?$
\end{problem}

In this paper, we give a positive answer to both questions. In particular, for
each set of positive integers $k_{1},\ldots,k_{m}$ with $k:=k_{1}+\cdots
+k_{m}\leq N-3$ we exhibit domains $\mathcal{D}$ in which problem
(\ref{sup-eq}) has a positive solution for each $p<{2}_{N,k}^{\ast}$ and, as
$p\rightarrow{2}_{N,k}^{\ast},$ these solutions concentrate along a
$k$-dimensional submanifold $M$ of the boundary $\partial\mathcal{D}$ which is
diffeomorphic to the product of spheres $\mathbb{S}^{k_{1}}\times\cdots
\times\mathbb{S}^{k_{m}}.$ Moreover, problem (\ref{sup-eq}) has also a sign
changing solution with a positive and a negative layer, both of which
concentrate along $M$ as $p\rightarrow{2}_{N,k}^{\ast}.$ This follows from our
main results, which we next state.

Fix $k_{1},\ldots,k_{m}\in\mathbb{N}$ with $k:=k_{1}+\cdots+k_{m}\leq N-3$ and
a bounded smooth domain $\Omega$ in $\mathbb{R}^{N-k}$ such that
\begin{equation}
\overline{\Omega}\subset\{\left(  x_{1},\ldots,x_{m},x^{\prime}\right)
\in\mathbb{R}^{m}\times\mathbb{R}^{N-k-m}:x_{i}>0,\text{ }i=1,\ldots,m\}.
\label{omega}
\end{equation}
Set
\begin{equation}
\mathcal{D}:=\{(y^{1},\ldots,y^{m},z)\in\mathbb{R}^{k_{1}+1}\times\cdots
\times\mathbb{R}^{k_{m}+1}\times\mathbb{R}^{N-k-m}:\left(  \left\vert
y^{1}\right\vert ,\ldots,\left\vert y^{m}\right\vert ,z\right)  \in\Omega\}.
\label{D}
\end{equation}
$\mathcal{D}$ is a bounded smooth domain in $\mathbb{R}^{N}$ which is
invariant under the action of the group $\Gamma:=O(k_{1}+1)\times\cdots\times
O(k_{m}+1)$ on $\mathbb{R}^{N}$ given by
\begin{equation*}
(g_{1},\ldots,g_{m})(y^{1},\ldots,y^{m},z):=(g_{1}y^{1},\ldots,g_{m}y^{m},z).
\end{equation*}
for every $g_{i}\in O(k_{i}+1),$ $y^{i}\in\mathbb{R}^{k_{i}+1},$
$z\in\mathbb{R}^{N-k-m}.$ Here, as usual, $O(d)$ denotes the group of all
linear isometries of $\mathbb{R}^{d}.$ For $p={2}_{N,k}^{\ast}-\epsilon$ we
shall look for $\Gamma$-invariant solutions to problem (\ref{sup-eq}), i.e.
solutions $v$ of the form
\begin{equation}
v(y^{1},\ldots,y^{m},z)=u(\left\vert y^{1}\right\vert ,\ldots,\left\vert
y^{m}\right\vert ,z). \label{inv}
\end{equation}
A simple calculation shows that $v$ solves problem (\ref{sup-eq}) if and only
if $u$ solves
\begin{equation*}
-\Delta u-\sum_{i=1}^{m}\frac{k_{i}}{x_{i}}\frac{\partial u}{\partial x_{i}
}=|u|^{p-2}u\quad\text{in}\ \Omega,\qquad u=0\quad\text{on}\ \partial\Omega.
\end{equation*}
This problem can be rewritten as
\begin{equation*}
-\text{div}(a(x)\nabla u)=a(x)|u|^{p-2}u\quad\text{in}\ \Omega,\qquad
u=0\quad\text{on}\ \partial\Omega,
\end{equation*}
where $a(x_{1},\ldots,x_{N-k}):=x_{1}^{k_{1}}\cdots x_{m}^{k_{m}}.$ Note that
${2}_{N,k}^{\ast}$ is the critical exponent in dimension $n:=N-k$ which is the
dimension of $\Omega.$

Thus, we are lead to study the more general almost critical problem
\begin{equation}
-\text{div}(a(x)\nabla u)=a(x)\left\vert u\right\vert ^{{\frac{4}{n-2}
}-\epsilon}u\quad\text{in}\ \Omega,\qquad u=0\quad\text{on}\ \partial\Omega,
\label{p1}
\end{equation}
where $\Omega$ is a bounded smooth domain in $\mathbb{R}^{n},$ $n\geq3,$
$\epsilon$ is a positive parameter, and $a\in\mathcal{C}^{2}(\overline{\Omega
})$ is strictly positive on $\overline{\Omega}.$

This is a subcritical problem, so standard variational methods yield one
positive and infinitely many sign changing solutions to problem (\ref{p1}) for
every $\epsilon\in(0,{\frac{4}{n-2})}$, cf. Proposition 4.1 in
\cite{clapp/faya/pistoia:2012}. Our goal is to construct solutions
$u_{\epsilon}$ with positive and negative bubbles which accumulate at some
points $\xi_{1},\ldots,\xi_{\kappa}$ of $\partial\Omega$ as $\epsilon
\rightarrow0.$ They correspond, via (\ref{inv}), to $\Gamma$-invariant
solutions $v_{\epsilon}$ of problem (\ref{sup-eq}) with positive and negative
layers which accumulate along the $k$-dimensional submanifolds
\begin{equation*}
M_{j}:=\{(y^{1},\ldots,y^{m},z)\in\mathbb{R}^{k_{1}+1}\times\cdots
\times\mathbb{R}^{k_{m}+1}\times\mathbb{R}^{N-k-m}:\left(  \left\vert
y^{1}\right\vert ,\ldots,\left\vert y^{m}\right\vert ,z\right)  =\xi_{j}\}
\end{equation*}
of the boundary of $\mathcal{D}$ as $\epsilon\rightarrow0.$ Note that each
$M_{j}$ is diffeomorphic to $\mathbb{S}^{k_{1}}\times\cdots\times
\mathbb{S}^{k_{m}}$ where $\mathbb{S}^{d}$ is the unit sphere in
$\mathbb{R}^{d+1}.$

We will assume one of the following conditions.

\begin{enumerate}
\item[$(a1)$] There exist $\kappa$ nondegenerate critical points $\xi
_{1},\dots,\xi_{\kappa}\in\partial\Omega$ of the restriction of $a$ to
$\partial\Omega$ such that
\begin{equation*}
\langle\nabla a(\xi_{i}),\nu(\xi_{i})\rangle>0\qquad\forall i=1,\dots,\kappa,
\end{equation*}
where $\nu(\xi_{i})$ is the inward pointing unit normal to $\partial\Omega$ at
$\xi_{i}.$

\item[$(a2)$] There exists a critical point $\xi_{0}\in\partial\Omega$ of the
restriction of $a$ to $\partial\Omega$ such that $\langle\nabla a(\xi_{0}
),\nu(\xi_{0})\rangle>0,$ and vectors $\tau_{1},\dots,\tau_{n-1}\in
\mathbb{R}^{n}$ such that the set $\{\nu(\xi_{0}),\tau_{1},\dots,\tau_{n-1}\}$
is orthonormal and $\Omega$ and $a$ are invariant with respect to the
reflection $\varrho_{i}$ on each of the hyperplanes $\xi_{0}+\{\tau_{i}=0\}$,
i.e.
\begin{equation*}
\varrho_{i}(x)\in\Omega\text{\qquad and\qquad}a(\varrho_{i}
(x))=a(x)\text{\qquad}\forall x\in\Omega,
\end{equation*}
$i=1,...,n-1,$ where
\begin{align*}
&  \varrho_{i}(\xi_{0}+\left\langle x,\nu\right\rangle \nu+\left\langle
x,\tau_{1}\right\rangle \tau_{1}+\cdots+\left\langle x,\tau_{i}\right\rangle
\tau_{i}+\cdots+\left\langle x,\tau_{n-1}\right\rangle \tau_{n-1})\\
&  =\xi_{0}+\left\langle x,\nu\right\rangle \nu+\left\langle x,\tau
_{1}\right\rangle \tau_{1}+\cdots-\left\langle x,\tau_{i}\right\rangle
\tau_{i}+\cdots+\left\langle x,\tau_{n-1}\right\rangle \tau_{n-1}.
\end{align*}
and $\nu:=\nu(\xi_{0})$ is the inward pointing unit normal to $\partial\Omega$
at $\xi_{0}.$
\end{enumerate}

For each $\delta>0,\ \xi\in\mathbb{R}^{n},$\ we consider the standard bubble
\begin{equation*}
U_{\delta,\xi}(x):=[n(n-2)]^{\frac{n-2}{4}}{\frac{\delta^{\frac{n-2}{2}}
}{\left(  \delta^{2}+|x-\xi|^{2}\right)  ^{\frac{n-2}{2}}}.}
\end{equation*}
We prove the following results.

\begin{theorem}
\label{main3}Assume that $(a1)$ holds true. Then there exists $\epsilon_{0}>0
$ such that, for each $\lambda_{1},\ldots,\lambda_{\kappa}\in\{0,1\}$ and
$\epsilon\in(0,\epsilon_{0}),$ problem \emph{(\ref{p1})} has a solution
$u_{\epsilon}$ which satisfies
\begin{equation*}
u_{\epsilon}(x)=\sum\limits_{i=1}^{\kappa}(-1)^{\lambda_{i}}U_{\delta
_{i,\epsilon}, \xi_{i,\epsilon}}(x)+o(1)\qquad\text{in }D^{1,2}(\Omega),
\end{equation*}
with
\begin{equation*}
\epsilon^{-{\frac{n-1}{n-2}}}\delta_{i,\epsilon}\rightarrow d_{i}
>0\qquad\text{and}\qquad\xi_{i,\epsilon}\rightarrow\xi_{i}\in\partial\Omega,
\end{equation*}
for each $i=1,\dots,\kappa,$ as $\epsilon\rightarrow0.$
\end{theorem}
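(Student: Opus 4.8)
The plan is to use the standard Lyapunov–Schmidt reduction adapted to the weighted operator $-\operatorname{div}(a(x)\nabla\cdot)$ near a boundary point. I would work in the Hilbert space $H := D^{1,2}(\Omega)$ (or rather $H^1_0(\Omega)$ with the equivalent $a$-weighted inner product $\langle u,v\rangle_a := \int_\Omega a(x)\nabla u\cdot\nabla v$) and recast \eqref{p1} as $u = \imath_\epsilon^{*}\big(a(x)|u|^{4/(n-2)-\epsilon}u\big)$, where $\imath_\epsilon^{*}$ is the adjoint of the Sobolev embedding for the subcritical exponent $p_\epsilon := \frac{2n}{n-2}-\epsilon$. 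The first step is to choose the right ansatz: since the points $\xi_i$ lie on $\partial\Omega$, one cannot simply insert the bubbles $U_{\delta_i,\xi_i}$, which do not vanish on $\partial\Omega$. Following del Pino–Musso–Pacard and the boundary-layer literature, I would use the projected bubbles $PU_{\delta,\xi} := \imath^{*}(U_{\delta,\xi}^{(n+2)/(n-2)})$ (the $a$-harmonic, or simply harmonic to leading order, correction making the function vanish on $\partial\Omega$), placing $\xi = \xi_{i,\epsilon}$ slightly inside $\Omega$ at distance of order $\delta_{i,\epsilon}$ from the boundary near $\xi_i$. The approximate solution is $W := \sum_{i=1}^{\kappa}(-1)^{\lambda_i}PU_{\delta_{i,\epsilon},\xi_{i,\epsilon}}$, with concentration parameters in the regime $\delta_{i,\epsilon}\sim d_i\,\epsilon^{(n-1)/(n-2)}$ dictated by balancing the three competing effects: the supercritical gap $\epsilon$, the boundary-reflection interaction of each bubble with itself (of order $(\delta/\operatorname{dist}(\xi,\partial\Omega))^{n-2}$), and the drift term $\langle\nabla a(\xi_i),\nu(\xi_i)\rangle\,\delta$ coming from the inhomogeneous weight $a$.

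The second step is the \emph{finite-dimensional reduction}. Linearizing at $W$ and using the nondegeneracy of the bubble (the kernel of $-\Delta - (n+2)/(n-2)\,U^{4/(n-2)}$ on $D^{1,2}(\mathbb{R}^n)$ is spanned by $\partial_\delta U$ and $\partial_{\xi_j} U$), one shows that the linearized operator $L_\epsilon$ restricted to the orthogonal complement $K^{\perp}$ of the approximate kernel $K := \operatorname{span}\{P\partial_\delta U_{\delta_i,\xi_i},\, P\partial_{(\xi_i)_j} U_{\delta_i,\xi_i}\}$ is invertible, with inverse bounded independently of $\epsilon$ (here the parameters must be kept in a compact range, $d/2 \le \epsilon^{-(n-1)/(n-2)}\delta_i \le 2d$ and $\xi_i$ in a neighborhood of $\xi_i^0$). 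A contraction-mapping argument then produces, for each choice of parameters $(\delta_i,\xi_i)$ in that range, a unique small $\phi = \phi_\epsilon(\delta,\xi) \in K^{\perp}$ such that $W + \phi$ solves \eqref{p1} up to an element of $K$; standard estimates give $\|\phi_\epsilon\| = o(\|$error$\|) = o(\epsilon^{\text{something}})$, with smooth (in fact $C^1$) dependence on the parameters. This step is essentially routine once the error estimate $\|-\operatorname{div}(a\nabla W) - a|W|^{p_\epsilon-2}W\|$ is computed; the one-bubble boundary case was done in \cite{MR2734352}, and the multi-bubble case with signs $(-1)^{\lambda_i}$ only adds interaction terms between far-apart bubbles, which are exponentially negligible since the $\xi_i$ are at fixed mutual distance.

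The third, decisive step is solving the \emph{reduced (bifurcation) equation}. One expands the reduced energy $J_\epsilon(\delta,\xi) := I_\epsilon(W + \phi_\epsilon)$, where $I_\epsilon$ is the functional associated to \eqref{p1}, and shows
\begin{equation*}
J_\epsilon(\delta,\xi) = \kappa\, c_n + \sum_{i=1}^{\kappa}\Big( c_1\,\epsilon\,\delta_i^{?}\log\tfrac1{\delta_i} + c_2\,\langle\nabla a(\xi_i),\nu(\xi_i)\rangle\,\delta_i + c_3\,a(\xi_i)\,\big(\tfrac{\delta_i}{\operatorname{dist}(\xi_i,\partial\Omega)}\big)^{n-2} + \cdots\Big) + o(\cdots),
\end{equation*}
with the precise powers and constants fixed by the explicit bubble integrals (I would not grind these through here). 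Rescaling $\delta_i = \epsilon^{(n-1)/(n-2)} t_i$ and writing $\xi_i$ in boundary normal coordinates at $\xi_i^0$ turns this into a $C^1$-small perturbation, in the parameters $(t_i,\xi_i)$, of an explicit finite-dimensional function whose leading part has a \emph{nondegenerate} critical point: the $t_i$-equation is a one-variable balancing that has a unique positive nondegenerate root $t_i = d_i$ precisely because $\langle\nabla a(\xi_i^0),\nu(\xi_i^0)\rangle > 0$ (this sign makes the sum of the $\epsilon$-term and the $a$-drift term convex-like with an interior minimum), and the $\xi_i$-equation reduces to finding a critical point of $\xi\mapsto a|_{\partial\Omega}(\xi)$ near $\xi_i^0$, which by hypothesis $(a1)$ is nondegenerate. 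A standard degree/implicit-function argument then yields a critical point $(\delta_{i,\epsilon},\xi_{i,\epsilon})$ of $J_\epsilon$ with the stated asymptotics, and by the variational nature of the reduction the corresponding $W + \phi_\epsilon$ is a genuine solution of \eqref{p1}. The main obstacle is carrying out the energy expansion \emph{to the right order}—one must expand far enough that the $a$-drift term $\langle\nabla a,\nu\rangle\delta$ and the boundary-interaction term appear at the same order and compete, and one must control the contribution of the correction $\phi_\epsilon$ to $J_\epsilon$ (showing it is of strictly lower order, typically via a $C^1$-estimate on $\phi_\epsilon$ or the Pohozaev-type identities); the signs $(-1)^{\lambda_i}$ play no role in the reduced equation because cross terms are exponentially small, which is exactly why the same construction gives both the positive ($\lambda_i \equiv 0$) and the sign-changing solutions simultaneously.
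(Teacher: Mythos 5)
The overall framework you describe---project the bubbles onto $H^1_0(\Omega)$, perform a finite-dimensional Ljapunov--Schmidt reduction, and solve the reduced equation by locating a nondegenerate critical point of the leading-order reduced energy---is indeed the route the paper takes. However, there is a genuine error in the scaling of the concentration points, and it is not a cosmetic one: it would cause the very balancing argument you invoke to fail.

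You place $\xi_{i,\epsilon}$ ``at distance of order $\delta_{i,\epsilon}$ from the boundary.'' In the paper the ansatz is $\xi_{i,\epsilon}=s_i+\eta_i\nu(s_i)$ with $s_i\in\partial\Omega$ and $\eta_i=\epsilon t_i$, while $\delta_{i,\epsilon}=d_i\epsilon^{(n-1)/(n-2)}$, so that $\delta_{i,\epsilon}/\eta_i\sim\epsilon^{1/(n-2)}\to0$; the points sit at distance $\sim\epsilon$ from the boundary, which is much larger than $\delta_{i,\epsilon}$. This mismatch propagates into your energy expansion. The boundary self-interaction of a bubble is of size $(\delta/\operatorname{dist}(\xi,\partial\Omega))^{n-2}$, as you say, but with $\operatorname{dist}\sim\delta$ this quantity is $O(1)$, comparable to the main term of the energy, so the linearized operator is no longer a small perturbation of the bubble's Jacobi operator and Proposition \ref{pro1}'s coercivity estimate would fail. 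Likewise the ``drift'' contribution from the weight $a$ is $\langle\nabla a(s_i),\nu(s_i)\rangle\,\operatorname{dist}(\xi_i,\partial\Omega)$ (the Taylor expansion of $a$ about the boundary foot point $s_i$, see \eqref{eq:100}), not $\langle\nabla a,\nu\rangle\,\delta_i$ as you wrote. With the paper's scaling all three effects---$\eta_i\sim\epsilon$, $(\delta_i/\eta_i)^{n-2}\sim\epsilon$, and $\epsilon\log\delta_i\sim\epsilon\log\epsilon+\epsilon$---land at the same order, producing exactly the finite-dimensional function
\begin{equation*}
(\mathbf d,\mathbf t)\mapsto\sum_{i=1}^{\kappa}\left[c_4\langle\nabla a(\xi_i),\nu(\xi_i)\rangle t_i+c_5\,a(\xi_i)\Bigl(\tfrac{d_i}{2t_i}\Bigr)^{n-2}-c_6\,a(\xi_i)\log d_i\right]
\end{equation*}
of \emph{two} free parameters per bubble ($d_i$ and $t_i$), which has an interior coercive minimum precisely because $\langle\nabla a(\xi_i),\nu(\xi_i)\rangle>0$. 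Your write-up collapses these two parameters into a single ``$t_i$-balancing,'' which obscures the argument and, combined with the wrong distance scaling, would not give a closed set of balance equations. A smaller point: the paper defines $PU_{\delta,\xi}$ by $\Delta PU_{\delta,\xi}=\Delta U_{\delta,\xi}$ in $\Omega$, $PU_{\delta,\xi}=0$ on $\partial\Omega$ (the \emph{Laplacian} projection), not by applying the $a$-weighted adjoint $i^*$ to $U^{(n+2)/(n-2)}$; the discrepancy between the two is exactly what produces the $\nabla a\cdot\nabla PU_i$ error term $I_1$ estimated in Appendix B.
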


\begin{theorem}
\label{main4}Assume that $(a2)$ holds true. Then there exists $\epsilon_{0}>0
$ such that, for each $\epsilon\in(0,\epsilon_{0}),$ problem \emph{(\ref{p1})}
has a sign changing solution $u_{\epsilon}$ which is invariant with respect to
each reflection $\varrho_{i},$\ $i=1,...,n-1,$ and satisfies
\begin{equation*}
u_{\epsilon}(x)=U_{\delta_{1,\epsilon},\xi_{1,\epsilon}}(x)-U_{\delta
_{2,\epsilon},\xi_{2,\epsilon}}(x)+o(1)\qquad\text{in }D^{1,2}(\Omega),
\end{equation*}
with
\begin{equation*}
\epsilon^{-{\frac{n-1}{n-2}}}\delta_{i,\epsilon}\rightarrow d_{i}>0,\quad
\xi_{i,\epsilon}=\xi_{0}+\epsilon t_{i,\epsilon}\nu(\xi_{0})\quad
\hbox{and}\quad t_{i,\epsilon}\rightarrow t_{i}>0,
\end{equation*}
for each $i=1,2,$ as $\epsilon\rightarrow0.$
\end{theorem}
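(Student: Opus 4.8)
The plan is to prove Theorem \ref{main4} by a finite‑dimensional Lyapunov--Schmidt reduction carried out in the closed subspace $H_\varrho\subset H^1_0(\Omega)$ of the functions invariant under all the reflections $\varrho_1,\dots,\varrho_{n-1}$ of $(a2)$. Solutions of (\ref{p1}) are the critical points of $J_\epsilon(u):=\tfrac12\int_\Omega a\,|\nabla u|^2\dint x-\tfrac1p\int_\Omega a\,|u|^p\dint x$ on $H^1_0(\Omega)$, with $p:=\tfrac{2n}{n-2}-\epsilon$; since $\Omega$ and $a$ are $\varrho_i$‑invariant, $J_\epsilon$ is invariant on $H_\varrho$, so by the principle of symmetric criticality it suffices to find a critical point of $J_\epsilon|_{H_\varrho}$. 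Working in $H_\varrho$ is exactly what makes $(a2)$ weaker than $(a1)$: the common fixed‑point set of the $\varrho_i$ is the normal line $\ell:=\xi_0+\dR\,\nu(\xi_0)$, so the two layers are forced to sit on $\ell$ (and the very invariance of $a$ forces $\nabla a(\xi_0)$ to be normal, i.e.\ $\xi_0$ to be a critical point of $a|_{\partial\Omega}$, so that the only extra content of $(a2)$ is $\langle\nabla a(\xi_0),\nu(\xi_0)\rangle>0$); the only free parameters left are the two dilations and the two depths along $\ell$, while the tangential near‑kernel modes, being odd under some $\varrho_j$, drop out of the reduction automatically.

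As approximate solution I would take $W_{d,t}:=PU_{\delta_1,\xi_1}-PU_{\delta_2,\xi_2}$, where $P$ is the projection onto $H^1_0(\Omega)$ associated with $-\opdiv(a\nabla\,\cdot\,)$ (so $PU$ carries the Dirichlet datum and $PU_{\delta,\xi}=U_{\delta,\xi}+o(1)$ in $D^{1,2}(\Omega)$), with centres $\xi_i:=\xi_0+\epsilon\,t_i\,\nu(\xi_0)\in\ell$ and concentration parameters $\delta_i:=d_i\,\epsilon^{(n-1)/(n-2)}$, the vector $(d_1,d_2,t_1,t_2)$ ranging over a compact subset of $\{d_i>0,\ t_i>0,\ t_1\ne t_2\}$ fixed below; note $W_{d,t}\in H_\varrho$. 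The exponent $(n-1)/(n-2)$ is dictated by the balance near $\partial\Omega$ between the energy cost $\sim(\delta_i/\dist(\xi_i,\partial\Omega))^{n-2}\sim(\delta_i/\epsilon)^{n-2}$ of the Dirichlet correction and the perturbation of the critical exponent, both of which become of order $\epsilon$ precisely when $\delta_i\sim\epsilon^{(n-1)/(n-2)}$. Then one runs the usual scheme: estimate the error $J_\epsilon'(W_{d,t})$ in a suitable weighted norm and show it has norm $o(\epsilon^{1/2})$; decompose $u=W_{d,t}+\phi$ with $\phi\in K_{d,t}^\perp\cap H_\varrho$, where $K_{d,t}:=\opspan\{\partial_{d_i}PU_{\delta_i,\xi_i},\ \partial_{t_i}PU_{\delta_i,\xi_i}:i=1,2\}$; and solve the projected (auxiliary) equation for $\phi=\phi_{d,t}$, depending continuously on $(d,t)$, with $\|\phi_{d,t}\|=o(\epsilon^{1/2})$ uniformly, by a contraction mapping argument.

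The step I expect to be the main obstacle is the \emph{uniform invertibility} of the linearised operator on $K_{d,t}^\perp\cap H_\varrho$, with a bound independent of small $\epsilon$ and of $(d,t)$ in the compact range, since the layers concentrate on the boundary of a weighted problem and everything must be uniform over an entire family of parameters. I would argue by contradiction, rescaling around each $\xi_i$ at scale $\delta_i$: because $\dist(\xi_i,\partial\Omega)/\delta_i\to\infty$, the rescaled domains exhaust $\dR^n$, the weight contributes only the constant $a(\xi_0)$, the boundary drifts off to infinity, and a bounded solution of the limiting linearised equation $-\Delta\psi=\tfrac{n+2}{n-2}U_{1,0}^{4/(n-2)}\psi$ on $\dR^n$ must lie in $\opspan\{\partial_\delta U_{1,0},\partial_\xi U_{1,0}\}$; the orthogonality conditions together with the symmetry — which in the limit becomes invariance under the reflections through the origin, killing the translation part — then force $\psi=0$. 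Hand in hand with this one needs sharp error estimates that carefully keep track of the correction $PU-U$ (small but not negligible in the energy expansion) and of the terms produced by $\nabla a$.

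It then remains to study the reduced functional $\mathcal J_\epsilon(d,t):=J_\epsilon(W_{d,t}+\phi_{d,t})$, whose critical points are genuine solutions of (\ref{p1}). Expanding, one obtains, uniformly on the compact parameter set, $\mathcal J_\epsilon(d,t)=c_\epsilon+\epsilon\bigl(\Psi(d,t)+o(1)\bigr)$, where $c_\epsilon$ is a $(d,t)$‑independent constant (the two bubble self‑energies plus the $\epsilon\log(1/\epsilon)$ term) and
\[
\Psi(d,t)=\sum_{i=1}^{2}\Bigl(-\alpha\log d_i+\beta\,\frac{d_i^{\,n-2}}{t_i^{\,n-2}}+\mu\,\langle\nabla a(\xi_0),\nu(\xi_0)\rangle\,t_i\Bigr)+\sigma\,\frac{(d_1d_2)^{(n-2)/2}}{|t_1-t_2|^{\,n-2}},
\]
with $\alpha,\beta,\mu,\sigma$ explicit positive constants depending only on $n$ and $a(\xi_0)$; the four contributions come respectively from the $\epsilon$‑expansion of the nonlinearity, from the half‑space Robin function at $\partial\Omega$, from the linear part of the Taylor expansion of $a$ at $\xi_0$, and from the (repulsive, the two layers having opposite signs) interaction of the two bubbles. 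Because $\langle\nabla a(\xi_0),\nu(\xi_0)\rangle>0$, each term tends to $+\infty$ at the corresponding face of the boundary of $\{d_i>0,\ t_i>0,\ t_1\ne t_2\}$ and as any coordinate tends to $\infty$, so $\Psi$ is coercive there and attains its infimum at an interior point $(d^*,t^*)$ with all coordinates positive and $t_1^*\ne t_2^*$; this fixes the compact parameter set as a closed neighbourhood $\mathcal K$ of $(d^*,t^*)$ with $\min_{\partial\mathcal K}\Psi>\Psi(d^*,t^*)$. By the $C^0$ convergence $\mathcal J_\epsilon/\epsilon\to\Psi$, for $\epsilon$ small $\mathcal J_\epsilon$ attains its minimum over $\mathcal K$ at an interior point $(d_\epsilon,t_\epsilon)\to(d^*,t^*)$; hence $u_\epsilon:=W_{d_\epsilon,t_\epsilon}+\phi_{d_\epsilon,t_\epsilon}$ solves (\ref{p1}), it is $\varrho_i$‑invariant by construction, it is sign changing because $u_\epsilon(\xi_{1,\epsilon})>0>u_\epsilon(\xi_{2,\epsilon})$ for $\epsilon$ small, and — using $PU_{\delta_i,\xi_i}=U_{\delta_i,\xi_i}+o(1)$ and $\phi_{d_\epsilon,t_\epsilon}=o(1)$ in $D^{1,2}(\Omega)$, together with $\delta_{i,\epsilon}=d_{i,\epsilon}\epsilon^{(n-1)/(n-2)}$ and $\xi_{i,\epsilon}=\xi_0+\epsilon t_{i,\epsilon}\nu(\xi_0)$ — it has exactly the asserted form and asymptotics.
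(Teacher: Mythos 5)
Your argument follows essentially the same Ljapunov--Schmidt route as the paper: the same ansatz $PU_{\delta_1,\xi_1}-PU_{\delta_2,\xi_2}$, the same scalings $\delta_i\sim\epsilon^{(n-1)/(n-2)}$ and $\xi_i=\xi_0+\epsilon t_i\nu(\xi_0)$, the same restriction to the $\varrho_i$-invariant subspace (which removes the tangential near-kernel modes), and the same reduction to a coercive limiting functional $\Psi(d,t)$ with a stable interior minimum, followed by Proposition \ref{pro3}. You are more explicit than the paper about invoking symmetric criticality and about which cokernel directions survive the symmetry, but this is the same content. Two small corrections are in order. First, the paper's projection $P$ is defined by $\Delta PW=\Delta W$ in $\Omega$, $PW=0$ on $\partial\Omega$ — the unweighted Laplacian, not $-\opdiv(a\nabla\cdot)$ as you wrote — precisely because only the Laplacian's Green function near $\partial\Omega$ admits the explicit mirror-image expansion of Lemma \ref{lem1} on which all $\epsilon$-expansions rest; the resulting $\nabla a\cdot\nabla PU_i$ terms are then estimated away in Lemma \ref{lem2}. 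Second, your cross term in $\Psi$ should be
\begin{equation*}
(d_1d_2)^{\frac{n-2}{2}}\left[\frac{1}{|t_1-t_2|^{n-2}}-\frac{1}{|t_1+t_2|^{n-2}}\right],
\end{equation*}
as in \eqref{psi} and \eqref{eq:11}: the interaction is of $U_1$ with $PU_2$, not with $U_2$, and $PU_2$ carries the boundary correction $-\alpha_n\delta_2^{(n-2)/2}H(\cdot,\xi_2)$, whose mirror image produces the $|t_1+t_2|^{-(n-2)}$ piece of the half-space Green function. Since this difference is still positive and bounded on compact subsets of $\{t_1,t_2>0,\ t_1\ne t_2\}$, your coercivity and stable-minimum argument is unaffected, so the proof goes through.
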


Theorem \ref{main4} states the existence of a sign changing solution whose two
blow-up points (one positive and one negative) collapse to the same point
$\xi_{0}$ of the boundary of $\Omega$ under the symmetry assumption $(a2)$.

Some interesting questions arise:

\begin{problem}
Is it possible to find sign changing solutions with $k\geq3$ blow-up points
with alternating sign which collapse to the point $\xi_{0}$?
\end{problem}

\begin{problem}
Is it possible to find a sign changing solution with one positive and one
negative blow-up point which collapse to the point $\xi_{0}$ in the more
general case when $\xi_{0}$ is a nondegenerate critical point of $a$
constrained to $\partial\Omega$ such that $\langle\nabla a(\xi_{0}),\nu
(\xi_{0})\rangle>0,$ without any symmetry assumption?
\end{problem}

The reason for including the symmetry assumption $(a2)$ in Theorem \ref{main4}
is that it allows to simplify the computations considerably (see Remark
\ref{rem}).

\medskip

In the following two theorems we assume we are given $k_{1},\ldots,k_{m}
\in\mathbb{N}$ with $k:=k_{1}+\cdots+k_{m}\leq N-3$ and a bounded smooth
domain $\Omega$ in $\mathbb{R}^{N-k}$ which satisfies (\ref{omega}). We set
$a(x_{1},\ldots,x_{N-k}):=x_{1}^{k_{1}}\cdots x_{m}^{k_{m}},$ $\ \mathcal{D}$
as in (\ref{D}), $p =2_{N,k}^{\ast}-\epsilon,$ $\Gamma:=O(k_{1}+1)\times
\cdots\times O(k_{m}+1)$ and
\begin{equation*}
\widetilde{U}_{\delta,\xi}(y^{1},\ldots,y^{m},z):=U_{\delta,\xi}(\left\vert
y^{1}\right\vert ,\ldots,\left\vert y^{m}\right\vert ,z)
\end{equation*}
for $\delta>0,\ \xi\in\mathbb{R}^{N-k}.$

\begin{theorem}
\label{main1}Assume that $(a1)$ holds true for $a$ and $\Omega$ as above. Then
there exists $\epsilon_{0}>0$ such that, for each $\lambda_{1},\ldots
,\lambda_{\kappa}\in\{0,1\}$ and $\epsilon\in(0,\epsilon_{0}),$ problem
\emph{(\ref{sup-eq})} has a $\Gamma$-invariant solution $v_{\epsilon}$ which
satisfies
\begin{equation*}
v_{\epsilon}(x)=\sum\limits_{i=1}^{\kappa}(-1)^{\lambda_{i}}\widetilde
{U}_{\delta_{i,\epsilon},\xi_{i,\epsilon}}(x)+o(1)\qquad\text{in }
D^{1,2}(\mathcal{D}),
\end{equation*}
with
\begin{equation*}
\epsilon^{-{\frac{n-1}{n-2}}}\delta_{i,\epsilon}\rightarrow d_{i}
>0\qquad\text{and}\qquad\xi_{i,\epsilon}\rightarrow\xi_{i}\in\partial\Omega,
\end{equation*}
for each $i=1,\dots,\kappa,$ as $\epsilon\rightarrow0.$
\end{theorem}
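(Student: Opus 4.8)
The plan is to obtain Theorem~\ref{main1} as an immediate consequence of Theorem~\ref{main3} by undoing the symmetry reduction recorded before problem~(\ref{p1}). First I would check that the weight $a(x_1,\dots,x_{N-k})=x_1^{k_1}\cdots x_m^{k_m}$ is admissible for~(\ref{p1}) in dimension $n=N-k$: by hypothesis~(\ref{omega}) one has $\overline{\Omega}\subset\{x_i>0,\ i=1,\dots,m\}$, so $a$ is of class $C^{\infty}$, hence $C^{2}(\overline{\Omega})$, and strictly positive on the compact set $\overline{\Omega}$. Since $(a1)$ is assumed precisely for this pair $(a,\Omega)$, Theorem~\ref{main3} applies and produces, for each choice $\lambda_1,\dots,\lambda_\kappa\in\{0,1\}$ and each $\epsilon\in(0,\epsilon_0)$, a solution $u_\epsilon$ of~(\ref{p1}) of the form $u_\epsilon=\sum_{i=1}^{\kappa}(-1)^{\lambda_i}U_{\delta_{i,\epsilon},\xi_{i,\epsilon}}+r_\epsilon$, with $\|r_\epsilon\|_{D^{1,2}(\Omega)}\to 0$, $\epsilon^{-(n-1)/(n-2)}\delta_{i,\epsilon}\to d_i>0$ and $\xi_{i,\epsilon}\to\xi_i\in\partial\Omega$.

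Next I would set $v_\epsilon(y^1,\dots,y^m,z):=u_\epsilon(|y^1|,\dots,|y^m|,z)$ as in~(\ref{inv}). By the elementary computation carried out before~(\ref{p1}), a $\Gamma$-invariant function of this form solves~(\ref{sup-eq}) on $\mathcal{D}$ if and only if $u_\epsilon$ solves~(\ref{p1}) on $\Omega$; hence $v_\epsilon$ is a $\Gamma$-invariant solution of~(\ref{sup-eq}). Moreover $\widetilde{U}_{\delta,\xi}$ is, by definition, exactly the $\Gamma$-invariant lift of $U_{\delta,\xi}$, so writing $\widetilde{r}_\epsilon$ for the lift of $r_\epsilon$ we obtain $v_\epsilon=\sum_{i=1}^{\kappa}(-1)^{\lambda_i}\widetilde{U}_{\delta_{i,\epsilon},\xi_{i,\epsilon}}+\widetilde{r}_\epsilon$ with the very same concentration parameters $\delta_{i,\epsilon}$ and centers $\xi_{i,\epsilon}$; thus the asserted rate $\epsilon^{-(n-1)/(n-2)}\delta_{i,\epsilon}\to d_i$ and the convergence $\xi_{i,\epsilon}\to\xi_i\in\partial\Omega$ are inherited at no cost.

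The only point that still requires an argument is that $\|\widetilde{r}_\epsilon\|_{D^{1,2}(\mathcal{D})}\to 0$. For this I would pass to polar coordinates $y^i=\rho_i\omega_i$, $\rho_i=|y^i|$, $\omega_i\in\mathbb{S}^{k_i}$, in each factor $\mathbb{R}^{k_i+1}$: for every $\Gamma$-invariant $w$ that is the lift of a function $\phi$ one gets the identity $\int_{\mathcal{D}}|\nabla w|^2=\Bigl(\prod_{i=1}^{m}|\mathbb{S}^{k_i}|\Bigr)\int_{\Omega}a(x)\,|\nabla\phi(x)|^2\,dx$ (valid with no boundary condition on $\phi$), whence $c_0\,\|\phi\|_{D^{1,2}(\Omega)}^2\le\|w\|_{D^{1,2}(\mathcal{D})}^2\le C_0\,\|\phi\|_{D^{1,2}(\Omega)}^2$ with $c_0,C_0>0$ depending only on $\min_{\overline{\Omega}}a$, $\max_{\overline{\Omega}}a$ and the $k_i$'s. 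Applying the upper bound with $w=\widetilde{r}_\epsilon$, $\phi=r_\epsilon$ yields $\|\widetilde{r}_\epsilon\|_{D^{1,2}(\mathcal{D})}\le\sqrt{C_0}\,\|r_\epsilon\|_{D^{1,2}(\Omega)}\to 0$, finishing the reduction. I do not expect any genuine obstacle here: the whole analytic difficulty — the finite-dimensional Lyapunov--Schmidt scheme and the analysis of the reduced energy — already lies in Theorem~\ref{main3}, and the only place where the geometric assumption~(\ref{omega}) is truly used is in keeping $a$ smooth and bounded away from $0$ and $\infty$ on $\overline{\Omega}$, which is exactly what makes the two $D^{1,2}$-norms equivalent and lets the remainder estimate transfer. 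Finally, that the positive and negative layers of $v_\epsilon$ concentrate along the $k$-dimensional submanifolds $M_i\subset\partial\mathcal{D}$, each diffeomorphic to $\mathbb{S}^{k_1}\times\cdots\times\mathbb{S}^{k_m}$, is read off directly from $\xi_{i,\epsilon}\to\xi_i\in\partial\Omega$ together with the definition~(\ref{D}) of $\mathcal{D}$.
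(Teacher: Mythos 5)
Your proposal is correct and takes essentially the same route as the paper: the paper explicitly states that Theorem~\ref{main1} ``follows immediately'' from Theorem~\ref{main3} via the symmetry reduction described in the introduction, which is exactly the argument you spell out. Your added verification that $a=x_1^{k_1}\cdots x_m^{k_m}$ is admissible on $\overline{\Omega}$ and that the Dirichlet norm of the $\Gamma$-invariant lift over $\mathcal{D}$ is, up to the constant $\prod_i|\mathbb{S}^{k_i}|$, the weighted Dirichlet norm over $\Omega$ (so that $\|\widetilde r_\epsilon\|_{D^{1,2}(\mathcal{D})}\to 0$) just fills in details the authors leave implicit.
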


\begin{theorem}
\label{main2}Assume that $(a2)$ holds true for $a$ and $\Omega$ as above. Then
there exists $\epsilon_{0}>0$ such that, for each $\epsilon\in(0,\epsilon
_{0}),$ problem \emph{(\ref{sup-eq})} has a $\Gamma$-invariant sign changing
solution $v_{\epsilon}$ which satisfies
\begin{equation*}
v_{\epsilon}(x)=\widetilde{U}_{\delta_{1,\epsilon},\xi_{1,\epsilon}
}(x)-\widetilde{U}_{\delta_{2,\epsilon},\xi_{2,\epsilon}}(x)+o(1)\qquad
\text{in }D^{1,2}(\mathcal{D}),
\end{equation*}
with
\begin{equation*}
\epsilon^{-{\frac{n-1}{n-2}}}\delta_{i,\epsilon}\rightarrow d_{i}>0,\quad
\xi_{i,\epsilon}=\xi_{0}+\epsilon t_{i,\epsilon}\nu(\xi_{0})\quad
\hbox{and}\quad t_{i,\epsilon}\rightarrow t_{i}>0,
\end{equation*}
for each $i=1,2,$ as $\epsilon\rightarrow0.$
\end{theorem}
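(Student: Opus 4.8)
The plan is to deduce Theorem \ref{main2} directly from Theorem \ref{main4} by the change of variables already set up in the introduction, namely the correspondence $(\ref{inv})$ between $\Gamma$-invariant functions $v$ on $\mathcal{D}$ and functions $u$ on $\Omega$, under which problem $(\ref{sup-eq})$ with $p=2_{N,k}^{\ast}-\epsilon$ becomes the weighted almost critical problem $(\ref{p1})$ with $a(x_{1},\dots,x_{N-k})=x_{1}^{k_{1}}\cdots x_{m}^{k_{m}}$ and $n=N-k$. The first step is to verify that this particular $a$ and the domain $\Omega$ satisfy hypothesis $(a2)$ at a suitable boundary point $\xi_{0}$; this is exactly what is needed to invoke Theorem \ref{main4}. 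Since $\Omega\subset\{x_{i}>0\}$, the weight $a$ is smooth and strictly positive on $\overline{\Omega}$, so $a\in\mathcal{C}^{2}(\overline{\Omega})$ as required. One chooses $\xi_{0}\in\partial\Omega$ to be a critical point of $a|_{\partial\Omega}$ with $\langle\nabla a(\xi_{0}),\nu(\xi_{0})\rangle>0$ (whose existence must be guaranteed by the hypothesis of the theorem as stated, i.e.\ we are assuming $(a2)$ holds for this $a$ and $\Omega$), together with the reflections $\varrho_{1},\dots,\varrho_{n-1}$ in the statement.

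The second step is the translation of conclusions. Applying Theorem \ref{main4} produces, for each $\epsilon\in(0,\epsilon_{0})$, a sign changing solution $u_{\epsilon}$ of $(\ref{p1})$, invariant under each $\varrho_{i}$, with
\begin{equation*}
u_{\epsilon}(x)=U_{\delta_{1,\epsilon},\xi_{1,\epsilon}}(x)-U_{\delta_{2,\epsilon},\xi_{2,\epsilon}}(x)+o(1)\qquad\text{in }D^{1,2}(\Omega),
\end{equation*}
with $\epsilon^{-(n-1)/(n-2)}\delta_{i,\epsilon}\to d_{i}>0$, $\xi_{i,\epsilon}=\xi_{0}+\epsilon t_{i,\epsilon}\nu(\xi_{0})$, and $t_{i,\epsilon}\to t_{i}>0$. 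Setting $v_{\epsilon}(y^{1},\dots,y^{m},z):=u_{\epsilon}(|y^{1}|,\dots,|y^{m}|,z)$ gives a $\Gamma$-invariant solution of $(\ref{sup-eq})$. Since $v_{\epsilon}$ changes sign exactly because $u_{\epsilon}$ does, the sign changing property transfers. It then remains to rewrite the asymptotic expansion in $D^{1,2}(\mathcal{D})$: using $\widetilde{U}_{\delta,\xi}(y^{1},\dots,y^{m},z)=U_{\delta,\xi}(|y^{1}|,\dots,|y^{m}|,z)$, the main terms become $\widetilde{U}_{\delta_{1,\epsilon},\xi_{1,\epsilon}}-\widetilde{U}_{\delta_{2,\epsilon},\xi_{2,\epsilon}}$, and one checks that the lift operation $u\mapsto v$ is (up to a positive constant factor coming from integrating out the sphere directions) an isometry from the $\Gamma$-invariant, $a$-weighted $D^{1,2}(\Omega)$-norm to the $D^{1,2}(\mathcal{D})$-norm, so the $o(1)$ remainder in $D^{1,2}(\Omega)$ is carried to an $o(1)$ remainder in $D^{1,2}(\mathcal{D})$. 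The limiting relations for $\delta_{i,\epsilon}$, $\xi_{i,\epsilon}$, $t_{i,\epsilon}$ are unchanged under the lift, completing the proof.

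The only genuine content beyond bookkeeping is the routine verification in the first step that the weighted operator $-\operatorname{div}(a\nabla\cdot)$ on $\Omega$ corresponds to $-\Delta$ on $\mathcal{D}$ under $(\ref{inv})$ — which is the ``simple calculation'' already cited in the introduction — and the equivariance check showing that the $\varrho_{i}$-invariance of $u_{\epsilon}$, combined with the $\Gamma$-invariance built into the lift, yields a bona fide solution on $\mathcal{D}$; the main point to be careful about is that $\mathcal{D}$ is genuinely smooth (this uses $\overline{\Omega}\subset\{x_{i}>0\}$, so the spherical directions never degenerate) and that the isometry between function spaces has the right weight, so that weak solutions correspond to weak solutions. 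No new estimates are needed: Theorems \ref{main1} and \ref{main2} are simply the equivariant reformulations of Theorems \ref{main3} and \ref{main4}, respectively.
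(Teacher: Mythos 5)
Your proposal is correct and follows the same route as the paper: the paper explicitly disposes of Theorem~\ref{main2} with the single remark ``By the previous discussion Theorems~\ref{main1} and~\ref{main2} follow immediately from Theorems~\ref{main3} and~\ref{main4},'' and your argument just spells out the bookkeeping behind that remark — the change of variables~(\ref{inv}) reducing $(\ref{sup-eq})$ to $(\ref{p1})$ with $a(x)=x_1^{k_1}\cdots x_m^{k_m}$, the transfer of the $\varrho_i$-invariant solution $u_\epsilon$ from Theorem~\ref{main4} to the $\Gamma$-invariant $v_\epsilon$ on $\mathcal{D}$, and the compatibility of the weighted $D^{1,2}(\Omega)$-norm with the $D^{1,2}(\mathcal{D})$-norm under the lift.
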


By the previous discussion Theorems \ref{main1} and \ref{main2} follow
immediately from Theorems \ref{main3} and \ref{main4}. The proof of Theorems
\ref{main3} and \ref{main4} relies on a very well known Ljapunov-Schmidt
reduction procedure. We shall omit many details on this procedure because they
can be found, up to some minor modifications, in the literature. We only
compute what cannot be deduced from known results.

The outline of the paper is as follows: In Section \ref{var-set} we write the
approximate solution, sketch the Ljapunov-Schmidt procedure and use it to
prove Theorems \ref{main3} and \ref{main4}. In Appendix B we compute the rate
of the error term and in Appendix C we estimate the reduced energy. In
Appendix A we give some important estimates on the Green function close to the boundary.

\section{The variational setting}

\label{var-set}We take
\begin{equation*}
(u,v):=\int_{\Omega}a(x)\nabla u\cdot\nabla v\text{ }dx,\text{\qquad
}\left\Vert u\right\Vert :=\left(  \int_{\Omega}a(x)\left\vert \nabla
u\right\vert ^{2}dx\right)  ^{1/2},
\end{equation*}
as the inner product in $\mathrm{H}_{0}^{1}(\Omega)$ and its corresponding
norm. Since $a$ is strictly positive and bounded in $\overline{\Omega}$ they
are well defined and equivalent to the standard ones. Similarly, for each
$r\in\lbrack1,\infty)$,
\begin{equation*}
\left\Vert u\right\Vert _{r}:=\left(  \int_{\Omega}a(x)\left\vert u\right\vert
^{r}dx\right)  ^{1/r}
\end{equation*}
is a norm in $\mathrm{L}^{r}(\Omega)$ which is equivalent to the standard one.

Next, we rewrite problem (\ref{p1}) in a different way. Let $i^{\ast
}:\mathrm{L}^{\frac{2n}{n+2}}(\Omega)\rightarrow\mathrm{H}_{0}^{1}(\Omega)$ be
the adjoint operator to the embedding $i:\mathrm{H}_{0}^{1}(\Omega
)\hookrightarrow\mathrm{L}^{\frac{2n}{n-2}}(\Omega),$ i.e. $i^{\ast}(u)=v$ if
and only if
\begin{equation*}
(v,\varphi)=\int_{\Omega}a(x)u(x)\varphi(x)dx\quad\text{for all }\varphi\in
C_{c}^{\infty}(\Omega)
\end{equation*}
if and only if
\begin{equation*}
-\text{ div}(a(x)\nabla v)=a(x)u\quad\text{in}\ \Omega,\qquad v=0\quad
\text{on}\ \partial\Omega.
\end{equation*}
Clearly, there exists a positive constant $c$ such that
\begin{equation*}
\left\Vert i^{\ast}(u)\right\Vert \leq c\left\Vert u\right\Vert _{\frac
{2n}{n+2}}\quad\forall\ u\in\mathrm{L}^{\frac{2n}{n+2}}(\Omega).
\end{equation*}
Setting $p :=\frac{2n}{n-2}$ and $f_{\epsilon}(s):=\left\vert s\right\vert ^{p
-2-\epsilon}s$, problem (\ref{p1}) turns out to be equivalent to
\begin{equation}
u=i^{\ast}\left(  f_{\epsilon}(u)\right)  ,\qquad u\in\mathrm{H}_{0}
^{1}(\Omega). \label{p2}
\end{equation}

Set $f(s):=f_{0}(s)$ and $\alpha_{n}:=\left[  n(n-2)\right]  ^{\frac{n-2}{4}
}.$ Let
\begin{equation*}
U_{\delta,\xi}:=\alpha_{n}{\frac{\delta^{\frac{n-2}{2}}}{(\delta^{2}
+|x-\xi|^{2})^{\frac{n-2}{2}}},}\qquad\delta>0,\text{\quad}\xi\in
\mathbb{R}^{n},
\end{equation*}
be the positive solutions to the limit problem
\begin{equation*}
-\Delta u=f(u),\qquad u\in H^{1}(\mathbb{R}^{n}).
\end{equation*}
Set
\begin{equation*}
\psi_{\delta,\xi}^{0}(x):={\frac{\partial U_{\delta,\xi}}{\partial\delta}
}=\alpha_{n}{\frac{n-2}{2}}\delta^{\frac{n-4}{2}}{\frac{|x-\xi|^{2}-\delta
^{2}}{(\delta^{2}+|x-\xi|^{2})^{n/2}}}
\end{equation*}
and, for each $j=1,\dots,n,$
\begin{equation*}
\psi_{\delta,\xi}^{j}(x):={\frac{\partial U_{\delta,\xi}}{\partial\xi_{j}}
}=\alpha_{n}(n-2)\delta^{\frac{n-2}{2}}{\frac{x_{j}-\xi_{j}}{(\delta
^{2}+|x-\xi|^{2})^{n/2}}}.
\end{equation*}
Recall that the space spanned by the $(n+1)$ functions $\psi_{\delta,\xi}^{j}
$ is the set of solutions to the linearized problem
\begin{equation*}
-\Delta\psi=(p -1)U_{\delta,\xi}^{p -2}\psi\text{\qquad in\ }\mathbb{R}^{n}.
\end{equation*}
Let $PW$ denote the projection of the function $W\in D^{1,2}(\mathbb{R}^{n})$
onto $\mathrm{H}_{0}^{1}(\Omega)$, i.e.
\begin{equation*}
\Delta PW=\Delta W\ \text{\ in}\ \Omega,\qquad PW=0\ \text{\ on}
\ \partial\Omega.
\end{equation*}

We look for two different types of solutions to problem (\ref{p1}). The
solutions found in Theorem \ref{main3} are of the form
\begin{equation}
u_{\epsilon}=\sum\limits_{i=1}^{\kappa}(-1)^{\lambda_{i}}PU_{\delta
_{i,\epsilon},\xi_{i,\epsilon}}+\phi, \label{ans1}
\end{equation}
for fixed $\lambda_{i}\in\{0,1\},$ where the concentration parameters satisfy
\begin{equation}
\delta_{i,\epsilon}=\epsilon^{\frac{n-1}{n-2}}d_{i}\quad\text{for some}\quad
d_{i}>0, \label{par}
\end{equation}
and the concentration points satisfy
\begin{equation}
\xi_{i,\epsilon}=s_{i}+\eta_{i}\nu(s_{i})\ \hbox{ where}\ s_{i}\in
\partial\Omega\ \hbox{and}\ \eta_{i}=\epsilon t_{i}\ \hbox{for some}\ t_{i}>0.
\label{pt1}
\end{equation}
Here and in the following $\nu(s_{i})$ denotes the inward unit normal to the
boundary $\partial\Omega$ at the point $s_{i}$.

On the other hand, the solutions found in Theorem \ref{main4} are of the form
\begin{equation}
u_{\epsilon}=\sum\limits_{i=1}^{\ell}(-1)^{i+1}PU_{\delta_{i,\epsilon},
\xi_{i,\epsilon}}+\phi, \label{ans2}
\end{equation}
where the concentration parameters satisfy (\ref{par}), while the
concentration points are aligned on the line $\mathfrak{L}:=\{\xi_{0}+r\nu
(\xi_{0}):r\in\mathbb{R}\},$ namely
\begin{equation}
\xi_{i,\epsilon}=\xi_{0}+\eta_{i}\nu(\xi_{0})\qquad\hbox{ where}\ \eta
_{i}=\epsilon t_{i}\ \hbox{for some}\ 0<t_{1}<\dots<t_{\ell}. \label{pt2}
\end{equation}

Next, we introduce the configuration space $\Lambda$ where concentration
parameters and concentration points lie. For solutions of type (\ref{ans1}) we
set $\mathbf{s}=(s_{1},\dots,s_{\kappa})\in(\partial\Omega)^{\kappa},$
$\mathbf{d}=(d_{1},\dots,d_{\kappa})\in(0,+\infty)^{\kappa},$ and
$\mathbf{t}=(t_{1},\dots,t_{\kappa})\in(0,+\infty)^{\kappa},$ and so
\begin{equation*}
\Lambda:=\left\{  (\mathbf{s},\mathbf{d},\mathbf{t})\in(\partial
\Omega)^{\kappa}\times(0,+\infty)^{\kappa}\times(0,+\infty)^{\kappa}
\ :\ s_{i}\not =s_{j}\ \hbox{if}\ i\not =j\right\}  ,
\end{equation*}
while for solutions of type (\ref{ans2}), we fix $\mathbf{s}=(\xi_{0}
,\dots,\xi_{0})$ and we set $\mathbf{d}=(d_{1},\dots,d_{\ell})\in
(0,+\infty)^{\ell},$ and $\mathbf{t}=(t_{1},\dots,t_{\ell})\in(0,+\infty
)^{\ell},$ and so
\begin{equation*}
\Lambda:=\left\{  (\mathbf{d},\mathbf{t})\in(0,+\infty)^{\ell}\times
(0,+\infty)^{\ell}\ :\ t_{1}<\dots<t_{\ell}\right\}  .
\end{equation*}
In each of these cases we write
\begin{equation*}
V_{\mathbf{s},\mathbf{d},\mathbf{t}}:=\sum\limits_{i=1}^{\kappa}
(-1)^{\lambda_{i}}PU_{\delta_{i},\xi_{i}}\text{\qquad and\qquad}
V_{\mathbf{s},\mathbf{d},\mathbf{t}}=V_{\mathbf{d},\mathbf{t}}:=\sum
\limits_{i=1}^{\ell}(-1)^{i+1}PU_{\delta_{i},\xi_{i}}
\end{equation*}
respectively.

The rest term $\phi$ belongs to a suitable space which we now define. For
simplicity we write $\psi_{i}^{j}:=\psi_{\delta_{i,\epsilon},\xi_{i,\epsilon}
}^{j}$ with $\delta_{i,\epsilon}$ as in \eqref{par} and $\xi_{i,\epsilon}$ as
in \eqref{pt1} or \eqref{pt2}.

For solutions of type \eqref{ans1} we introduce the spaces
\begin{equation*}
K_{\mathbf{s},\mathbf{d},\mathbf{t}}:=\mathrm{span}\{P\psi_{i}^{j}
\ :i=1,\dots,\kappa,\ j=0,1,\dots,n\},
\end{equation*}
\begin{equation*}
K_{\mathbf{s},\mathbf{d},\mathbf{t}}^{\perp}:=\left\{  \phi\in\mathrm{H}
_{0}^{1}(\Omega):(\phi,P\psi_{i}^{j})=0,\ i=1,\dots,\kappa,\ j=0,1,\dots
,n\right\}  .
\end{equation*}

Note that for $\xi_{i,\epsilon}$ as in \eqref{pt2} the functions $P\psi
_{i}^{j}$ are invariant with respect to the reflections $\varrho_{i}$ given in
$(a2).$ So for solutions of type \eqref{ans2} we define the space
$K_{\mathbf{s},\mathbf{d},\mathbf{t}}$ as above and $K_{\mathbf{s}
,\mathbf{d},\mathbf{t}}^{\perp}$ as the orthogonal complement of
$K_{\mathbf{s},\mathbf{d},\mathbf{t}}$ in the subspace of all functions in
$\mathrm{H}_{0}^{1}(\Omega)$ which are invariant with respect to $\varrho
_{1},\ldots,\varrho_{n-1}.$ Then we introduce the orthogonal projection
operators $\Pi_{\mathbf{s},\mathbf{d},\mathbf{t}}$ and $\Pi_{\mathbf{s}
,\mathbf{d},\mathbf{t}}^{\perp}$ in $H^{1}_{0}(\Omega)$ with ranges
$K_{\mathbf{s},\mathbf{d},\mathbf{t}}$ and $K_{\mathbf{s},\mathbf{d}
,\mathbf{t}}^{\bot}$, respectively.

As usual, our approach to solve problem \eqref{p2} will be to find a
$(\mathbf{s},\mathbf{d},\mathbf{t})\in\Lambda$ and a function $\phi\in
K_{\mathbf{s},\mathbf{d},\mathbf{t}}^{\perp}$ such that
\begin{equation}
\Pi_{\mathbf{s},\mathbf{d},\mathbf{t}}^{\perp}\left(  V_{\mathbf{s}
,\mathbf{d},\mathbf{t}}+\phi-i^{\ast}\left[  f_{\epsilon}(V_{\mathbf{s}
,\mathbf{d},\mathbf{t}}+\phi)\right]  \right)  =0 \label{es1}
\end{equation}
and
\begin{equation}
\Pi_{\mathbf{s},\mathbf{d},\mathbf{t}}\left(  V_{\mathbf{s},\mathbf{d}
,\mathbf{t}}+\phi-i^{\ast}\left[  f_{\epsilon}(V_{\mathbf{s},\mathbf{d}
,\mathbf{t}}+\phi)\right]  \right)  =0. \label{es2}
\end{equation}

First we shall find, for each $(\mathbf{s},\mathbf{d},\mathbf{t})\in\Lambda$
and small $\epsilon,$ a function $\phi\in K_{\mathbf{s},\mathbf{d},\mathbf{t}
}^{\perp}$ such that \eqref{es1} holds. To this aim we define a linear
operator $L_{\mathbf{s},\mathbf{d},\mathbf{t}}:K_{\mathbf{s},\mathbf{d}
,\mathbf{t}}^{\perp}\rightarrow K_{\mathbf{s},\mathbf{d},\mathbf{t}}^{\perp}$
by
\begin{equation*}
L_{\mathbf{s},\mathbf{d},\mathbf{t}}\phi:=\phi-\Pi_{\mathbf{s},\mathbf{d}
,\mathbf{t}}^{\perp}i^{\ast}\left[  f^{\prime}(V_{\mathbf{s},\mathbf{d}
,\mathbf{t}})\phi\right]  .
\end{equation*}
The following statement holds true.

\begin{proposition}
\label{pro1} For any compact subset $\mathbf{C}$ of $\Lambda$ there exist
$\epsilon_{0}>0$ and $c>0$ such that for each $\epsilon\in(0,\epsilon_{0}) $
and $(\mathbf{s},\mathbf{d},\mathbf{t})\in\mathbf{C}$ the operator
$L_{\mathbf{s},\mathbf{d},\mathbf{t}}$ is invertible and
\begin{equation*}
\left\Vert L_{\mathbf{s},\mathbf{d},\mathbf{t}}\phi\right\Vert \geq
c\left\Vert \phi\right\Vert \ \quad\ \forall\ \phi\in K_{\mathbf{s}
,\mathbf{d},\mathbf{t}}^{\perp}.
\end{equation*}

\end{proposition}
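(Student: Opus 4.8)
The plan is to argue by contradiction in the standard way for linear theory in Ljapunov--Schmidt reductions. Suppose the estimate fails: then there exist sequences $\epsilon_k\to 0$, points $(\mathbf{s}_k,\mathbf{d}_k,\mathbf{t}_k)\in\mathbf{C}$, and functions $\phi_k\in K_{\mathbf{s}_k,\mathbf{d}_k,\mathbf{t}_k}^{\perp}$ with $\|\phi_k\|=1$ and $\|L_{\mathbf{s}_k,\mathbf{d}_k,\mathbf{t}_k}\phi_k\|\to 0$. Since $\mathbf{C}$ is compact, after passing to a subsequence we may assume $(\mathbf{s}_k,\mathbf{d}_k,\mathbf{t}_k)\to(\mathbf{s},\mathbf{d},\mathbf{t})\in\mathbf{C}$. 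Writing $w_k:=L_{\mathbf{s}_k,\mathbf{d}_k,\mathbf{t}_k}\phi_k = \phi_k-\Pi_{\mathbf{s}_k,\mathbf{d}_k,\mathbf{t}_k}^{\perp}i^{\ast}[f'(V_{\mathbf{s}_k,\mathbf{d}_k,\mathbf{t}_k})\phi_k]$, the goal is to reach a contradiction by showing $\phi_k\to 0$ strongly in $\mathrm{H}_0^1(\Omega)$.

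The heart of the argument is a concentration analysis of the term $i^{\ast}[f'(V_k)\phi_k]$, where I abbreviate $V_k:=V_{\mathbf{s}_k,\mathbf{d}_k,\mathbf{t}_k}$. Because $f'(s)=(p-1)|s|^{p-2}$ and the projected bubbles $PU_{\delta_{i,k},\xi_{i,k}}$ concentrate at boundary points $\xi_{i,k}\to\xi_i\in\partial\Omega$ with $\delta_{i,k}\sim\epsilon_k^{(n-1)/(n-2)}d_i\to 0$, the potential $f'(V_k)$ concentrates near the $\kappa$ (respectively $\ell$) points $\xi_i$. For each $i$ I rescale around $\xi_{i,k}$: set $\widetilde\phi_{i,k}(y):=\delta_{i,k}^{(n-2)/2}\phi_k(\delta_{i,k}y+\xi_{i,k})$, which is bounded in $D^{1,2}(\mathbb{R}^n)$ (or in $D^{1,2}$ of a half-space, once one accounts for the fact that $\xi_{i,k}$ lies within $O(\epsilon_k)=o(\delta_{i,k})$ of $\partial\Omega$ — in fact $\eta_i=\epsilon_k t_i$ and $\delta_{i,k}\sim\epsilon_k^{(n-1)/(n-2)}$, so $\eta_i/\delta_{i,k}\to 0$ and the rescaled domain exhausts all of $\mathbb{R}^n$). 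Passing to a weak limit $\widetilde\phi_i$, standard arguments (elliptic estimates away from concentration, together with the equation $w_k\to 0$) show that $\widetilde\phi_i$ solves the limiting linearized equation $-\Delta\widetilde\phi_i=(p-1)U_{1,0}^{p-2}\widetilde\phi_i$ on $\mathbb{R}^n$, hence lies in $\mathrm{span}\{\psi_{1,0}^0,\psi_{1,0}^1,\dots,\psi_{1,0}^n\}$ by nondegeneracy of the bubble. But the orthogonality conditions $(\phi_k,P\psi_{i}^{j})=0$ pass to the limit (using that $P\psi_i^j$ rescales to $\psi_{1,0}^j$ up to vanishing error, cf. the estimates in Appendix A on the Green function near the boundary) and force $\widetilde\phi_i=0$ for every $i$ and every component. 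In the setting of \eqref{ans2} one must additionally note that the reflection invariance is preserved in the limit, which is automatic since the $\xi_{i,k}$ lie on $\mathfrak{L}$.

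Having established that $\phi_k\rightharpoonup 0$ and that no mass concentrates at any $\xi_i$, I then upgrade to strong convergence. Testing the identity $\phi_k = w_k + \Pi_{\mathbf{s}_k,\mathbf{d}_k,\mathbf{t}_k}^{\perp}i^{\ast}[f'(V_k)\phi_k]$ with $\phi_k$ itself gives
\begin{equation*}
1=\|\phi_k\|^2 = (w_k,\phi_k) + \int_{\Omega}a(x)f'(V_k)\phi_k^2\,dx,
\end{equation*}
and the first term tends to $0$ since $\|w_k\|\to 0$. For the second term I split $\Omega$ into small balls $B(\xi_i,r)$ and the complement; on the complement $f'(V_k)\to 0$ uniformly while $\phi_k$ is bounded in $L^{2^*}$, and on each ball the change of variables reduces the integral to $\int (p-1)U_{1,0}^{p-2}\widetilde\phi_{i,k}^2$, which vanishes in the limit by the vanishing of $\widetilde\phi_i$ together with the vanishing of $\phi_k$ in the $L^2_{\mathrm{loc}}$-sense after rescaling (one must take $r$ small, then $k$ large, then let the cutoff parameters go — a routine diagonal argument). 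This contradicts $\|\phi_k\|=1$, proving invertibility and the lower bound; the lower bound $c$ is then uniform over $\mathbf{C}$ precisely because we argued against an arbitrary sequence in $\mathbf{C}$.

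The main obstacle — and the reason this is not entirely a black box — is the boundary effect: the bubbles concentrate \emph{on} $\partial\Omega$ (not in the interior), so the projected functions $PU_{\delta,\xi}$ and $P\psi_\delta^j$ differ from the corresponding whole-space objects by terms governed by the Green function of $\mathrm{div}(a\nabla\cdot)$ near the boundary. One must verify that, at the relevant scales $\delta\sim\epsilon^{(n-1)/(n-2)}$ and $\eta\sim\epsilon$, these correction terms are genuinely lower order in the rescaled coordinates, so that the rescaled limit is still the clean half-space (in fact full-space, since $\eta/\delta\to 0$) linearized problem. These are exactly the Green-function estimates collected in Appendix A, and invoking them is what makes the concentration analysis go through; everything else is a verbatim repetition of the linear theory in, e.g., \cite{MR2734352} and the references therein.
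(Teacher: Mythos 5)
Your overall strategy — contradiction, rescaling around each concentration point, identifying the limit with the full-space linearized problem, using nondegeneracy and the orthogonality constraints to force the limit to vanish, then upgrading to strong convergence — is exactly the standard argument, and is the one the paper invokes by citing Lemma~1.7 of \cite{MR1911045}. In that sense your plan is on the right track.

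However, there is a concrete arithmetic error at the step you yourself flag as the crux. You assert that ``$\xi_{i,k}$ lies within $O(\epsilon_k)=o(\delta_{i,k})$ of $\partial\Omega$'' and that ``$\eta_i/\delta_{i,k}\to 0$.'' Both are false: since $\frac{n-1}{n-2}>1$, we have $\delta_{i,\epsilon}=d_i\,\epsilon^{\frac{n-1}{n-2}}=o(\epsilon)$, not the other way around, and consequently
\begin{equation*}
\frac{\eta_i}{\delta_{i,\epsilon}}=\frac{t_i}{d_i}\,\epsilon^{1-\frac{n-1}{n-2}}=\frac{t_i}{d_i}\,\epsilon^{-\frac{1}{n-2}}\longrightarrow\infty.
\end{equation*}
This matters: had your stated ratio $\eta/\delta\to 0$ been correct, the rescaled domains $(\Omega-\xi_{i,k})/\delta_{i,k}$ would collapse to a half-space with the bubble center \emph{on} its boundary, and the limit equation would not be the clean $\mathbb{R}^n$ linearized problem — the nondegeneracy/kernel argument would then not apply directly. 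The conclusion you want (rescaled domains exhaust $\mathbb{R}^n$) is in fact true, but precisely because $\eta_i/\delta_{i,\epsilon}\to\infty$, i.e.\ the concentration scale $\delta$ is much finer than the distance $\eta\sim\epsilon$ to the boundary. You should fix the sign of the exponent and the direction of the comparison; once this is corrected, the rest of your sketch is sound. One further minor omission: to pass from the a~priori estimate $\|L\phi\|\ge c\|\phi\|$ (which gives injectivity) to invertibility you should note that $\phi\mapsto\Pi^\perp i^*[f'(V)\phi]$ is compact on $K^\perp$, so $L=\mathrm{Id}-\text{compact}$ and the Fredholm alternative applies.
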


\begin{proof}
We argue as in Lemma 1.7 of \cite{MR1911045}.
\end{proof}

Now we are in position to solve equation \eqref{es1}.

\begin{proposition}
\label{pro2} For any compact subset $\mathbf{C}$ of $\Lambda$ there exist
$\epsilon_{0},c,\sigma>0$ such that for each $\epsilon\in
(0,\epsilon_{0}) $ and $(\mathbf{s},\mathbf{d},\mathbf{t})\in\mathbf{C}$ there
exists a unique $\phi_{\mathbf{s},\mathbf{d},\mathbf{t}}^{\epsilon}\in
K_{\mathbf{s},\mathbf{d},\mathbf{t}}^{\perp}$ such that \eqref{es1} holds and
\begin{equation}
\left\Vert \phi_{\mathbf{s},\mathbf{d},\mathbf{t}}^{\epsilon}\right\Vert \leq
c\epsilon^{{\frac{1}{2}}+\sigma} . \label{eq2-pro2}
\end{equation}

\end{proposition}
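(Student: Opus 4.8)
The plan is to solve equation \eqref{es1} by a standard contraction mapping argument, using the invertibility of $L_{\mathbf{s},\mathbf{d},\mathbf{t}}$ from Proposition \ref{pro1}. First I would rewrite \eqref{es1} as a fixed point problem. Writing $N_\epsilon(\phi):=\Pi_{\mathbf{s},\mathbf{d},\mathbf{t}}^{\perp}i^{\ast}\bigl[f_\epsilon(V_{\mathbf{s},\mathbf{d},\mathbf{t}}+\phi)-f(V_{\mathbf{s},\mathbf{d},\mathbf{t}})-f'(V_{\mathbf{s},\mathbf{d},\mathbf{t}})\phi\bigr]$ for the nonlinear remainder and $R_\epsilon:=\Pi_{\mathbf{s},\mathbf{d},\mathbf{t}}^{\perp}\bigl(i^{\ast}[f(V_{\mathbf{s},\mathbf{d},\mathbf{t}})]-V_{\mathbf{s},\mathbf{d},\mathbf{t}}\bigr)$ for the error term, equation \eqref{es1} becomes $L_{\mathbf{s},\mathbf{d},\mathbf{t}}\phi = N_\epsilon(\phi)+R_\epsilon$, i.e. $\phi = \mathcal{T}_\epsilon(\phi):= L_{\mathbf{s},\mathbf{d},\mathbf{t}}^{-1}\bigl(N_\epsilon(\phi)+R_\epsilon\bigr)$. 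By Proposition \ref{pro1}, $\|L_{\mathbf{s},\mathbf{d},\mathbf{t}}^{-1}\|\le 1/c$ uniformly on $\mathbf{C}$, so it suffices to show $\mathcal{T}_\epsilon$ is a contraction on a ball $B_{c'\epsilon^{1/2+\sigma}}$ in $K_{\mathbf{s},\mathbf{d},\mathbf{t}}^{\perp}$.

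The two ingredients needed are: (i) an estimate on the error term $\|R_\epsilon\|\le C\epsilon^{1/2+\sigma}$, uniformly for $(\mathbf{s},\mathbf{d},\mathbf{t})\in\mathbf{C}$, which is precisely what is established in Appendix B of the paper (the computation of the rate of the error term); and (ii) estimates on the nonlinear term showing $\|N_\epsilon(\phi)\|\le C\bigl(\|\phi\|^{\min(2,p-1)}+\epsilon^{\gamma}\|\phi\|\bigr)$ and the corresponding Lipschitz bound $\|N_\epsilon(\phi_1)-N_\epsilon(\phi_2)\|\le C\bigl(\|\phi_1\|^{\min(1,p-2)}+\|\phi_2\|^{\min(1,p-2)}+\epsilon^\gamma\bigr)\|\phi_1-\phi_2\|$ on the ball. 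These follow from the continuity of $i^\ast\colon \mathrm{L}^{2n/(n+2)}\to\mathrm{H}_0^1$, Hölder's inequality, and the elementary pointwise inequalities for $f_\epsilon$ (splitting into the subcritical-exponent correction $f_\epsilon-f$, which contributes the $\epsilon^\gamma$ terms via $|s|^{p-2-\epsilon}-|s|^{p-2}=O(\epsilon|s|^{p-2}|\log|s||)$, and the purely nonlinear part $f(V+\phi)-f(V)-f'(V)\phi$). Once these are in hand, for $\epsilon$ small the map $\mathcal{T}_\epsilon$ sends the ball of radius $c'\epsilon^{1/2+\sigma}$ into itself and contracts it, yielding a unique fixed point $\phi_{\mathbf{s},\mathbf{d},\mathbf{t}}^{\epsilon}$ satisfying \eqref{eq2-pro2}.

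The main obstacle — and the only part that genuinely requires the specific structure of this problem rather than being a verbatim transcription of known Ljapunov–Schmidt arguments — is establishing the error estimate (i), namely that $\|R_\epsilon\|=O(\epsilon^{1/2+\sigma})$ with the right exponent under the scaling $\delta_{i,\epsilon}=\epsilon^{(n-1)/(n-2)}d_i$ and boundary concentration \eqref{pt1}–\eqref{pt2}. This is where the projections $PU_{\delta,\xi}$ onto $\mathrm{H}_0^1(\Omega)$ differ from the entire-space bubbles $U_{\delta,\xi}$, so one must control $\varphi_{\delta,\xi}:=U_{\delta,\xi}-PU_{\delta,\xi}$ near $\partial\Omega$ using the Green function estimates of Appendix A; the interplay between the $\epsilon$ appearing in the exponent $f_\epsilon$ versus $f$ and the $\epsilon$-dependence of $\delta$ and of the distance $\eta_i=\epsilon t_i$ to the boundary is what produces the precise power $\epsilon^{1/2+\sigma}$. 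I would carry out this computation by splitting $\Omega$ into regions near and far from the concentration points, estimating $\|f(V_{\mathbf{s},\mathbf{d},\mathbf{t}})-\sum_i(-1)^{\lambda_i}f(U_{\delta_i,\xi_i})\|_{2n/(n+2)}$ (cross terms, negligible because bubbles are well-separated for type \eqref{ans1} or collapse at controlled rate for type \eqref{ans2}), $\|f(U_{\delta_i,\xi_i})\|$ over the part of the bubble outside $\Omega$, and $\|f_\epsilon(V)-f(V)\|$ (the exponent-perturbation term, of order $\epsilon^{1/2}|\log\epsilon|$-ish after the rescaling), then taking $\sigma$ small enough. The rest — Proposition \ref{pro1} itself and the nonlinear estimates (ii) — is cited from or modeled on \cite{MR1911045}, so I would state those steps briefly and refer to the literature, exactly as the authors announce they will do.
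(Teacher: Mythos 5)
Your proposal follows the paper's own route essentially verbatim: the paper proves Proposition~\ref{pro2} by first establishing $\norm{R_{\si,\di,\ti}}=O(\epsilon^{1/2+\sigma})$ in Appendix~B (Lemma~\ref{re1}, which is exactly your ingredient (i) and uses the Green-function estimates of Appendix~A in the way you describe), and then invoking the standard contraction-mapping argument "exactly as in Proposition~2.3 of \cite{MR2232205}," which is your ingredient (ii) together with the fixed-point iteration. One minor bookkeeping slip: the paper defines its error term with $f_\epsilon$, namely $R_{\si,\di,\ti}=\Pi^\perp_{\si,\di,\ti}(V_{\si,\di,\ti}-i^*[f_\epsilon(V_{\si,\di,\ti})])$, whereas your $R_\epsilon$ uses $f$, so your nonlinear remainder satisfies $N_\epsilon(0)=\Pi^\perp i^*[f_\epsilon(V)-f(V)]\neq0$ and your stated bound $\norm{N_\epsilon(\phi)}\le C(\norm{\phi}^{\min(2,p-1)}+\epsilon^\gamma\norm{\phi})$ is missing the corresponding $O(\epsilon\abs{\log\epsilon})$ constant term --- harmless, since that quantity is itself $O(\epsilon^{1/2+\sigma})$, but worth correcting for consistency.
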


\begin{proof}
We estimate the rate of the error term
\begin{equation}
R_{\mathbf{s},\mathbf{d},\mathbf{t}}:=\Pi_{\mathbf{s},\mathbf{d},\mathbf{t}
}^{\perp}\left(  V_{\mathbf{s},\mathbf{d},\mathbf{t}}-i^{\ast}\left[
f_{\epsilon}(V_{\mathbf{s},\mathbf{d},\mathbf{t}})\right]  \right)  \label{re}
\end{equation}
in Appendix B. Then we argue exactly as in Proposition 2.3 of \cite{MR2232205}.
\end{proof}

The critical points of the energy functional $J_{\epsilon}:\mathrm{H}_{0}
^{1}(\Omega)\rightarrow\mathbb{R}$ defined by
\begin{equation*}
J_{\epsilon}(u):={\frac{1}{2}}\int\limits_{\Omega}a(x)|\nabla u|^{2}
dx-{\frac{1}{p -\epsilon}}\int\limits_{\Omega}a(x)|u|^{p -\epsilon}dx
\end{equation*}
are the solutions to problem \eqref{p1}. We define the reduced energy
functional $\widetilde{J}_{\epsilon}:\Lambda\rightarrow\mathbb{R}$ by
\begin{equation*}
\widetilde{J}_{\epsilon}(\mathbf{s},\mathbf{d},\mathbf{t}):=J_{\epsilon
}(V_{\mathbf{s},\mathbf{d},\mathbf{t}}+\phi_{\mathbf{s},\mathbf{d},\mathbf{t}
}^{\epsilon})
\end{equation*}
The critical points of $\widetilde{J}_{\epsilon}$ are the solutions to problem \eqref{es2}.

\begin{proposition}
\label{pro3} The function $V_{\mathbf{s},\mathbf{d},\mathbf{t}}+\phi
_{\mathbf{s},\mathbf{d},\mathbf{t}}^{\epsilon}$ is a critical point of the
functional $J_{\epsilon}$ if and only if the point $(\mathbf{s},\mathbf{d}
,\mathbf{t})$ is a critical point of the function $\widetilde{J}_{\epsilon}.$
\end{proposition}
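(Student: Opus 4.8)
The plan is the standard finite-dimensional reduction argument: reformulate the criticality of $J_{\epsilon}$ at $W:=V_{\mathbf{s},\mathbf{d},\mathbf{t}}+\phi^{\epsilon}_{\mathbf{s},\mathbf{d},\mathbf{t}}$ as the vanishing of finitely many ``Lagrange multipliers'', and show that this vanishing is equivalent to $(\mathbf{s},\mathbf{d},\mathbf{t})$ being a critical point of $\widetilde{J}_{\epsilon}$. First I would record that, with respect to the inner product $(\cdot,\cdot)$, one has $\nabla J_{\epsilon}(u)=u-i^{\ast}(f_{\epsilon}(u))$, so that \eqref{p2} is precisely $\nabla J_{\epsilon}(u)=0$. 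Since $\phi^{\epsilon}_{\mathbf{s},\mathbf{d},\mathbf{t}}\in K^{\perp}_{\mathbf{s},\mathbf{d},\mathbf{t}}$ solves \eqref{es1}, the component $\Pi^{\perp}_{\mathbf{s},\mathbf{d},\mathbf{t}}\nabla J_{\epsilon}(W)$ vanishes; hence $\nabla J_{\epsilon}(W)\in K_{\mathbf{s},\mathbf{d},\mathbf{t}}$ and we may write $\nabla J_{\epsilon}(W)=\sum_{i,j}c_{i}^{j}(\mathbf{s},\mathbf{d},\mathbf{t})\,P\psi_{i}^{j}$. Thus $W$ solves \eqref{p1} if and only if all the $c_{i}^{j}$ vanish, and one implication is then immediate: if $W$ is a critical point of $J_{\epsilon}$ then $\nabla J_{\epsilon}(W)=0$, and since $\phi^{\epsilon}_{\mathbf{s},\mathbf{d},\mathbf{t}}$ depends in a $C^{1}$ way on the parameters (a by-product of the implicit-function/contraction scheme behind Proposition \ref{pro2}), the chain rule gives $\nabla\widetilde{J}_{\epsilon}=0$.

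For the converse, which is the substantive direction, I would assume $(\mathbf{s},\mathbf{d},\mathbf{t})$ is a critical point of $\widetilde{J}_{\epsilon}$ and let $\partial_{a}$ denote differentiation in any one of the scalar parameters comprising $\mathbf{s},\mathbf{d},\mathbf{t}$. The chain rule gives $0=\partial_{a}\widetilde{J}_{\epsilon}=(\nabla J_{\epsilon}(W),\partial_{a}V_{\mathbf{s},\mathbf{d},\mathbf{t}})+(\nabla J_{\epsilon}(W),\partial_{a}\phi^{\epsilon}_{\mathbf{s},\mathbf{d},\mathbf{t}})$. The second term is of lower order: differentiating the constraints $(\phi^{\epsilon}_{\mathbf{s},\mathbf{d},\mathbf{t}},P\psi_{i}^{j})=0$ in $a$ yields $(\partial_{a}\phi^{\epsilon}_{\mathbf{s},\mathbf{d},\mathbf{t}},P\psi_{i}^{j})=-(\phi^{\epsilon}_{\mathbf{s},\mathbf{d},\mathbf{t}},\partial_{a}P\psi_{i}^{j})$, so, inserting $\nabla J_{\epsilon}(W)=\sum c_{i}^{j}P\psi_{i}^{j}$ and the bound $\|\phi^{\epsilon}_{\mathbf{s},\mathbf{d},\mathbf{t}}\|\le c\,\epsilon^{1/2+\sigma}$ of Proposition \ref{pro2}, this term is $o(1)\,\|\mathbf{c}\|$ as $\epsilon\to0$, uniformly over the compact set $\mathbf{C}$, where $\mathbf{c}=(c_{i}^{j})$. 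For the first term, $(\nabla J_{\epsilon}(W),\partial_{a}V_{\mathbf{s},\mathbf{d},\mathbf{t}})=\sum_{i',j'}c_{i'}^{j'}(P\psi_{i'}^{j'},\partial_{a}V_{\mathbf{s},\mathbf{d},\mathbf{t}})$, and in the coordinates $\delta_{i}=\epsilon^{(n-1)/(n-2)}d_{i}$, $\xi_{i}=s_{i}+\epsilon t_{i}\nu(s_{i})$ each tangent vector $\partial_{a}V_{\mathbf{s},\mathbf{d},\mathbf{t}}$ equals, up to an explicit nonzero scalar and a remainder small relative to $K_{\mathbf{s},\mathbf{d},\mathbf{t}}$, one of the generators $P\psi_{i}^{j}$; hence, after a suitable diagonal normalization, the matrix $\mathbf{A}_{\epsilon}:=\big((P\psi_{i'}^{j'},\partial_{a}V_{\mathbf{s},\mathbf{d},\mathbf{t}})\big)$ is a small perturbation of the block-diagonal, positive-definite Gram matrix $\big((P\psi_{i}^{j},P\psi_{i'}^{j'})\big)$, so it is invertible with $\|\mathbf{A}_{\epsilon}^{-1}\|$ bounded uniformly over $\mathbf{C}$ for small $\epsilon$. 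The relations above then read $\mathbf{A}_{\epsilon}\mathbf{c}=\mathbf{r}$ with $\|\mathbf{r}\|\le o(1)\|\mathbf{c}\|$, which forces $\mathbf{c}=0$ for $\epsilon$ small, i.e. $\nabla J_{\epsilon}(W)=0$ and $W$ solves \eqref{p1}.

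In the setting of Theorem \ref{main4} I would run the same argument inside the subspace of $\varrho_{1},\dots,\varrho_{n-1}$-invariant functions of $\mathrm{H}_{0}^{1}(\Omega)$, where $K_{\mathbf{s},\mathbf{d},\mathbf{t}}$ is generated only by the invariant $P\psi_{i}^{j}$ (the scaling and normal directions), matched one-to-one by the parameters $d_{i},t_{i}$, so the linear system stays square and uniformly invertible. The routine ingredients --- $C^{1}$ dependence of $\phi^{\epsilon}_{\mathbf{s},\mathbf{d},\mathbf{t}}$ on the parameters with bounds on $\partial_{a}\phi^{\epsilon}_{\mathbf{s},\mathbf{d},\mathbf{t}}$, and the precise asymptotics of the $P\psi_{i}^{j}$, of $\partial_{a}V_{\mathbf{s},\mathbf{d},\mathbf{t}}$ and of their mutual inner products --- are by now standard, and I would quote them with the obvious modifications from the references behind Propositions \ref{pro1} and \ref{pro2}, using the Green-function estimates of Appendix A for the boundary expansions. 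I expect the only genuinely delicate step to be the uniform invertibility of $\mathbf{A}_{\epsilon}$, over the compact parameter set $\mathbf{C}$ and for all small $\epsilon$ simultaneously: this is exactly where the scalings \eqref{par}, \eqref{pt1}--\eqref{pt2} and the boundary behaviour of the projected bubbles must be tracked carefully, since a naive estimate loses the uniformity needed to conclude $\mathbf{c}=0$.
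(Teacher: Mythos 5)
Your proof is correct and follows exactly the standard finite-dimensional reduction argument (Lagrange-multiplier representation $\nabla J_{\epsilon}(W)=\sum c_i^j P\psi_i^j$, differentiation of the constraints, uniform invertibility of the normalized Gram-type matrix $\mathbf{A}_{\epsilon}$), which is precisely the scheme of Proposition~1 in \cite{MR1384837} that the paper cites for its proof. Your remarks on the $o(1)\lVert\mathbf{c}\rVert$ estimate for the $\partial_a\phi$-contribution, on the necessity of a diagonal normalization reflecting the scalings \eqref{par}, \eqref{pt1}--\eqref{pt2}, and on restricting to the $\varrho_i$-invariant subspace in the setting of Theorem~\ref{main4} correctly identify where the actual work lies.
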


\begin{proof}
We argue as in Proposition 1 of \cite{MR1384837}.
\end{proof}

The problem is thus reduced to the search for critical points of
$\widetilde{J}_{\epsilon},$ so it is necessary to compute the asymptotic
expansion of $\widetilde{J}_{\epsilon}$.

\begin{proposition}
\label{pro4-1}In case \eqref{ans1} it holds true that
\begin{align}
&  \widetilde{J}_{\epsilon}(\mathbf{s},\mathbf{d},\mathbf{t})=\left(
c_{1}+c_{2}\epsilon\log\epsilon\right)
{\sum\limits_{i=1}^{\kappa}}
a(s_{i})\label{eq1-pro4-1}\\
&  +\epsilon
{\sum\limits_{i=1}^{\kappa}}
\left[  c_{3}a(s_{i})+c_{4}\langle\nabla a(s_{i}),\nu(s_{i})\rangle
t_{i}+c_{5}a(s_{i})\left(  \frac{d_{i}}{2t_{i}}\right)  ^{n-2}-c_{6}
a(s_{i})\log d_{i}\right]  +o(\epsilon),\nonumber
\end{align}
$C^{1}$-uniformly on compact sets of $\Lambda.$ Here the $c_{i}$'s are
constants and $c_{4},c_{5},c_{6}$ are positive.
\end{proposition}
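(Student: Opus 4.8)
The plan is to expand $\widetilde J_\epsilon(\mathbf s,\mathbf d,\mathbf t)=J_\epsilon(V_{\mathbf s,\mathbf d,\mathbf t}+\phi^\epsilon_{\mathbf s,\mathbf d,\mathbf t})$ by first reducing to $J_\epsilon(V_{\mathbf s,\mathbf d,\mathbf t})$ and then carefully expanding the latter. By Proposition~\ref{pro2} we have $\|\phi^\epsilon_{\mathbf s,\mathbf d,\mathbf t}\|\le c\,\epsilon^{1/2+\sigma}$, and since $J'_\epsilon(V_{\mathbf s,\mathbf d,\mathbf t})[\,\cdot\,]$ is controlled by the error term $R_{\mathbf s,\mathbf d,\mathbf t}$ of \eqref{re} (whose rate is estimated in Appendix~B, say $\|R\|=O(\epsilon^{1/2+\sigma'})$), a standard Taylor expansion of $J_\epsilon$ at $V_{\mathbf s,\mathbf d,\mathbf t}$ in the direction $\phi^\epsilon$ gives $\widetilde J_\epsilon(\mathbf s,\mathbf d,\mathbf t)=J_\epsilon(V_{\mathbf s,\mathbf d,\mathbf t})+o(\epsilon)$, with the same estimate holding for the first derivatives in $(\mathbf s,\mathbf d,\mathbf t)$ (this is the by-now-classical argument, as in Proposition~1 of \cite{MR1384837}). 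Thus everything reduces to a $C^1$-uniform asymptotic expansion of $J_\epsilon(V_{\mathbf s,\mathbf d,\mathbf t})$ on compact sets of $\Lambda$; this is the computation done in Appendix~C.

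For that expansion I would write $V_{\mathbf s,\mathbf d,\mathbf t}=\sum_i(-1)^{\lambda_i}PU_{\delta_i,\xi_i}$ and split $J_\epsilon$ into the quadratic part $\tfrac12\|V\|^2=\tfrac12\int a\,|\nabla V|^2$ and the nonlinear part $\tfrac1{p-\epsilon}\int a\,|V|^{p-\epsilon}$. For the quadratic part I use $\|PU_{\delta_i,\xi_i}\|^2=\int a\, U_{\delta_i,\xi_i}^{p-1}PU_{\delta_i,\xi_i}$ and the standard expansion $PU_{\delta,\xi}=U_{\delta,\xi}-\alpha_n\delta^{(n-2)/2}H(x,\xi)+o(\cdot)$ in terms of the regular part of the Green function; here the closeness of $\xi_i$ to $\partial\Omega$ forces me to use the boundary estimates for the Green function from Appendix~A, which is the genuinely new ingredient compared to the interior-blow-up literature. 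The cross terms $(PU_{\delta_i,\xi_i},PU_{\delta_j,\xi_j})$ for $i\ne j$ are exponentially small (or at least $o(\epsilon)$) because the $s_i$ are distinct on the compact set $\mathbf C$, so only the diagonal survives. For the nonlinear term I expand $|V|^{p-\epsilon}$ around each bubble, using $\int_\Omega a\,U_{\delta,\xi}^{p-\epsilon}$-type integrals; the factor $\epsilon$ produces a $\log$ contribution through $s^{-\epsilon}=1-\epsilon\log s+O(\epsilon^2)$, generating the $c_2\epsilon\log\epsilon$ and $-c_6 a(s_i)\log d_i$ terms, while Taylor-expanding $a(x)$ around $s_i$ against the (boundary-concentrated) bubble mass produces the $c_3 a(s_i)$ term and, crucially, the $c_4\langle\nabla a(s_i),\nu(s_i)\rangle t_i$ term — this last one comes from the first-order term $\langle\nabla a(s_i),x-s_i\rangle$ integrated against $U_{\delta_i,\xi_i}^{p-\epsilon}$ with $\xi_i=s_i+\epsilon t_i\nu(s_i)$, whose odd part in the normal direction does not vanish precisely because the bubble sits near the boundary (so its reflection/truncation is not symmetric). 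The term $c_5 a(s_i)(d_i/2t_i)^{n-2}$ is the interaction of the bubble with its "boundary image", i.e. the leading contribution of the Green-function correction $H$, whose size is governed by $\delta_i^{n-2}/\dist(\xi_i,\partial\Omega)^{n-2}\sim (d_i/2t_i)^{n-2}$ after rescaling by $\epsilon^{(n-1)/(n-2)}$ and $\epsilon$ respectively — this is exactly where the scaling ansatz \eqref{par}, \eqref{pt1} is calibrated so that these two terms balance at order $\epsilon$.

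The main obstacle is the Green-function analysis near the boundary: one needs sharp pointwise and integral estimates on the regular part $H(x,\xi)$ and its gradient when $\xi$ is at distance $O(\epsilon)$ from $\partial\Omega$, uniformly in the direction, in order both to show the cross terms and the higher-order pieces are genuinely $o(\epsilon)$ and to extract the exact coefficient $c_5$ of the $(d_i/2t_i)^{n-2}$ term and its $C^1$ dependence on the parameters. This is precisely what Appendix~A is devoted to, and the rest of the proof is bookkeeping of the resulting integrals; the $C^1$-uniformity is obtained by differentiating the expansions term by term, each differentiation in $d_i$, $t_i$ or $s_i$ being absorbed by the same estimates since one extra derivative costs at most a bounded factor on the compact set $\mathbf C$. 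Modulo Appendices~A, B and C the statement then follows by collecting terms and relabelling the constants $c_1,\dots,c_6$, with $c_4,c_5,c_6>0$ read off from the explicit bubble integrals.
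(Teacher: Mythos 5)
Your overall strategy matches the paper exactly: reduce $\widetilde J_\epsilon$ to $J_\epsilon(V_{\mathbf s,\mathbf d,\mathbf t})$, split into quadratic and nonlinear parts, expand $PU = U - \alpha_n\delta^{(n-2)/2}H + R$ using the boundary estimates of Appendix~A, extract the $(d_i/2t_i)^{n-2}$ term from the $H$-correction, and obtain the $\epsilon\log\epsilon$ and $\log d_i$ terms from writing $\abs{s}^{-\epsilon} = 1 - \epsilon\log\abs{s} + O(\epsilon^2)$ together with $\log\delta_i = \tfrac{n-1}{n-2}\log\epsilon + \log d_i$.

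However, your explanation of the mechanism behind the $c_4\langle\nabla a(s_i),\nu(s_i)\rangle t_i$ term is wrong in a way that would trip you up if you tried to compute it as stated. You say the term survives because ``its odd part in the normal direction does not vanish precisely because the bubble sits near the boundary (so its reflection/truncation is not symmetric).'' In fact, after the change of variables $x = \xi_i + \delta_i y$, the inner product $\langle\nabla a(s_i),x-s_i\rangle$ splits as $\langle\nabla a(s_i),\xi_i-s_i\rangle + \delta_i\langle\nabla a(s_i),y\rangle$; the second summand is odd in $y$ and its integral against the radial weight $(1+\abs{y}^2)^{-n}$ \emph{does} vanish to leading order (the truncation to $B(0,\eta/\delta_i)$ only produces a higher-order error). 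The term comes entirely from the first summand, i.e.\ from the constant offset $\xi_i - s_i = \epsilon t_i\nu(s_i)$, not from any asymmetry induced by the boundary. This is exactly the computation in Lemma~\ref{lew1}: the mean value theorem gives $a(\delta_i y + \xi_i) - a(s_i) = \langle\nabla a(s_i),\nu(s_i)\rangle\eta_i + \delta_i\langle\nabla a(s_i),y\rangle + R(y)$, and only the constant piece $\langle\nabla a(s_i),\nu(s_i)\rangle\eta_i$ contributes at order $\epsilon$ after integration. A similar caveat applies to your attribution of the $c_3 a(s_i)$ term to the Taylor expansion of $a$: that coefficient actually originates from the $\epsilon$-shift in the exponent (Lemma~\ref{leq1}, involving the integrals $\gamma_1$ and $\gamma_3$), not from the weight. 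These are not gaps in the high-level plan, but they are genuine misidentifications of where two of the six constants come from, and executing the computation with the stated reasoning would not reproduce the correct leading-order terms.
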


\begin{proof}
The proof is postponed to Appendix C.
\end{proof}

\begin{proposition}
\label{pro4}In case \eqref{ans2} it holds true that
\begin{equation}
\widetilde{J}_{\epsilon}(\mathbf{s},\mathbf{d},\mathbf{t})=\widetilde
{J}_{\epsilon}(\mathbf{d},\mathbf{t})=a(\xi_{0})\left[  c_{1}+c_{2}
\epsilon\log\epsilon+c_{3}\epsilon\right]  +\epsilon\Psi(\mathbf{d}
,\mathbf{t})+o(\epsilon), \label{eq1-pro4}
\end{equation}
$C^{0}$-uniformly on compact sets of $\Lambda.$ Here
\begin{multline}\label{psi}
  \Psi(\mathbf{d},\mathbf{t})
  :=c_{4}\langle\nabla a(\xi_{0}),\nu(\xi_{0})\rangle
  {\sum\limits_{i=1}^{\ell}}
  t_{i}
  +c_{5}a(\xi_{0})\times\\
  \times\Biggl\{
  {\sum\limits_{i=1}^{\ell}}
  \left(  \frac{d_{i}}{2t_{i}}\right)  ^{n-2}
  +{\sum\limits_{\genfrac{}{}{0pt}{}{i,j=1}{i\not =j}}^{\ell}}
  (-1)^{i+j+1}(d_{i}d_{j})^{\frac{n-2}{2}}\left[  \frac{1}{|t_{i}-t_{j}|^{n-2}
    }-\frac{1}{|t_{i}+t_{j}|^{n-2}}\right]  \Biggr\} \\
  \phantom{:={}}   -c_{6}a(\xi_{0})
  {\sum\limits_{i=1}^{\ell}}
  \log d_{i}
\end{multline}
where the $c_{i}$'s are constants and $c_{4},c_{5},c_{6}$ are positive.
\end{proposition}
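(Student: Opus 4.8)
The plan is to reduce the evaluation of $\widetilde J_{\epsilon}$ to that of the energy $J_{\epsilon}$ on the approximate solution $V_{\mathbf{d},\mathbf{t}}$, and then to expand $J_{\epsilon}(V_{\mathbf{d},\mathbf{t}})$ term by term, essentially as in Appendix C, where the analogous expansion \eqref{eq1-pro4-1} is carried out for configuration \eqref{ans1}; the only genuinely new feature is the interaction between the bubbles $PU_{\delta_{i},\xi_{i}}$, which in configuration \eqref{ans2} all collapse onto $\xi_{0}$ along the line $\mathfrak{L}$. First I would show $\widetilde J_{\epsilon}(\mathbf{d},\mathbf{t})=J_{\epsilon}(V_{\mathbf{d},\mathbf{t}})+o(\epsilon)$, $C^{0}$-uniformly on compact sets of $\Lambda$. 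Writing $V:=V_{\mathbf{d},\mathbf{t}}$, $\phi:=\phi_{\mathbf{s},\mathbf{d},\mathbf{t}}^{\epsilon}$ and Taylor-expanding,
\[
J_{\epsilon}(V+\phi)=J_{\epsilon}(V)+J_{\epsilon}'(V)[\phi]+\tfrac12 J_{\epsilon}''(V+\theta\phi)[\phi,\phi],\qquad\theta\in(0,1),
\]
and using that $J_{\epsilon}'(V)[\phi]=(V-i^{\ast}[f_{\epsilon}(V)],\phi)$ while $\phi\in K_{\mathbf{s},\mathbf{d},\mathbf{t}}^{\perp}$, only the $K_{\mathbf{s},\mathbf{d},\mathbf{t}}^{\perp}$-component of $V-i^{\ast}[f_{\epsilon}(V)]$, namely the error term $R_{\mathbf{s},\mathbf{d},\mathbf{t}}$ of \eqref{re}, survives in this pairing, so $|J_{\epsilon}'(V)[\phi]|\le\|R_{\mathbf{s},\mathbf{d},\mathbf{t}}\|\,\|\phi\|$. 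The estimate on $\|R_{\mathbf{s},\mathbf{d},\mathbf{t}}\|$ obtained in Appendix B together with \eqref{eq2-pro2}, and the uniform bound on $\|J_{\epsilon}''(V+\theta\phi)\|$ on compact subsets of $\Lambda$, make both correction terms $o(\epsilon)$.

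Next I would expand $J_{\epsilon}(V)=\tfrac12\|V\|^{2}-\tfrac1{p-\epsilon}\int_{\Omega}a|V|^{p-\epsilon}$. Splitting $\|V\|^{2}=\sum_{i}\|PU_{\delta_{i},\xi_{i}}\|^{2}+\sum_{i\ne j}(-1)^{i+j}(PU_{\delta_{i},\xi_{i}},PU_{\delta_{j},\xi_{j}})$ and expanding $\int_{\Omega}a|V|^{p-\epsilon}$ around $\sum_{i}\int_{\Omega}a\,U_{\delta_{i},\xi_{i}}^{p}$ using $|V|^{p-\epsilon}=|V|^{p}-\epsilon|V|^{p}\log|V|+O(\epsilon^{2}|V|^{p}\log^{2}|V|)$, the diagonal contributions are computed exactly as in the single-bubble computation behind \eqref{eq1-pro4-1}: with $PU_{\delta,\xi}=U_{\delta,\xi}-\alpha_{n}\delta^{(n-2)/2}H_{\Omega}(\cdot,\xi)+\text{l.o.t.}$ ($H_{\Omega}$, resp.\ $G_{\Omega}$, the regular part of, resp., the Dirichlet Green function of $-\Delta$ in $\Omega$), one checks that the terms carrying $\nabla a$ or $\Delta a$ that arise upon integration by parts are $o(\epsilon)$, Taylor-expands $a$ at $\xi_{0}$, and inserts $\log U_{\delta_{i},\xi_{i}}=\frac{n-2}{2}\log\frac1{\delta_{i}}+O(1)=\frac{n-1}{2}\log\frac1\epsilon+\frac{n-2}{2}\log\frac1{d_{i}}+O(1)$ (recall $\delta_{i}=\epsilon^{(n-1)/(n-2)}d_{i}$), $a(\xi_{i})=a(\xi_{0})+\epsilon t_{i}\langle\nabla a(\xi_{0}),\nu(\xi_{0})\rangle+O(\epsilon^{2})$ (recall $\xi_{i}=\xi_{0}+\epsilon t_{i}\nu(\xi_{0})$), and the Appendix A estimate for $H_{\Omega}(\xi_{i},\xi_{i})$ in terms of $\dist(\xi_{i},\partial\Omega)=\epsilon t_{i}$, which gives $\delta_{i}^{n-2}H_{\Omega}(\xi_{i},\xi_{i})=\epsilon(d_{i}/2t_{i})^{n-2}(1+o(1))$ up to a positive constant. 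This produces the terms $a(\xi_{0})[c_{1}+c_{2}\epsilon\log\epsilon+c_{3}\epsilon]$, $\epsilon\,c_{4}\langle\nabla a(\xi_{0}),\nu(\xi_{0})\rangle\sum_{i}t_{i}$ (the tangential part of $\nabla a(\xi_{0})$ dropping out by oddness), $\epsilon\,c_{5}a(\xi_{0})\sum_{i}(d_{i}/2t_{i})^{n-2}$ and $-\epsilon\,c_{6}a(\xi_{0})\sum_{i}\log d_{i}$ of \eqref{eq1-pro4}–\eqref{psi}, with $c_{5},c_{6}>0$ and $c_{1},c_{2},c_{3}$ as in \eqref{eq1-pro4-1} up to the multiplicity factor $\ell$.

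The interaction terms are the new point and are of order $\epsilon$, because the $\xi_{i}$ collapse to $\xi_{0}$ at the same rate at which they approach $\partial\Omega$. From $(PU_{\delta_{i},\xi_{i}},PU_{\delta_{j},\xi_{j}})=\int_{\Omega}U_{\delta_{i},\xi_{i}}^{p-1}PU_{\delta_{j},\xi_{j}}+\text{l.o.t.}$, $PU_{\delta_{j},\xi_{j}}=\alpha_{n}\delta_{j}^{(n-2)/2}G_{\Omega}(\cdot,\xi_{j})+\text{l.o.t.}$, and the analogous expansion of $\int_{\Omega}a(|V|^{p-\epsilon}-\sum_{i}U_{\delta_{i},\xi_{i}}^{p})$, the interactions are of order $(\delta_{i}\delta_{j})^{(n-2)/2}G_{\Omega}(\xi_{i},\xi_{j})$. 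By Appendix A, $G_{\Omega}(\xi_{i},\xi_{j})=\gamma_{n}(|\xi_{i}-\xi_{j}|^{2-n}-|\xi_{i}-\xi_{j}^{\ast}|^{2-n})(1+O(\epsilon))$ with $\gamma_{n}>0$ and $\xi_{j}^{\ast}$ the reflection of $\xi_{j}$ in $\partial\Omega$; since all $\xi_{i}$ lie on $\mathfrak{L}$ and $\partial\Omega$ is $C^{2}$, $|\xi_{i}-\xi_{j}|=\epsilon|t_{i}-t_{j}|$ and $|\xi_{i}-\xi_{j}^{\ast}|=\epsilon|t_{i}+t_{j}|+O(\epsilon^{2})$, so the interaction equals $\epsilon\,(\text{const})\,(d_{i}d_{j})^{(n-2)/2}\bigl[|t_{i}-t_{j}|^{2-n}-|t_{i}+t_{j}|^{2-n}\bigr](1+o(1))$. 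Collecting the quadratic and nonlinear contributions — the standard bookkeeping, cf.\ \cite{MR2232205,MR2734352}, which turns the sign $(-1)^{i+j}$ of the alternating ansatz into the $(-1)^{i+j+1}$ of \eqref{psi} — assembles everything into \eqref{eq1-pro4}–\eqref{psi}, all remainders being uniform on compact subsets of $\Lambda$ since there $d_{i}$, $t_{i}$ and $|t_{i}-t_{j}|$ range over a fixed compact subinterval of $(0,\infty)$. The hard part is precisely this last step: one must identify not only the direct term $|t_{i}-t_{j}|^{2-n}$ but also the boundary ``image'' term $-|t_{i}+t_{j}|^{2-n}$ coming from the near-boundary behaviour of $G_{\Omega}$, and must verify that the curvature of $\partial\Omega$ displaces $\xi_{j}^{\ast}$ by only $O(\epsilon^{2})$ and hence affects the interaction only at order $o(\epsilon)$ — which is exactly the role of the Green function estimates of Appendix A — all while keeping track of the signs. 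Since only $C^{0}$-uniformity is claimed here (in case \eqref{ans2} the critical point of $\widetilde J_{\epsilon}$ will be extracted from that of $\Psi$ by a compactness argument rather than by differentiating the expansion), no $C^{1}$ control of the remainder is needed, which makes this computation lighter than the one behind \eqref{eq1-pro4-1}.
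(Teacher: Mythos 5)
Your proposal follows essentially the same route as the paper's Appendix~C: reduce $\widetilde J_\epsilon$ to $J_\epsilon(V_{\mathbf{d},\mathbf{t}})$ up to $o(\epsilon)$, split into quadratic and $p{-}\epsilon$ terms, compute the diagonal contributions via the near-boundary expansion $PU_{\delta,\xi}\approx U_{\delta,\xi}-\alpha_n\delta^{(n-2)/2}H(\cdot,\xi)$ plus the Green-function estimates of Appendix~A, and identify the interaction as $(\delta_i\delta_j)^{(n-2)/2}\bigl[|t_i-t_j|^{2-n}-|t_i+t_j|^{2-n}\bigr]\epsilon$. One small sharpening: since all $\xi_i$ lie on the single normal line $\mathfrak{L}$ through $\xi_0$, the reflection $\bar\xi_j$ equals $\xi_0-\epsilon t_j\nu(\xi_0)$ exactly, so $|\xi_i-\bar\xi_j|=\epsilon(t_i+t_j)$ with no curvature correction at all; the $o(\epsilon)$ error in the interaction comes entirely from the bound $|H(x,y)-|\bar x-y|^{2-n}|\le Cd_x/|\bar x-y|^{n-2}$ of Lemma~\ref{lem1}, not from a displacement of $\bar\xi_j$.
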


\begin{proof}
The proof is postponed to Appendix C.
\end{proof}

\begin{proof}
[Proof of Theorem \ref{main3}]Firstly, by Proposition \ref{pro4-1}, we get
\begin{equation*}
\widetilde{J}_{\epsilon}(\mathbf{s},\mathbf{d},\mathbf{t})=\left(  c_{1}
+c_{2}\epsilon\log\epsilon\right)
{\sum\limits_{i=1}^{\kappa}}
a(s_{i})+O(\epsilon),
\end{equation*}
$C^{1}$-uniformly on compact sets of $\Lambda.$ Then, since $\xi_{1},\dots
,\xi_{\kappa}$ are non degenerate critical points of $a$ constrained to the
boundary of $\Omega,$ if $\epsilon$ is small enough there exist $\mathbf{s}
_{\epsilon}:=(s_{1,\epsilon},\dots,s_{\kappa,\epsilon})$ such that each
$s_{i,\epsilon}\rightarrow\xi_{i}$ as $\epsilon$ goes to zero, and
$\nabla_{\mathbf{s}}\widetilde{J}_{\epsilon}(\mathbf{s}_{\epsilon}
,\mathbf{d},\mathbf{t})=0.$ Secondly, by Proposition \ref{pro4-1}, we also
get
\begin{multline*}
\widetilde{J}_{\epsilon}(\mathbf{s}_{\epsilon},\mathbf{d},\mathbf{t}
)-\left(  c_{1}+c_{2}\epsilon\log\epsilon\right)
{\sum\limits_{i=1}^{\kappa}}
a(s_{i,\epsilon})\\
\begin{aligned}
&  =\epsilon\sum\limits_{i=1}^{\kappa}\Biggl[  c_{3}a(s_{i,\epsilon}
)+c_{4}\langle\nabla a(s_{i,\epsilon}),\nu(s_{i,\epsilon})\rangle t_{i}\\
&\hspace{4cm}+c_{5}a(s_{i,\epsilon})\left(  \frac{d_{i}}{2t_{i}}\right)  ^{n-2}
-c_{6}a(s_{i,\epsilon})\log d_{i}\Biggr]  +o(\epsilon)\\
&  =\epsilon\sum\limits_{i=1}^{\kappa}\Biggl[  c_{3}a(\xi_{i})+c_{4}
\langle\nabla a(\xi_{i}),\nu(\xi_{i})\rangle t_{i}\\
&\hspace{4cm}+c_{5}a(\xi_{i})\left(
\frac{d_{i}}{2t_{i}}\right)  ^{n-2}-c_{6}a(\xi_{i})\log d_{i}\Biggr]
+o(\epsilon).
\end{aligned}
\end{multline*}
It is easy to verify that the function
\begin{equation*}
(\mathbf{d},\mathbf{t})\rightarrow\sum\limits_{i=1}^{\kappa}\left[
c_{4}\langle\nabla a(\xi_{i}),\nu(\xi_{i})\rangle t_{i}+c_{5}a(\xi_{i})\left(
\frac{d_{i}}{2t_{i}}\right)  ^{n-2}-c_{6}a(\xi_{i})\log d_{i}\right]
\end{equation*}
has a minimum point which is stable under $C^{0}$-perturbations. Therefore,
there exists a point $(\mathbf{d}_{\epsilon},\mathbf{t}_{\epsilon})$ such that
$\nabla_{(\mathbf{d},\mathbf{t})}\widetilde{J}_{\epsilon}(\mathbf{s}
_{\epsilon},\mathbf{d}_{\epsilon},\mathbf{t}_{\epsilon})=0.$ Thus, the
function $\widetilde{J}_{\epsilon}$ has a critical point and the claim follows
from Proposition \ref{pro3}.
\end{proof}

\begin{proof}
[Proof of Theorem \ref{main4}]In this case $\ell=2$ and function $\Psi$
defined in \eqref{psi} reduces to
\begin{align*}
&  \Psi(\mathbf{d},\mathbf{t})=c_{4}\langle a(\xi_{0}),\nu(\xi_{0}
)\rangle(t_{1}+t_{2})\\
&  +c_{5}a(\xi_{0})\left\{  \left(  \frac{d_{1}}{2t_{1}}\right)
^{n-2}+\left(  \frac{d_{2}}{2t_{2}}\right)  ^{n-2}+2(d_{1}d_{2})^{\frac
{n-2}{2}}\left[  \frac{1}{|t_{i}-t_{j}|^{n-2}}-\frac{1}{|t_{i}+t_{j}|^{n-2}
}\right]  \right\} \\
&  -c_{6}a(\xi_{0})(\log d_{1}+\log d_{2}).
\end{align*}
It is easy to verify that it has minimum point which is stable under $C^{0}
$-perturbations. Therefore, from Proposition \ref{pro4} we deduce that, if
$\epsilon$ is small enough, the function $\widetilde{J}_{\epsilon}$ has a
critical point. Now the claim follows from Proposition \ref{pro3}.
\end{proof}

\begin{remark}
\label{rem} The symmetry assumption $(a2)$ allows to overcome some
technical difficulties which arise when looking for a solution whose bubbles
collapse to the same point. Indeed, the problem arises when we study the
reduced energy and we have to compute the contribution of each peak and the
interaction among the peaks. The contribution of each peak is clear: it is
given by the distance from the peak to the boundary as in \eqref{eq:10} and by
the value of the function $a$ at the projection of the peak onto the
boundary as in \eqref{eq:100}. On the other hand, to compute the interaction
among the peaks (see \eqref{eq:11}) it is important to compare the geodesic
distance $d(s_{i},s_{j})$ between the projections of the peaks onto the
boundary with the distance $|\eta_{i}\nu(s_{i})-\eta_{j}\nu(s_{j})|$
between the normal components of the peaks. To have a good expansion the
distance $d(s_{i},s_{j})$ should be negligible with respect to the
distance $|\eta_{i}\nu(s_{i})-\eta_{j}\nu(s_{j})|.$ But then, in order
to find a criticality in the points $s_{i}$, we need to go further in
the expansion and computations become too tedious. If the domain $\Omega
$ and the function $a$ are symmetric, we can overcome this
difficulty just by assuming that the peaks satisfy \eqref{pt2}, so that
$d(s_{i},s_{j})=0.$ In this case the interaction among the peaks is
clear and it is given in terms of the Green function of the Laplace operator
on the half-space (see \eqref{eq:11}). 
\end{remark}

\appendix

\section{Boundary estimates of the Green function}

In this section we establish the technical estimates we used in the previous
part. We denote by $G(x,y)$ the Green function of the Laplacian with Dirichlet
boundary condition and by $H(x,y)$ its regular part, i.e.
\begin{equation*}
G(x,y)=\frac{1}{n(n-2)\omega_{n}|x-y|^{n-2}}-H(x,y),
\end{equation*}
where $\omega_{n}$ is the volume of the unit ball in $\mathbb{R}^{n}$.

First of all, we need an accurate estimate of $H(x,y)$ when the points $x$ and
$y$ are close to the boundary. Let us introduce some notation. For $\eta>0$ we
write $\Omega_{\eta}:=\{x\in\Omega\ :\ \mathrm{dist}(x,\partial\Omega)\leq
\eta\}$. We fix $\eta$ small enough so that the orthogonal projection
$p:\Omega_{2\eta}\rightarrow\partial\Omega$ onto the boundary is well defined,
i.e. so that for each $x\in\Omega_{2\eta}$ there is a unique point
$p(x)\in\partial\Omega$ with $\mathrm{dist}(x,\partial\Omega)=|p(x)-x|$. Set
$d_{x}:=\mathrm{dist}(x,\partial\Omega)$, $p_{x}:=p(x)$, and $\nu_{x}:=\nu
(x)$, where as before $\nu(x)$ denotes the inward normal to $\partial\Omega$
at $x$. For $x\in\Omega_{2\eta}$ we define $\bar{x}:=p_{x}-d_{x}\nu
_{x}=x-2d_{x}\nu_{x}.$ Thus, $\bar{x}$ is the reflection of $x$ on
$\partial\Omega$.

 \begin{lemma}\label{lem1}
   There exists $C>0$ such that
   \begin{align}\label{eq:2}
     \xabs{H(x,y)-\frac{1}{\abs{\bar x-y}^{n-2}}}&\le\frac{Cd_x}{\abs{\bar x-y}^{n-2}}\\
     \xabs{\nabla_x\xlr(){H(x,y)-\frac{1}{\abs{\bar x-y}^{n-2}}}}&\le\frac{C}{\abs{\bar
         x-y}^{n-2}}\label{eq:3}
   \end{align}
   for all $x\in\Omega_\eta$ and $y\in\Omega$.  In particular, there
   exists $C>0$ such that
   \begin{equation}\label{eq:31}
     0\le H(x,y) \le \frac{C}{ \abs{\bar x-  y}^{n-2}},
     \qquad x\in\Omega_\eta,\ y\in\Omega 
   \end{equation}
   and
  \begin{equation}
    \label{eq:15}
    \abs{\nabla_x H(x,y)}\le \frac{C}{\abs{x-y}^{n-1}}
    \qquad x,y\in\Omega.
  \end{equation}
 \end{lemma}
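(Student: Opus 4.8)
\textbf{Proof plan for Lemma \ref{lem1}.}

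The plan is to compare the regular part $H(x,y)$ with the explicit harmonic function $y\mapsto|\bar x-y|^{-(n-2)}$ obtained by reflecting the singularity at $x$ across the boundary, exactly as one does to construct the Green function of a half-space. Fix $x\in\Omega_\eta$ and set $h(y):=|\bar x-y|^{-(n-2)}$. Since $\bar x\notin\overline\Omega$ (it is the mirror image of $x$, which lies within distance $\eta$ of $\partial\Omega$), the function $h$ is harmonic in $\Omega$, so the difference $w(y):=H(x,y)-h(y)$ is harmonic in $\Omega$ as well. On $\partial\Omega$ we have $H(x,y)=\dfrac{1}{n(n-2)\omega_n|x-y|^{n-2}}$ up to the normalization convention in the excerpt (I will absorb the constant $n(n-2)\omega_n$ into the statement as the authors evidently do, writing $H$ for the regular part of $\frac{1}{|x-y|^{n-2}}-H$). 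The key elementary estimate is that for $y\in\partial\Omega$, the points $x$ and $\bar x$ are nearly equidistant from $y$: writing $p_x$ for the foot of the perpendicular, one has $|x-y|^2=d_x^2+|p_x-y|^2+O(d_x|p_x-y|^2)$ and $|\bar x-y|^2=d_x^2+|p_x-y|^2+O(d_x|p_x-y|^2)$ with the curvature corrections agreeing to first order, so that
\begin{equation*}
\bigl||x-y|-|\bar x-y|\bigr|\le C\,d_x\,\frac{|p_x-y|^2}{|x-y|}\le C\,d_x\,|x-y|
\end{equation*}
for $y\in\partial\Omega$ (using $|p_x-y|\le C|x-y|$). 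Hence on $\partial\Omega$,
\begin{equation*}
|w(y)|=\Bigl|\tfrac{1}{|x-y|^{n-2}}-\tfrac{1}{|\bar x-y|^{n-2}}\Bigr|
\le C\,\frac{\bigl||x-y|-|\bar x-y|\bigr|}{|x-y|^{n-1}}\cdot\frac{|x-y|^{n-1}}{|\bar x-y|^{n-1}}
\le \frac{C\,d_x}{|\bar x-y|^{n-2}},
\end{equation*}
where in the last step I also use that $|x-y|\asymp|\bar x-y|$ on $\partial\Omega$.

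Next I would convert this boundary bound into an interior bound via the maximum principle. The obstacle is that the comparison function $y\mapsto d_x\,|\bar x-y|^{-(n-2)}$ is not itself a supersolution on all of $\Omega$, so one cannot apply the maximum principle directly with that barrier. Instead I would proceed in two steps. First, a crude global bound: $w$ is harmonic and bounded on $\partial\Omega$ by $C d_x\eta^{-(n-2)}$ away from $p_x$ and by roughly $C d_x |p_x-y|^{-(n-2)}$ near $p_x$; an integrable boundary datum plus the Poisson kernel representation for the smooth bounded domain $\Omega$ gives $0\le H(x,y)$ (positivity of the regular part is classical, coming from $H(x,\cdot)\ge 0$ on $\partial\Omega$ and harmonicity) and a first bound $H(x,y)\le C|x-y|^{-(n-2)}$. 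Then, to get the sharp constant $1$ and the $d_x$-gain, I would split $\Omega$ into the region $\{|y-p_x|\le \rho\}$ for a fixed small $\rho$ and its complement. On the complement, $|\bar x-y|\asymp|x-y|\asymp 1$ and everything is $O(d_x)$ trivially. On the near region, flatten the boundary by a $C^2$ change of coordinates; in the half-space model $w$ solves a harmonic-type equation with a lower-order perturbation of size $O(\text{curvature})$, has the stated boundary bound, and a scaling/barrier argument with the genuine half-space comparison function $d_x|\bar x-y|^{-(n-2)}$ (which \emph{is} harmonic in the half-space) together with Harnack/gradient estimates for harmonic functions yields \eqref{eq:2}. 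Differentiating the same representation, or applying interior gradient estimates to $w$ on balls of radius $\asymp|\bar x-y|$ after rescaling, gives \eqref{eq:3}; note the gradient bound loses the $d_x$ factor, which is exactly what is stated.

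Finally, \eqref{eq:31} is immediate from \eqref{eq:2} (positivity of $H$ plus the triangle inequality $|H|\le|\bar x-y|^{-(n-2)}+Cd_x|\bar x-y|^{-(n-2)}$), and \eqref{eq:15} follows by combining \eqref{eq:3} for $x\in\Omega_\eta$ with the observation that $|\bar x-y|\ge c|x-y|$ when $y\in\Omega$ (reflection across the boundary does not bring $\bar x$ much closer to interior points than $x$ is), while for $x\in\Omega\setminus\Omega_\eta$ the point $x$ is uniformly far from $\partial\Omega$ and the standard interior estimate $|\nabla_x H(x,y)|\le C|x-y|^{-(n-1)}$ holds by differentiating the Green function away from its singularity. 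I expect the main obstacle to be the careful curvature bookkeeping in the near-boundary region — making precise that after flattening the boundary the error terms are genuinely lower order and do not destroy either the constant $1$ in \eqref{eq:2} or the factor $d_x$ — and this is the step where, as the authors note, one leans on $\Omega$ being $C^2$ (or smooth) so that the reflection $x\mapsto\bar x$ and the distance function are well behaved on $\Omega_{2\eta}$.
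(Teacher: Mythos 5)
Your general framework—comparing $H(x,y)$ with the reflected kernel $|\bar x-y|^{-(n-2)}$ and running a maximum-principle argument in the $y$-variable—matches the paper's setup, and the derivation of the boundary estimate that feeds \eqref{eq:2} is essentially the same as what the cited reference \cite{MR2653899} carries out. One small remark on your worry: the barrier $y\mapsto d_x\,|\bar x-y|^{-(n-2)}$ \emph{is} harmonic on $\Omega$ (since $\bar x\notin\overline\Omega$ for $\eta$ small), so the maximum principle applies to it directly; your detour through boundary-flattening and Poisson kernels is not actually needed for \eqref{eq:2}.

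The real gap is in \eqref{eq:3}. The bound there is on $\nabla_x$, not $\nabla_y$, and the function $\chi(x,y)=H(x,y)-|\bar x-y|^{-(n-2)}$ is \emph{not} harmonic in $x$: the reflection map $x\mapsto\bar x=x-2d_x\nu_x$ is not affine when $\partial\Omega$ is curved, so $\Delta_x\chi\ne0$ in general. Your sketch proposes either differentiating the Poisson representation or ``applying interior gradient estimates to $w$ on balls of radius $\asymp|\bar x-y|$ after rescaling,'' but $w$ is harmonic only in $y$, so the latter gives $\nabla_y$ control, not $\nabla_x$; and the former is not carried out. The paper fills this gap by first proving the auxiliary estimate $|\Delta_x\chi(x,y)|\le C/(d_x|\bar x-y|^{n-2})$ (its Eq. \eqref{eq:1}, by modifying Eq.~(2.8) of \cite{MR2653899}) and then invoking the quantitative interior estimate from \cite[Eq.~(3.15)]{MR737190} on a cube $Q$ of side $\sim d_x$ centered at $x$: that estimate bounds $|\partial_{x_i}\chi|$ by $\tfrac{1}{d_x}\sup_{\partial Q}|\chi|+d_x\sup_Q|\Delta\chi|$, and plugging in \eqref{eq:2} and \eqref{eq:1} makes the $d_x$'s cancel, yielding \eqref{eq:3}. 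Without a substitute for the $\Delta_x\chi$ bound and a gradient estimate valid for non-harmonic functions, your proof of \eqref{eq:3} does not close.

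The deductions of \eqref{eq:31} and \eqref{eq:15} from \eqref{eq:2}–\eqref{eq:3} (splitting into $x\in\Omega_\eta$ versus $x\in\Omega\setminus\Omega_\eta$, the comparability $|\bar x-y|\ge c|x-y|$ for $y\in\Omega$, and the explicit computation of $\nabla_x|\bar x-y|^{-(n-2)}$) are the same as the paper's and are fine.
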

\begin{proof}
  For convenience we set
  \begin{equation*}
    \chi(x,y)\coloneqq H(x,y)-\frac{1}{\abs{\bar x-y}^{n-2}}
  \end{equation*}
  for $x\in\Omega_\eta$ and $y\in\Omega$.  Note that there is
  $c>0$, only dependent on $n$ and $\eta$, such that $\abs{\bar
    x-\bar\xi}\le c\abs{x-\xi}$ if $x\in\Omega_\eta$ and $\xi\in
  \olB(x,d_x/2)$. If moreover $y\in\Omega$, then
  \begin{equation}
    \label{eq:4}
    \frac{\abs{\bar x-y}}{\abs{\bar\xi-y}}
    \le\frac{\abs{\bar x-\bar\xi}+\abs{\bar\xi-y}}{\abs{\bar\xi-y}}
    \le1+\frac{cd_x/2}{\abs{\bar\xi-y}}
    \le 1+c,
  \end{equation}
  since $y\in\Omega$ and $\dist(\bar\xi,\Omega)\ge d_x/2$.
  
  The proof of \eqref{eq:2} is analogous to the proof of Eq.~(2.7) in
  \cite{MR2653899}, with obvious small changes.  Similarly, slight
  modifications of the proof of Eq.~(2.8) in \cite{MR2653899} yield
  \begin{equation}
    \label{eq:1}
    \abs{\Delta_x\chi(x,y)}\le\frac{C}{d_x\abs{\bar x-y}^{n-2}}
  \end{equation}
  for all $x\in\Omega_\eta$ and $y\in\Omega$.  Fix $x,y$, take
  $r\coloneqq2\sqrt{n}$ and set
  \begin{equation*}
    Q\coloneqq\{\xi\in\dR^n\mid\abs{x-\xi}_\infty \le d_x/r\}.
  \end{equation*}
  Note that if $\xi\in Q$ then $\xi\in\olB(x,d_x/2)$ and
  therefore
  \begin{equation}
    d_x/2 \le d_\xi\le3d_x/2.\label{eq:5}
  \end{equation}
  Hence we obtain for $i\in\{1,2,\dots,n\}$
  \begin{align*}
    \abs{\partial_{x_i}\chi(x,y)}
    &\le\frac{rn}{d_x}\sup_{\xi\in\partial
      Q}\abs{\chi(\xi,y)}+\frac{d_x}{2r}\sup_{\xi\in
      Q}\abs{\Delta_\xi\chi(\xi,y)}
    &&\qquad\text{by \cite[Eq.~(3.15)]{MR737190}}\\
    &\le C\xlr(){\sup_{\xi\in \partial
        Q}\frac{d_\xi}{d_x\abs{\bar\xi-y}^{n-2}}
    +\sup_{\xi\in Q}\frac{d_x}{d_\xi\abs{\bar\xi-y}^{n-2}}}
    &&\qquad\text{by \eqref{eq:2} and \eqref{eq:1}}\\
    &\le C\sup_{\xi\in Q}\frac{1}{\abs{\bar\xi-y}^{n-2}}
    &&\qquad\text{by \eqref{eq:5}}\\
    &\le \frac{C}{\abs{\bar x-y}^{n-2}}
    &&\qquad\text{by \eqref{eq:4}}.
  \end{align*}
  Summing up this inequality over $i$ gives \eqref{eq:3}.

  To prove \eqref{eq:15}, note first that there is $C>0$ such that
  \begin{equation}
    \label{eq:14}
   \abs{\nabla_xH(x,y)}\le C 
   \qquad\text{if }
   x\in\Omega\ssm\Omega_\eta,\ y\in\Omega.
  \end{equation}
  The case $x\in\Omega_\eta$ relies on the estimate \eqref{eq:3}.
  Note that there is $C>0$ such that
  \begin{equation}\label{eq:8}
    \frac{\abs{\bar x-y}}{\abs{x-y}}\ge C
    \qquad\text{for all } x\in\Omega_\eta,\ y\in\Omega.
  \end{equation}
  This implies that the term on the right of \eqref{eq:3} is estimated
  by a constant multiple of $1/\abs{x-y}^{n-2}$ if $x\in\Omega_\eta$
  and $y\in\Omega$.  In view of \eqref{eq:14} it therefore remains to
  show that
  \begin{equation}
    \label{eq:21}
    \xabs{\nabla_x\frac{1}{\abs{\bar x-y}^{n-2}}}
    \le
    \frac{C}{\abs{x-y}^{n-1}}
    \qquad x\in\Omega_\eta,\ y\in\Omega
  \end{equation}
  for some constant $C>0$.

  Writing $\partial_i$ for $\partial/\partial x_i$ we calculate as in
  \cite{MR2653899} for any $i\in\{1,2,\dots,n\}$:
  \begin{equation}\label{eq:9}
    \partial_i\frac1{\abs{\bar x-y}^{n-2}}
    =\frac{2-n}{\abs{\bar x-y}^n}\sum_{j=1}^n(\bar{x}_j-y_j)\partial_i\bar{x}_j.
  \end{equation}
  Since $\bar x:=x-2d_x \nu_x $, we find
  \begin{equation*}
    \partial_i\bar x_j=\delta_{ij}-2\nu_{xi}\nu_{xj}-2d_x \partial_i\nu_{xj}.
  \end{equation*}
  Using this representation in \eqref{eq:9} yields
  \begin{equation*}
    \xabs{\partial_i\frac1{\abs{\bar x-y}^{n-2}}}
    \le\frac{C}{\abs{\bar x-y}^{n-1}}(1+d_x \abs{\partial_i\nu_x }).
  \end{equation*}
  By our choice of $\eta$ we have $\abs{d_x }\le\eta$ and
  $\abs{\partial_i\nu_x }\le C$ for all $x\in\Omega_\eta$.  In view of
  \eqref{eq:8} we obtain \eqref{eq:21} and finish the proof.
\end{proof}

Here and in the remaining appendices we employ the notation
\begin{equation*}
  \abs{u}_{A,q}\coloneqq\xlr(){\int_A\abs{u}^q}^{1/q}
\end{equation*}
for measurable $A\subseteq\dR^n$ and $q\in[1,\infty]$.  If $A=\Omega$
we omit it from the notation.

\begin{lemma}\label{lem2}
  Let $\delta,\delta_1,\delta_2\in(0,1]$ and $\xi,\xi_1,\xi_2\in\Omega_\eta$.  Let
  $\bar\xi$ be the reflection point of $\xi$ with respect to
  $\partial\Omega.$ There exists $c>0$ such that
  \begin{equation}
    \label{eq:12}
        0\le PU_{\delta,\xi}(x) \le U_{\delta,\xi}(x)\\
  \end{equation}
  and
  \begin{equation}\label{cru1}
    0\le U_{\delta,\xi}(x)-PU_{\delta,\xi}(x) 
    \le \alpha_n\delta^{\frac{n-2}{2}}H(x,\xi)
    \le c \frac{\delta^{\frac{n-2}{2}}}{|x-\bar \xi|^{n- 2}}
  \end{equation}
  for all $x\in\Omega$.  Moreover
  \begin{equation*}
    R_{\delta,\xi}(x)
    :=PU_{\delta,\xi}(x)-U_{\delta,\xi}(x)+ \alpha_n\delta^{\frac{n-2}{2}}H(x,\xi)
  \end{equation*}
  satisfies
  \begin{equation}\label{cru3}
    \abs{R_{\delta,\xi}}_{\Omega,\infty}
    =O\fracwithdelims(){\delta^{\frac{n+2}{2}}}{\dist(\xi,\partial\Omega)^n}.
  \end{equation}
  Finally, there is $\beta>0$ such that
  \begin{align}\label{cru2}
    \int_\Omega\abs{\nabla PU_{\delta_1,\xi_1}}PU_{\delta_2,\xi_2}
      &=\fracwithdelims(){\delta_1}{\delta_2}^{\frac{n-2}{2}}
      O\xlr(){\delta_2^{\frac{n-2}{n-1}+\beta}}\\
    \abs{\nabla PU_{\delta,\xi}}_{\frac{2n}{n+2}}
    &=O\biglr(){\delta^{\frac{n-2}{2(n-1)}+\beta}}\label{eq:16}
  \end{align}
  as $\delta,\delta_1,\delta_2\to0$, independently of $\xi$, $\xi_1$, and $\xi_2$.
\end{lemma}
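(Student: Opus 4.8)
The plan is to establish the five estimates in Lemma~\ref{lem2} more or less in the order stated, leaning heavily on the boundary estimates for $H$ from Lemma~\ref{lem1}.

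\medskip

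First I would prove \eqref{eq:12} and \eqref{cru1}. Since $W_{\delta,\xi}:=U_{\delta,\xi}-PU_{\delta,\xi}$ is harmonic in $\Omega$ and equals $U_{\delta,\xi}>0$ on $\partial\Omega$, the maximum principle gives $0\le W_{\delta,\xi}\le U_{\delta,\xi}$ on $\Omega$, which is \eqref{eq:12}. For \eqref{cru1}, recall the standard identity obtained by comparing the defining equations of $PU$ and the Green representation of $U$:
\begin{equation*}
  U_{\delta,\xi}(x)-PU_{\delta,\xi}(x)
  =\alpha_n\delta^{\frac{n-2}{2}}\int_{\partial\Omega}\frac{1}{|x-\sigma|^{n-2}}\,
   \frac{\partial}{\partial\nu_\sigma}\Big(\text{something}\Big)\,d\sigma,
\end{equation*}
but the cleaner route, which is the one I would take, is to note that $x\mapsto\alpha_n\delta^{\frac{n-2}{2}}H(x,\xi)-W_{\delta,\xi}(x)$ is harmonic in $\Omega$ and its boundary values are $\alpha_n\delta^{\frac{n-2}{2}}H(\sigma,\xi)-U_{\delta,\xi}(\sigma)$ for $\sigma\in\partial\Omega$; since $G(\sigma,\xi)=0$ forces $\alpha_n\delta^{\frac{n-2}{2}}H(\sigma,\xi)=\frac{\alpha_n\delta^{(n-2)/2}}{n(n-2)\omega_n|\sigma-\xi|^{n-2}}\ge U_{\delta,\xi}(\sigma)$ (the elementary inequality $\delta^2+|\sigma-\xi|^2\ge c|\sigma-\xi|^2$ makes this quantitative but here just nonnegativity is needed), the maximum principle gives the middle inequality of \eqref{cru1}. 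The last inequality of \eqref{cru1} is then immediate from \eqref{eq:31} of Lemma~\ref{lem1}.

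\medskip

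Next I would turn to \eqref{cru3}, the sharp bound on the remainder $R_{\delta,\xi}=PU_{\delta,\xi}-U_{\delta,\xi}+\alpha_n\delta^{(n-2)/2}H(\cdot,\xi)$. The function $R_{\delta,\xi}$ is harmonic in $\Omega$, so by the maximum principle $\abs{R_{\delta,\xi}}_{\Omega,\infty}=\abs{R_{\delta,\xi}}_{\partial\Omega,\infty}$, and on $\partial\Omega$ we have $PU_{\delta,\xi}=0$, so there
\begin{equation*}
  R_{\delta,\xi}(\sigma)=\alpha_n\delta^{\frac{n-2}{2}}H(\sigma,\xi)-U_{\delta,\xi}(\sigma)
  =\alpha_n\delta^{\frac{n-2}{2}}\left[H(\sigma,\xi)-\frac{1}{n(n-2)\omega_n|\sigma-\xi|^{n-2}}\right],
\end{equation*}
using $G(\sigma,\xi)=0$ once more. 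Now write $d:=\dist(\xi,\partial\Omega)$; by Lemma~\ref{lem1}, applied with the roles of $x$ and $y$ interchanged (i.e.\ using the estimate for $H$ regular near $\xi$), $H(\sigma,\xi)-|\bar\xi-\sigma|^{-(n-2)}$ (up to the normalizing constant) is $O(d/|\bar\xi-\sigma|^{n-2})$, while $|\bar\xi-\sigma|^{-(n-2)}-|\xi-\sigma|^{-(n-2)}$ is also controlled because $|\sigma-\xi|\ge d$ and $||\bar\xi-\sigma|-|\xi-\sigma||\le 2d$. Combining, $|R_{\delta,\xi}(\sigma)|\le C\delta^{(n-2)/2}\cdot d/|\sigma-\xi|^{n-2}$ and, since $|\sigma-\xi|\ge d$, this is $\le C\delta^{(n-2)/2}/d^{n-3}$; but the sharper exponent in \eqref{cru3} comes from keeping one more factor of $\delta$: one trades $\delta^{(n-2)/2}/|\sigma-\xi|^{n-2}$ for its worst value over $\partial\Omega$ together with the extra gain $\delta^2/d^2$ that appears when one does not drop the $\delta^2$ in $\delta^2+|\sigma-\xi|^2$. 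I expect this bookkeeping — tracking exactly which powers of $\delta$ and $d$ survive — to be the most delicate routine part, but it is precisely the computation carried out in \cite{MR2653899} and only minor changes are needed.

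\medskip

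Finally, the two integral estimates \eqref{cru2} and \eqref{eq:16}. Here the point is that $\nabla PU_{\delta,\xi}=\nabla U_{\delta,\xi}-\nabla W_{\delta,\xi}$, and by \eqref{cru1} together with \eqref{eq:15} of Lemma~\ref{lem1} we have $|\nabla W_{\delta,\xi}|\le C\alpha_n\delta^{(n-2)/2}|\nabla_x H(x,\xi)|\le C\delta^{(n-2)/2}/|x-\xi|^{n-1}$, while $|\nabla U_{\delta,\xi}(x)|\le C\delta^{(n-2)/2}/(\delta+|x-\xi|)^{n-1}$ trivially; so in both \eqref{cru2} and \eqref{eq:16} the integrand is, up to constants, dominated by $\delta^{(n-2)/2}(\delta+|x-\xi|)^{-(n-1)}$ times the relevant second factor. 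For \eqref{eq:16} one then computes $\int_\Omega\big(\delta^{(n-2)/2}(\delta+|x-\xi|)^{-(n-1)}\big)^{2n/(n+2)}dx$ by scaling $x=\xi+\delta z$; the power of $\delta$ that comes out is exactly $\frac{2n}{n+2}\cdot\frac{n-2}{2(n-1)}$ after accounting for the Jacobian and the fact that the integral over $z$ converges (the exponent $\frac{2n(n-1)}{n+2}$ exceeds $n$ for $n\ge3$), i.e.\ the bound is $O(\delta^{(n-2)/(2(n-1))})$; the strict inequality at the threshold exponent yields an honest $\beta>0$, whose size I would just keep track of through the convergence margin. Estimate \eqref{cru2} is analogous: pull out $\delta_1$ from $|\nabla PU_{\delta_1,\xi_1}|$, bound $PU_{\delta_2,\xi_2}\le U_{\delta_2,\xi_2}$ by \eqref{eq:12}, and use Hölder (or a direct $L^1$ estimate with a scaling argument) to extract the factor $\delta_2^{(n-2)/(n-1)+\beta}$ with a $\beta>0$ again coming from a convergent tail integral. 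None of the constants depend on the centers because all the bounds above are pointwise and the domain of integration can be replaced by all of $\dR^n$ at the cost of enlarging the constant.
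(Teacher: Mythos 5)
Your treatment of \eqref{eq:12}, \eqref{cru1}, and (the idea of) \eqref{cru3} follows the same route the paper indicates (maximum principle plus Lemma~\ref{lem1}), and that part is essentially fine, though your algebra on $\partial\Omega$ in the \eqref{cru3} discussion is muddled: on the boundary $R_{\delta,\xi}(\sigma)=\alpha_n\delta^{(n-2)/2}H(\sigma,\xi)-U_{\delta,\xi}(\sigma)$, and the second term is $\alpha_n\delta^{(n-2)/2}(\delta^2+|\sigma-\xi|^2)^{-(n-2)/2}$, not the fundamental-solution value; it is precisely the difference $|\sigma-\xi|^{-(n-2)}-(\delta^2+|\sigma-\xi|^2)^{-(n-2)/2}=O(\delta^2/|\sigma-\xi|^n)$ that produces the exponent $\delta^{(n+2)/2}/d^n$, not the $O(d/\cdot)$ estimate from \eqref{eq:2}.

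For \eqref{cru2} and \eqref{eq:16}, however, you take a genuinely different route from the paper, and your version has a real gap. You want a pointwise estimate on $\nabla PU_{\delta,\xi}=\nabla U_{\delta,\xi}-\nabla W_{\delta,\xi}$ with $W=U-PU$, and you claim ``by \eqref{cru1} together with \eqref{eq:15}'' that $|\nabla W_{\delta,\xi}|\le C\delta^{(n-2)/2}|\nabla_x H(x,\xi)|$. But \eqref{cru1} is a pointwise bound on $W$, not on $\nabla W$, and there is no maximum principle for gradients; a pointwise majorant for a harmonic function does not control its gradient. Moreover the bound you then write down, $|\nabla W|\le C\delta^{(n-2)/2}/|x-\xi|^{n-1}$, is not dominated by $C\delta^{(n-2)/2}(\delta+|x-\xi|)^{-(n-1)}$ near $x=\xi$, so your claim that ``the integrand is dominated by $\delta^{(n-2)/2}(\delta+|x-\xi|)^{-(n-1)}$ times the second factor'' does not follow from your own estimates; and a bound $|\nabla W|\lesssim\delta^{(n-2)/2}|x-\xi|^{-(n-1)}$ with no softening at $\xi$ is not even in $L^{2n/(n+2)}$ for $n>4$, so the scaling computation you propose for \eqref{eq:16} would be meaningless (you also misstate the range: the exponent $2n(n-1)/(n+2)$ exceeds $n$ only for $n>4$, not for all $n\ge3$). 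The correct pointwise bound places the singularity of $\nabla W$ at the exterior point $\bar\xi$, not at $\xi$, and establishing this rigorously requires differentiating a Green representation, not differentiating \eqref{cru1}. The paper avoids all of this by writing $\nabla PU_{\delta,\xi}(x)=\int_\Omega\nabla_x G(x,y)\,U_{\delta,\xi}^{(n+2)/(n-2)}(y)\,dy$, bounding $|\nabla_x G(x,y)|\le C|x-y|^{-(n-1)}$ uniformly via \eqref{eq:15} and \eqref{eq:17}, and then applying a Young-type convolution inequality (the cited Theorem~4.2 of the Lieb--Loss reference) with a careful choice of exponents; this convolution argument is what produces the explicit margin $\beta>0$ in \eqref{cru2} and \eqref{eq:16} and sidesteps the need for any pointwise gradient estimate on $\nabla W$. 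You should either adopt that representation-plus-Young strategy, or, if you insist on a pointwise route, derive the gradient bound from the Green representation of $W$ itself rather than by differentiating a supremum bound.
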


\begin{proof}
  Estimates \eqref{eq:12}, \eqref{cru1}, and \eqref{cru3} follow
  easily from the maximum principle and Lemma \ref{lem1}.

  Note first that 
  \begin{align}
    \label{eq:13}
    \abs{U_{\delta,\xi}}_q
    &=O\xlr(){\delta^{\frac{n}{q}-\frac{n-2}{2}}}
    &&\qquad\text{if }q>\frac{n}{n-2}\\
    \intertext{and}
    \label{eq:7}
    \abs{U_{\delta,\xi}^{\frac{n+2}{n-2}}}_{q}
    &=O\xlr(){\delta^{\frac{n}{q}-\frac{n+2}{2}}}
    &&\qquad\text{if } q\ge1,
  \end{align}
  as $\delta\to0$, independently of $\xi$.

  Recall that
  \begin{equation}\label{eq:6}
      \nabla PU_{\delta,\xi}(x)
      =\int_\Omega\nabla_x\xlr(){\frac{1}{n(n-2)\omega_n |x-y|^{n-2}}
        -H(x,y)}U_{\delta,\xi}^{\frac{n+2}{n-2}}(y)\dint y
  \end{equation}
  and note that
  \begin{equation}
    \xabs{\nabla_x \frac{1}{|x-y|^{n-2}}}\le \frac{C}{\abs{x-y}^{n-1}}\label{eq:17}.
  \end{equation}
  By \eqref{eq:6}, \eqref{eq:15}, and \eqref{eq:17}, to show
  \eqref{cru2} it suffices to prove
  \begin{equation}
    \label{eq:27}
    \int_\Omega\int_\Omega
    U_{\delta_2,\xi_2}(x)\frac{1}{\abs{x-y}^{n-1}}
    U_{\delta_1,\xi_1}^{\frac{n+2}{n-2}}(y)
    \dint y\dint x
    =\fracwithdelims(){\delta_1}{\delta_2}^{\frac{n-2}{2}}
    O\xlr(){\delta_2^{\frac{n-2}{n-1}+\beta}}.
  \end{equation}
  For simplicity, set $V\coloneqq U_{\delta_1,\xi_1}^{\frac{n+2}{n-2}}$
  and $g(x)\coloneqq1/\abs{x}^{n-1}$.  Set
  $M\coloneqq\diam(\Omega)$.  Pick
  \begin{equation*}
    r\in\xlr(){\frac{n(n-1)}{(n-1)^2+1},\ \frac{n}{n-1}}
  \end{equation*}
  and note that then $r\ge1$ and $r'>n$, where $r'$ denotes the
  conjugate exponent of $r$.  Since $\frac{1}{r'}+\frac{1}{r}+1=2$ it
  follows as in the proof of \cite[Theorem~4.2]{MR1817225} that 
  \begin{multline*}
    \int_\Omega\int_\Omega
    U_{\delta_2,\xi_2}(x)g(x-y)
    V(y)
    \dint y\dint x
    \le\abs{U_{\delta_2,\xi_2}}_{r'}\abs{g}_{B(0,M),r}\abs{V}_1\\
    =O\xlr(){\delta_2^{\frac{n}{r'}-\frac{n-2}{2}}\delta_1^{n-\frac{n+2}{2}}}
    =\fracwithdelims(){\delta_1}{\delta_2}^{\frac{n-2}{2}}
    O\xlr(){\delta_2^{n\xlr(){1-\frac1r}}},
  \end{multline*}
  by \eqref{eq:13} and \eqref{eq:7}.  Here we have used that
  $\abs{g}_{B(0,M),r}$ is finite since $r<n/(n-1)$.  On the other
  hand, $r>n(n-1)/((n-1)^2+1)$ implies that
  \begin{equation*}
    n\xlr(){1-\frac1r}=\frac{n-2}{n-1}+\beta
  \end{equation*}
  for some $\beta>0$, proving \eqref{eq:27} and hence \eqref{cru2}.

  To prove \eqref{eq:16} we proceed similarly.  This time we pick
  \begin{equation*}
    s\in\xlr(){\max\xlr\{\}{1,\ \frac{2n}{n+4}},\ \frac{2n(n-1)}{n^2+2n-4}}
  \end{equation*}
  and define $r$ by
  \begin{equation}
    \label{eq:29}
    \frac1r+\frac1s=1+\frac{n+2}{2n}.
  \end{equation}
  Some basic calculations reveal that $s$ is well defined and that
  \begin{equation}
    \label{eq:30}
    r\in\xlr[){1,\ \frac{n}{n-1}}.
  \end{equation}
  Similarly to the proof of \cite[Theorem~4.2]{MR1817225}, taking
  into account the Remark~(2) following the statement of that theorem,
  we obtain
  \begin{equation*}
   \abs{\nabla PU_{\delta,\xi}}_{\frac{2n}{n+2}}
   \le\abs{g}_{B(0,M),r}\abs{V}_s
    =O\biglr(){\delta^{\frac{n}{s}-\frac{n+2}{2}}} .
  \end{equation*}
  Again we have used that $r<n/(n-1)$ implies that the $r$-norm of $g$
  in the ball of radius $M$ is finite.  Since $s<2n(n-1)/(n^2+2n-4)$,
  there is $\beta>0$ such that
  \begin{equation*}
    \frac{n}{s}-\frac{n+2}{2}=\frac{n-2}{2(n-1)}+\beta,
  \end{equation*}
  proving \eqref{eq:16}.
\end{proof}

\section{An estimate of the error}

To simplify notation, from now on we write
\begin{equation*}
\delta_{i}:=\delta_{i,\epsilon},\qquad\xi_{i}:=\xi_{i,\epsilon},\qquad
U_{i}:=U_{\delta_{i},\xi_{i}}.
\end{equation*}
Next, we estimate the error term defined in \eqref{re}.

\begin{lemma}\label{re1}
It holds true for some $\sigma>0$ that
 \begin{equation*}
    \norm{R_{\si,\di,\ti}} =O\left(\eps^{{\frac{1}{2}}+\sigma}\right).
\end{equation*}
\end{lemma}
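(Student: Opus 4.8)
The plan is to estimate $\norm{R_{\si,\di,\ti}}$ directly from its definition \eqref{re}. Since $i^*$ is bounded from $\mathrm{L}^{\frac{2n}{n+2}}(\Omega)$ into $\mathrm{H}_0^1(\Omega)$ and $\Pi^\perp_{\si,\di,\ti}$ is an orthogonal projection, it suffices to bound
\begin{equation*}
  \norm{V_{\si,\di,\ti}-i^*\bigl[f_\epsilon(V_{\si,\di,\ti})\bigr]}
  \le \norm{V_{\si,\di,\ti}-i^*\bigl[f(V_{\si,\di,\ti})\bigr]}
  + c\,\bigabs{f(V_{\si,\di,\ti})-f_\epsilon(V_{\si,\di,\ti})}_{\frac{2n}{n+2}},
\end{equation*}
and then estimate the two terms on the right separately. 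For the second term one writes $f(s)-f_\epsilon(s)=|s|^{p-2}s\,(1-|s|^{-\epsilon})$, uses $1-|s|^{-\epsilon}=O(\epsilon\,|\log|s||)$ on the range of values taken by $V_{\si,\di,\ti}$, and exploits that $\norm{V_{\si,\di,\ti}}$ is bounded while the $\mathrm{L}^\infty$-norm of $V_{\si,\di,\ti}$ blows up only polynomially in $\delta_i^{-1}=O(\epsilon^{-\frac{n-1}{n-2}})$; since $\epsilon\log\epsilon^{-1}=o(\epsilon^{1/2})$ this contributes $O(\epsilon^{1/2+\sigma})$.

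For the first term, since $i^*$ inverts $-\opdiv(a\nabla\cdot)$, we have $V_{\si,\di,\ti}-i^*[f(V_{\si,\di,\ti})]=i^*\bigl(a^{-1}\opdiv(a\nabla V_{\si,\di,\ti})+f(V_{\si,\di,\ti})\bigr)$ in an appropriate weak sense. Expanding $V_{\si,\di,\ti}=\sum_i(-1)^{\lambda_i}PU_i$ and using that $-\Delta PU_i=f(U_i)=U_i^{\frac{n+2}{n-2}}$ in $\Omega$, the leading Laplacian terms combine and one is left with two kinds of contributions: (i) the genuinely nonlinear mismatch $f\bigl(\sum_i(-1)^{\lambda_i}PU_i\bigr)-\sum_i(-1)^{\lambda_i}f(U_i)$, which splits into a self-interaction part $f(PU_i)-f(U_i)$ controlled via \eqref{cru1}, \eqref{cru3} of Lemma~\ref{lem2} together with the decay estimates \eqref{eq:13}, \eqref{eq:7}, and a cross-interaction part handled by Hölder's inequality and the fact that the mutual interactions of the $PU_i$'s are small because the $\delta_i$ are comparable and the points $\xi_{i,\epsilon}$ stay uniformly separated on a compact set $\mathbf{C}\subset\Lambda$ (except in the aligned case \eqref{pt2}, where separation is of order $\epsilon$ but the Green-function estimates of Appendix~A still give a power gain); and (ii) the terms coming from the fact that $a$ is not constant, i.e. $a^{-1}\nabla a\cdot\nabla PU_i$, which are estimated using \eqref{cru2} and \eqref{eq:16} of Lemma~\ref{lem2}. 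Because $a\in\mathcal{C}^2(\overline\Omega)$ and $\nabla a$ is bounded, these drift terms are $O(\delta_i^{\frac{n-2}{2(n-1)}+\beta})$ in $\mathrm{L}^{\frac{2n}{n+2}}$, which in terms of $\epsilon$ is $O(\epsilon^{\frac{n-1}{n-2}\cdot\frac{n-2}{2(n-1)}+\beta'})=O(\epsilon^{1/2+\beta'})$ for some $\beta'>0$.

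Collecting, every contribution is bounded by a fixed power $\epsilon^{1/2+\sigma}$ with $\sigma>0$, uniformly on compact subsets of $\Lambda$, which is the claim. The main obstacle is bookkeeping the cross-interaction terms in the aligned configuration \eqref{pt2}: there the concentration points collapse at rate $\epsilon$, so one cannot treat $PU_i$ and $PU_j$ as asymptotically orthogonal, and one must instead feed the precise boundary behaviour of $H(x,y)$ from Lemma~\ref{lem1} — in particular the reflection estimate \eqref{eq:2} and \eqref{cru2} — into the Hölder bound to still extract a positive power of $\epsilon$. All the remaining steps are routine modifications of the corresponding computations in \cite{MR2232205} and \cite{MR1817225}, so I would carry out the drift-term and self-interaction estimates quickly and devote the bulk of the argument to the interaction term in case \eqref{ans2}.
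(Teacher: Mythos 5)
Your plan follows essentially the same approach as the paper: the same four-way decomposition into the drift term $\nabla a\cdot\nabla PU_i$ (handled by \eqref{eq:16}), the self-interaction $f(U_i)-f(PU_i)$ (handled by \eqref{cru1}--\eqref{cru3} and H\"older), the cross-interaction $\sum_i f(PU_i)-f(\sum_i PU_i)$ (handled by splitting $\Omega$ with balls $B(\xi_i,\eta)$, $\eta$ as in \eqref{eta}, and feeding in the boundary Green-function estimates in the collapsing case \eqref{ans2}), and the subcriticality gap $f-f_\eps$ giving $O(\eps\abs{\log\eps})$. Your sketch correctly identifies the aligned configuration \eqref{pt2} as the delicate part and names the right tools, so this is consistent with the paper's Lemma~\ref{re1}, though of course the actual power-counting in H\"older (the analogue of \eqref{re1.7}--\eqref{re1.11}) would still need to be carried out to confirm the exponent $\sigma>0$.
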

\begin{proof}
  We estimate $ R_{ \si,\di,\ti}$ in case \eqref{ans2}.  The estimate
  in case \eqref{ans1} is easier and can be obtained after minor
  modifications of this argument.

 From the definition of $i^*$ we deduce that
\begin{equation}\label{re1.1}
  \begin{aligned}
    \norm{R_{\si,\di,\ti}}
    &=O\left(\abs{-\text{div}\left(a(x)\nabla V_{ \si,\di,\ti}\right)-a(x)f_\eps\left(V_{ \si,\di,\ti}\right)}_{\frac{2n}{ n+2}}\right) \\
    &=O\left(\abs{-\nabla a \nabla V_{\si, \di,\ti} -a(x)\Delta V_{ \si,\di,\ti}-a(x)f_\eps\left(V_{\si, \di,\ti}\right)}_{\frac{2n}{ n+2}}\right) \\
    &=O\left( \sum\limits_i\abs{\nabla a \nabla PU_i}_{\frac{2n}{ n+2}}\right) +O\left(\sum\limits_i\abs{a(x)\left[f(U_i)-f(PU_i)\right]}_{\frac{2n}{ n+2}}\right) \\
    &\hspace{2cm}+ O\left(\abs{a(x) [\sum\limits_i f(PU_i)-f(\sum\limits_i PU_i)
        ]}_{\frac{2n}{ n+2}}\right)\\
    &\hspace{2cm}+O\left(\abs{
        a(x)\left[f \left(V_{\si, \di,\ti}\right)-f_\eps\left(V_{\si, \di,\ti}\right)\right]}_{\frac{2n}{ n+2}}\right) \\
    &=:I_1+I_2+I_3+I_4.
  \end{aligned}
\end{equation}

To estimate $I_1$ recall that
$\delta_i=O\xlr(){\epsilon^{\frac{n-1}{n-2}}}$ on compact subsets of
$\Lambda$.  By \eqref{eq:16} we get, for some $\sigma>0$,
\begin{equation}\label{re1.2}
  \abs{\nabla a \nabla PU_i}_{\frac{2n}{ n+2}}
  =O\xlr(){\epsilon^{\frac12+\sigma}}.
\end{equation}

Let us estimate $I_2.$
By \eqref{cru1} for some $\sigma>0$ we obtain
\begin{multline}\label{re1.5}
  \abs{a \left[f(U_i)-f(PU_i)\right]}_{\frac{2n}{ n+2}}\\
  =O\left(\abs{U_i^{\pc-2}(PU_i-U_i)}_{\frac{2n}{ n+2}}\right)+O\left(\abs{|PU_i-U_i|^{\pc-1}}_{\frac{2n}{ n+2}}\right)
   =O\left(\eps^{{\frac{1}{2}}+\sigma}\right),
\end{multline}
because by \eqref{cru1} (using also \eqref{re1.7.2} with $q=(n+2)/4$)
\begin{multline}\label{re1.6}
  \abs{|PU_i-U_i|^{\pc-1}}_{\frac{2n}{ n+2}}
  =\abs{PU_i-U_i}^{\pc-1}_{\frac{2n}{ n-2}}\\
  =\delta_i^{\frac{n+2}{ 2}}O\left(\xabs{{\frac{1}{ |x-\bar\xi_i|^{n-2}}}}^{\pc-1}_{\frac{2n}{ n-2}}\right)
  =O\left(\delta_i^{\frac{n+2}{ 2}}\eps^{-{\frac{n+2}{ 2}}}\right)\end{multline}
and by H\"older's inequality  for some $\sigma>0$  (using also \eqref{re1.7.1} and \eqref{re1.7.2} with $q\sim1$ when $n\le6$ or $q\sim(n+2)/8$ when $n\ge7$)
\begin{equation}\label{re1.7}
  \begin{aligned}
    \abs{U_i^{\pc-2}(PU_i-U_i)}_{\frac{2n}{ n+2}}
    &=\delta_i^{\frac{n-2}{2}} O\left(\abs{U_i}^{\pc-2}_{\frac{8nq}{
          (n-2)(n+2)}}\right)O\left(\xabs{{\frac{1}{
            |x-\bar\xi_i|^{n-2}}}} _{\frac{2nq}{ (q-1)(n+2)}}\right)\\
    &=
    \begin{cases}
      \displaystyle O\left( \left({\frac{\delta_i}{ \epsilon}}\right) ^{
          {\frac{n+2}{2}}-\sigma}\right)
      &\qquad\text{if } n\ge7\\[3ex]
        \displaystyle O\left(\left({\frac{\delta_i}{ \epsilon}}\right) ^{ {n-2}-\sigma} \right)
        &\qquad\text{if } n\le6,
    \end{cases}
  \end{aligned}
\end{equation}
with
\begin{equation}\label{re1.7.1}\abs{U_i}^{\pc-2}_{\frac{8nq}{
      (n-2)(n+2)}}
  =
  \begin{cases}
    O\left(  \delta_i ^2\right)&\qquad\text{if } n\ge7 \text{ and }
      1<q<{\frac{n+2}{8}},\\[2ex]
      O\left(\delta_i ^{{\frac{n+2}{2q}}-2}\right)&\qquad\text{if } n\le6\text{ and } q>{\frac{n+2}{8}}.
\end{cases}
\end{equation}
and
\begin{multline}\label{re1.7.2}\xabs{{\frac{1}{ |x-\bar\xi_i|^{n-2}}}}
  _{\frac{2nq}{ (q-1)(n+2)}}
  =O\left( \eps^{-{\frac{n-6}{2}}-{\frac{n+2}{2q}}}\right)\\
    \text{if } n\ge6\text{ and } q>1\text{ or } n\le5\text{ and
      } 1<q<{\frac{n+2}{6-n}}.
\end{multline}

Let us estimate $I_3.$
We set
\begin{equation}
\label{eta}\eta:=\min\left\{{\rm d}(\xi_1,\partial\Omega),{\rm d}(\xi_2,\partial\Omega),{\frac{|\xi_1-\xi_2|}{2}}\right\} .\end{equation}
We have
\begin{multline}\label{re1.8}
    \biggabs{a(x) \bigglr[]{\sum\limits_i f(PU_i)-f\bigglr(){\sum\limits_i PU_i} }}_{\frac{2n}{ n+2}}\\
  \begin{aligned}
    &
    =O\left(\biggabs{\sum\limits_i f(PU_i)-f\bigglr(){\sum\limits_i PU_i}}_{\Omega\setminus \cup_iB(\xi_i,\eta),{\frac{2n}{ n+2}}}\right)\\
    &
    \hspace{1cm}+O\left(\sum_i\biggabs{f(PU_i)-f\bigglr(){\sum\limits_i PU_i}}_{ B(\xi_i,\eta),{\frac{2n}{ n+2}}}\right)\\
    &\hspace{1cm}+O\bigglr(){\sum_i\sum_{j\not=i}\abs{f(PU_j)}_{
        B(\xi_i,\eta),{\frac{2n}{ n+2}}}},
  \end{aligned}
\end{multline}
because
\begin{multline}\label{re1.9}
  \biggabs{\sum\limits_i f(PU_i)-f\bigglr(){\sum\limits_i
      PU_i}}_{\Omega\setminus \cup_iB(\xi_i,\eta),{\frac{2n}{ n+2}}} \\
  =O\bigglr(){
\sum_i\abs{U_i}^{\pc-1} _{\Omega\setminus  B(\xi_i,\eta),{\frac{2n}{ n-2}}}}
  =O\left(
\sum_i  \left({\frac{\delta_i}{\eta}}\right)^n\right)\end{multline}
and if $j\not=i$
\begin{align} \label{re1.10}
  &\abs{f(PU_j)}_{ B(\xi_i,\eta),{\frac{2n}{ n+2}}} =\abs{U_j}^{\pc-1}_{ B(\xi_i,\eta),{\frac{2n}{ n-2}}} =O\left(
\abs{U_j}^{\pc-1} _{\Omega\setminus  B(\xi_j,\eta),{\frac{2n}{ n-2}}}\right) =O\left(
   \left({\frac{\delta_j}{\eta}}\right)^{\frac{n+2}{2}}\right).\end{align}
   Moreover
 \begin{align*}
  &\biggabs{f(PU_i)-f\bigglr(){\sum\limits_i PU_i}}_{ B(\xi_i,\eta),{\frac{2n}{ n+2}}}\\
  &=O\xlr(){\abs{U_i^{\pc-2} (PU_i-U_i)}_{ B(\xi_i,\eta),{\frac{2n}{ n+2}}}
   }+O\bigglr(){ \sum_{j\not=i}\abs{U_i^{\pc-2}   U_j}_{ B(\xi_i,\eta),{\frac{2n}{ n+2}}}
   }\\
  &+O\left(\abs{| PU_i-U_i |^{\pc-1}}_{ B(\xi_i,\eta),{\frac{2n}{ n+2}}}
   \right)+O\bigglr(){ \sum_{j\not=i}\abs{U_j ^{\pc-1}}_{ B(\xi_i,\eta),{\frac{2n}{ n+2}}}
   }
   \end{align*}
and the first term is estimated in  \eqref{re1.7}, the third term is estimated in  \eqref{re1.6}, the fourth term is estimated in \eqref{re1.10}. The second term is estimated using \eqref{re1.7.1} and \eqref{re1.7.2} (with $q\sim1$ when $n\le6$ or $q\sim(n+2)/8$ when $n\ge7$)  as follows
    \begin{multline}\label{re1.11}
  \abs{U_i^{\pc-2}   U_j}_{ B(\xi_i,\eta),{\frac{2n}{ n+2}}}\\
  \begin{aligned}
    &=\delta_i^{\frac{n-2}{2}}  
    O\left(\abs{U_i}^{\pc-2}_{ B(\xi_i,\eta),{\frac{8nq}{
            (n-2)(n+2)}}} \right)
    O\left(\xabs{{\frac{1}{ |x- \xi_j|^{n-2}}}}_{ B(\xi_i,\eta),{\frac{2nq}{ (q-1)(n+2)}}} \right)   \\
    &=
    \begin{cases}
      \displaystyle O\left( \left({\frac{\delta_i}{ \epsilon}}\right) ^{
          {\frac{n+2}{2}}-\sigma}\right)&\qquad\text{if } n\ge7\\[3ex]
        \displaystyle O\left(\left({\frac{\delta_i}{ \epsilon}}\right) ^{ {n-2}-\sigma} \right)
       &\qquad\text{if } n\le6,
    \end{cases}
  \end{aligned}
\end{multline}
for some $\sigma>0.$

 Arguing exactly as in Proposition 2 of \cite{MR1133750}, we can estimate  the last term $I_4$ by
\begin{equation} \label{re1.12}\bigabs{
 a(x)\left[f \left(V_{ \si,\di,\ti}\right)-f_\eps\left(V_{ \si,\di,\ti}\right)\right]}_{\frac{2n}{ n+2}}= O\left(\epsilon|\ln\epsilon|\right).\end{equation}

\end{proof}
 \section{An estimate of the energy}

It is standard  to prove that
\begin{equation*}
  \widetilde J_\eps({\si,\di,\ti})=J_\eps\left(V_{\si,\di,\ti}\right)+\text{h.o.t.}
\end{equation*}
(see for example \cite{MR2232205} or  \cite{MR1384837}),
so the problem reduces to estimating the leading term $J_\eps\left(V_{\si,\di,\ti}\right).$
We will estimate the leading term in case \eqref{ans2}, because the expansion of the leading term in case \eqref{ans1} is easier and can be deduced from that.
We also assume $\ell=2,$ because with some minor modifications we treat the general case. Therefore, the estimate will be a direct consequence of  Lemma \eqref{lex1} and Lemma \eqref{leq1}.

For future reference we define the constants
\begin{align}\label{g1}
  &\gamma_1=\alpha_n^{\pc}\int\limits_{\rr^n}{\frac{1}{ (1+|y|^2)^n}}dy,\\
\label{g2}
  &\gamma_2=\alpha_n^{\pc}\int\limits_{\rr^n}{\frac{1}{ (1+|y|^2) ^{\frac{n+2}{2}}}}dy,\\
\label{g3}
  &\gamma_3=\alpha_n^{\pc }\int\limits_{\rr^n}{\frac{1}{ (1+|y|^2) ^{n }}}\log {\frac{1}{ (1+|y|^2 )^{\frac{n-2}{2}}}}dy.
\end{align}

We start with the following key estimates.
\begin{lemma}\label{lew1}
The following estimate holds true:
\begin{equation}\label{eq:100}
  \int\limits_{B(\xi_1,\eta)} a(x)U_1 ^{\pc }dx=\gamma_1a(s_1)+\<\nabla a(s_1),\nu(s_1)\>\gamma_1t_1\epsilon+O\left(\epsilon^2\right).
\end{equation}
Here $\eta$ is choosen as in \eqref{eta}.
\end{lemma}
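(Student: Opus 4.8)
The plan is to expand the integral $\int_{B(\xi_1,\eta)}a(x)U_1^{\pc}\,dx$ by separately handling the concentration of $U_1^{\pc}$ and the smooth variation of $a$. First I would recall that $U_1=U_{\delta_1,\xi_1}$ with $\delta_1=\epsilon^{\frac{n-1}{n-2}}d_1$ and $\xi_1=\xi_0+\epsilon t_1\nu(\xi_0)$, so that $\xi_1\in\Omega_\eta$ with $\dist(\xi_1,\partial\Omega)=\epsilon t_1$ and $p_{\xi_1}=\xi_0=:s_1$. After the change of variables $x=\xi_1+\delta_1 y$, the integral becomes $\delta_1^{-n}\cdot\delta_1^n\int a(\xi_1+\delta_1 y)\alpha_n^{\pc}(1+|y|^2)^{-n}\,dy$ over the rescaled ball $B(0,\eta/\delta_1)$, which exhausts $\rr^n$ as $\epsilon\to0$. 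So the leading term is $\gamma_1 a(\xi_1)$ up to exponentially small tail errors coming from replacing $B(0,\eta/\delta_1)$ by $\rr^n$ (here one uses that $\delta_1/\eta\to0$, and $\eta$ itself stays bounded below on compact sets of $\Lambda$).

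The next step is to Taylor-expand $a$ around $s_1=\xi_0$. Since $a\in\cC^2(\overline\Omega)$, write $a(\xi_1+\delta_1 y)=a(\xi_1)+\langle\nabla a(\xi_1),\delta_1 y\rangle+O(\delta_1^2|y|^2)$. The linear term integrates to zero against the radially symmetric weight $(1+|y|^2)^{-n}$, and the quadratic remainder contributes $O(\delta_1^2)=O(\epsilon^{2(n-1)/(n-2)})=o(\epsilon^2)$ once multiplied by $\delta_1^n\cdot\delta_1^{-n}$ — wait, more carefully: the $O(\delta_1^2|y|^2)$ term gives $\delta_1^2\int|y|^2(1+|y|^2)^{-n}\,dy$, which is finite for $n\ge3$ and is $O(\delta_1^2)=O(\epsilon^2)$ (since $\tfrac{2(n-1)}{n-2}\ge 2$). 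Thus $\int_{B(\xi_1,\eta)}a(x)U_1^{\pc}\,dx=\gamma_1 a(\xi_1)+O(\epsilon^2)$. Finally I Taylor-expand $a(\xi_1)=a(\xi_0+\epsilon t_1\nu(\xi_0))=a(\xi_0)+\epsilon t_1\langle\nabla a(\xi_0),\nu(\xi_0)\rangle+O(\epsilon^2)$, again using $a\in\cC^2$, and substitute, using $s_1=\xi_0$, to obtain exactly \eqref{eq:100}.

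The steps are all routine; the only point requiring a little care is bookkeeping of the error orders — in particular verifying that the tail $\int_{|y|>\eta/\delta_1}(1+|y|^2)^{-n}\,dy$ and the $|y|$-weighted tails arising from the linear and quadratic Taylor terms (which could in principle grow) are genuinely $O(\epsilon^2)$ or smaller. For $n\ge3$ one has $\int_{|y|>\rho}|y|^k(1+|y|^2)^{-n}\,dy=O(\rho^{k-n})$ for $k<n$, and with $\rho=\eta/\delta_1\gtrsim\epsilon^{-(n-1)/(n-2)}$ these are far smaller than any power of $\epsilon$; the borderline case is the quadratic interior term, but as noted $\tfrac{2(n-1)}{n-2}\ge 2$ makes it $O(\epsilon^2)$. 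I expect this error accounting to be the main (and only mild) obstacle; everything else is a direct change of variables plus a second-order Taylor expansion of $a$.
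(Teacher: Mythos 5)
Your overall strategy --- change variables $x=\xi_1+\delta_1 y$, Taylor-expand $a$ to second order, let the linear $y$-term vanish by radial symmetry, and finally expand $a(\xi_1)$ around $s_1=\xi_0$ --- is essentially the same as the paper's, which splits $a(x)=a(s_1)+\bigl(a(x)-a(s_1)\bigr)$ and expands directly around $s_1$ via \eqref{lew15}--\eqref{lew17}; the two are equivalent bookkeeping choices.

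There is, however, one genuine error in your tail accounting. You assert that ``$\eta$ itself stays bounded below on compact sets of $\Lambda$,'' but the $\eta$ referenced in the lemma statement is the one from \eqref{eta}, $\eta=\min\{\dist(\xi_1,\partial\Omega),\dist(\xi_2,\partial\Omega),|\xi_1-\xi_2|/2\}$, and each of these quantities is of order $\epsilon$ (since $\xi_i=\xi_0+\epsilon t_i\nu(\xi_0)$). You appear to have confused it with the fixed tubular-neighborhood width $\eta$ of Appendix~A, which is an unrelated constant. Hence $\eta/\delta_1\sim\epsilon^{-1/(n-2)}$, not $\epsilon^{-(n-1)/(n-2)}$, and the $k=0$ tail
$\int_{|y|>\eta/\delta_1}(1+|y|^2)^{-n}\,dy=O\bigl((\delta_1/\eta)^n\bigr)$
is only $O\bigl(\epsilon^{n/(n-2)}\bigr)$ --- a definite power of $\epsilon$, certainly not ``far smaller than any power of $\epsilon$.'' For $n\ge5$ one has $n/(n-2)<2$, so this tail actually exceeds $\epsilon^2$ and the stated $O(\epsilon^2)$ remainder is not reached by your argument. (The paper's own proof records the tail as $O(\delta_1^n/\eta^n)$ in \eqref{lew14}, so this imprecision is shared with the source; and the downstream Propositions \ref{pro4-1}--\ref{pro4} only require an $o(\epsilon)$ error.) What your argument genuinely proves is $O(\epsilon^{1+\sigma})$ for some $\sigma>0$ depending on $n$, which suffices, but you should correct the claim about $\eta$ and rerun the power-counting.
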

\begin{proof}
 We split the left-hand side as
\begin{align}\label{lew13}
  &  \int\limits_{B(\xi_1,\eta)}  a(x) U_1 ^{\pc }dx= \int\limits_{B(\xi_1,\eta)} a(s_1)U_1 ^{\pc }dx+\int\limits_{B(\xi_1,\eta)}\left(a(x)-a(\xi_1)\right)U_1 ^{\pc }dx.\end{align}
We deduce
\begin{align}\label{lew14}
\int\limits_{B(\xi_1,\eta)} a(\xi_1)U_1 ^{\pc }dx=\gamma_1a(\xi_0)+O\left({\frac{\delta_1^n}{\eta^n}}\right).\end{align}

By     the mean value theorem we get
\begin{equation}\label{lew15} 
  \begin{aligned}
    a(\delta _1 y+\xi_1 )-a(\xi_1)&=
    a(\delta _1 y+\eta_1\nu(s_1)+s_1)-a(\xi_0)\\
    &=\<\nabla a(s_1 ),\nu(s_1)\>\eta_1+\delta_1 \<\nabla
    a(s_1),y\>+R(y),
  \end{aligned}
\end{equation}
where $R$ satisfies the uniform estimate
\begin{align}\label{lew16}
|R(y )|\le c\left(\delta_1^2|y|^2+\delta_1\eta_1|y|+\eta_1^2\right)\ \hbox{for any}\ y\in B(0,\eta/\delta_1).\end{align}
Therefore we conclude
\begin{multline}\label{lew17}
   \int\limits_{B(\xi_1,\eta)}\left(a(x)-a(\xi_1)\right)U_1 ^{\pc }dx\\
  \begin{aligned}
    &= \alpha_n^{\pc }\int\limits_{B(0,\eta/\delta_1)}\left[a(\delta y+\eta_1\nu(s_1)+s_1)-a(s_1)\right]{\frac{1}{(1+|y|^2)^{n}}}dy\\
    &=\alpha_n^{\pc }\int\limits_{B(0,\eta/\delta_1)}\left[\<\nabla a( s_1),\nu(s_1)\>\eta_1+\delta_1  \<\nabla a(s_1),y\>+R(y)\right]{\frac{1}{(1+|y|^2)^{n}}}dy\\
    & =\<\nabla a(s_1),\nu(s_1)\>\gamma_1\eta_1+O(\eta_1^2).
  \end{aligned}
\end{multline}

\end{proof}

\begin{lemma}\label{lez1}
The following estimates hold true:
   \begin{equation}
     \int\limits_{  B(\xi_1,\eta) }a(x)    U_1  ^ {\pc-1}\left(
       PU_1-U_1\right)dx
     =-\gamma_2a(s_1)\epsilon \left({\frac{d_1}{2t_1}}\right)^{n-2}+O\left(\epsilon^{1+\sigma}\right) \label{eq:10}
   \end{equation}
and
\begin{multline}
  \int\limits_{  B(\xi_1,\eta) }a(x)   U_1  ^ {\pc-1}PU_2dx \\
  = \left\{
    \begin{aligned}
      &O\left(\epsilon^{1+\sigma}\right)&&\qquad\text{if} s_1\not= s_2,\\
      &\gamma_2a(\xi_0)\epsilon\left( d_1d_2 \right)^{\frac{n-2}{2}}\times\\
      &\hspace{.5cm}\times
      \left({\frac{1}{|t_1-t_2|^{n-2}}}-{\frac{1}{|t_1+t_2|^{n-2}}}\right)+O\left(\epsilon^{1+\sigma}\right)
      &&\qquad\text{if } s_1 = s_2 =\xi_0,\\
    \end{aligned}\right.
  \label{eq:11}
\end{multline}
for some $\sigma>0.$
Here $\eta$ is choosen as in \eqref{eta}.
\end{lemma}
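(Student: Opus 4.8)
The plan is to evaluate both integrals by rescaling to the standard bubble profile and using the estimate \eqref{cru1} from Lemma \ref{lem2}, which says that $PU_\delta{}_{,\xi}-U_\delta{}_{,\xi}$ is, to leading order, $-\alpha_n\delta^{\frac{n-2}{2}}H(x,\xi)$, and the refined boundary estimate \eqref{eq:2} from Lemma \ref{lem1}, which replaces $H(x,\xi)$ by $|\bar x-\xi|^{-(n-2)}$ up to a relative error $O(d_x)$. For the first estimate \eqref{eq:10}, I would write
\[
  PU_1-U_1 = -\alpha_n\delta_1^{\frac{n-2}{2}}H(x,\xi_1)+R_{\delta_1,\xi_1}(x),
\]
insert $H(x,\xi_1)=|\bar x-\xi_1|^{-(n-2)}(1+O(d_x))$ from \eqref{eq:2}, and evaluate the leading term
\[
  -\alpha_n\delta_1^{\frac{n-2}{2}}\int_{B(\xi_1,\eta)}a(x)\,U_1^{\pc-1}(x)\,\frac{dx}{|\bar x-\xi_1|^{n-2}}.
\]
Here I use that on $B(\xi_1,\eta)$ the reflected point satisfies $|\bar x-\xi_1|=|\bar x-p_{\xi_1}|+O(\dots)$ and, since $\xi_1=\xi_0+\eta_1\nu(\xi_0)$ with $\eta_1=\epsilon t_1$, the reflection across $\partial\Omega$ is, to the order needed, a reflection across the tangent hyperplane at $\xi_0$, so $|\bar x-\xi_1|\approx|x-\xi_1+2\eta_1\nu|$. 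After the change of variables $x=\xi_1+\delta_1 y$ and using $U_1^{\pc-1}(\xi_1+\delta_1 y)=\alpha_n^{\pc-1}\delta_1^{-\frac{n+2}{2}}(1+|y|^2)^{-\frac{n+2}{2}}$, the factor $|\bar x-\xi_1|^{-(n-2)}$ becomes $(2\eta_1)^{-(n-2)}(1+O(\delta_1|y|/\eta_1))$, and pulling out $a(\xi_1)=a(s_1)+O(\epsilon)$ the integral collapses to $\gamma_2$ times $\delta_1^{\frac{n-2}{2}}(2\eta_1)^{-(n-2)}=\epsilon\,(d_1/(2t_1))^{n-2}$, up to the claimed $O(\epsilon^{1+\sigma})$; the remainder from $R_{\delta_1,\xi_1}$ is controlled by \eqref{cru3}, and the $O(d_x)$ and $a(x)-a(s_1)$ corrections each contribute a further power of $\epsilon$.

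For \eqref{eq:11} the idea is the same but now with $PU_2$ in place of $PU_1-U_1$. I would use \eqref{eq:12} to bound $PU_2$ by $U_2$, and note that $U_2$ is smooth and of size $O(\delta_2^{\frac{n-2}{2}})$ away from $\xi_2$. In the case $s_1\neq s_2$ the balls $B(\xi_1,\eta)$ and $B(\xi_2,\eta)$ are disjoint and bounded away from each other, so $U_2$ restricted to $B(\xi_1,\eta)$ is bounded by $C\delta_2^{\frac{n-2}{2}}$; combined with $\int_{B(\xi_1,\eta)}a\,U_1^{\pc-1}\,dx=O(\delta_1^{\frac{n-2}{2}})$ this gives $O((\delta_1\delta_2)^{\frac{n-2}{2}})=O(\epsilon^{\frac{2(n-1)}{n-2}})=O(\epsilon^{1+\sigma})$, using $\delta_i=\epsilon^{\frac{n-1}{n-2}}d_i$ and $\frac{2(n-1)}{n-2}>1$. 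In the collapsing case $s_1=s_2=\xi_0$, I would instead write $PU_2(x)\approx U_2(x)-\alpha_n\delta_2^{\frac{n-2}{2}}|\bar x-\xi_2|^{-(n-2)}$ on $B(\xi_1,\eta)$ and, for $x$ near $\xi_1$, expand $U_2(x)\approx\alpha_n\delta_2^{\frac{n-2}{2}}|x-\xi_2|^{-(n-2)}$ (since $|x-\xi_2|\geq|t_1-t_2|\epsilon\gg\delta_2$), so that
\[
  PU_2(x)\;\approx\;\alpha_n\delta_2^{\frac{n-2}{2}}\Bigl(\frac{1}{|x-\xi_2|^{n-2}}-\frac{1}{|\bar x-\xi_2|^{n-2}}\Bigr),
\]
which is essentially the Green's function of the half-space evaluated along the normal line. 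Evaluating this at $x=\xi_1$ (the correction from letting $x$ range over $B(\xi_1,\delta_1\cdot)$ is lower order, because $\delta_1\ll\epsilon$) gives the factor $(2\epsilon)^{-(n-2)}\bigl(|t_1-t_2|^{-(n-2)}-|t_1+t_2|^{-(n-2)}\bigr)$ times $\alpha_n\delta_2^{\frac{n-2}{2}}$; multiplying by $\int_{B(\xi_1,\eta)}a\,U_1^{\pc-1}=a(\xi_0)\alpha_n^{-1}\gamma_2\delta_1^{\frac{n-2}{2}}(1+o(1))$ and using $\delta_1^{\frac{n-2}{2}}\delta_2^{\frac{n-2}{2}}(2\epsilon)^{-(n-2)}=\epsilon(d_1d_2)^{\frac{n-2}{2}}$ produces exactly the stated main term.

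The main obstacle, I expect, is the bookkeeping of error terms: one must verify that every correction — the relative error $O(d_x)=O(\epsilon)$ in the replacement of $H$ by the reflected singularity, the Taylor remainder in $a(x)-a(s_1)$, the difference between the true reflection $\bar x$ and the tangent-plane reflection, the oscillation of $|x-\xi_2|$ as $x$ varies over the rescaled support of $U_1$, and the contribution of $R_{\delta,\xi}$ bounded by \eqref{cru3} — is indeed $o(\epsilon)$, and more precisely $O(\epsilon^{1+\sigma})$ for a uniform $\sigma>0$ on compact subsets of $\Lambda$. Since $\delta_i\sim\epsilon^{(n-1)/(n-2)}$, there is a genuine gap between $\delta_i$ and $\eta_i\sim\epsilon$, and it is exactly this gap (quantified by the exponent $\frac{n-1}{n-2}>1$) that furnishes the saving power $\sigma$; tracking it consistently through the rescaled integrals is the delicate part. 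The argument is otherwise parallel to the corresponding computations in \cite{MR2232205} and \cite{MR1384837}, with the Robin-function estimates of Appendix A (Lemma \ref{lem1}) doing the work needed because the concentration points approach the boundary.
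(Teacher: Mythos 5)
Your proposal follows essentially the same route as the paper's proof: decompose $PU_i-U_i$ via \eqref{cru1}, replace $H$ by the reflected-singularity approximation of Lemma \ref{lem1}, rescale by $\delta_1$ around $\xi_1$, and exploit the scale separation $\delta_i\sim\epsilon^{(n-1)/(n-2)}\ll\epsilon\sim\eta_i$ to control the error terms. Two arithmetic slips are worth flagging, neither of which damages the conclusion: in the $s_1\neq s_2$ case the bound is $(\delta_1\delta_2)^{(n-2)/2}=O(\epsilon^{n-1})$, not $O(\epsilon^{2(n-1)/(n-2)})$ (you dropped the exponent $(n-2)/2$ before substituting $\delta_i=\epsilon^{(n-1)/(n-2)}d_i$), though $n-1>1$ still gives the needed $O(\epsilon^{1+\sigma})$; and in the collapsing case the interaction prefactor is $\epsilon^{-(n-2)}$, coming from $|\xi_1-\xi_2|=\epsilon|t_1-t_2|$ and $|\bar\xi_1-\xi_2|=\epsilon(t_1+t_2)$, not $(2\epsilon)^{-(n-2)}$ — the factor $2\eta_1$ appears only in the self-interaction \eqref{eq:10}, where the relevant distance is $|\xi_1-\bar\xi_1|=2\eta_1$.
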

\begin{proof}
  First we prove~\eqref{eq:10}.  By Lemma \ref{lem1} and Lemma
  \ref{lem2} we get
\begin{multline}\label{lez11}
   \int\limits_{B(\xi_1,\eta )} a(x)U_1^{\pc-1} \left(
     PU_1-U_1\right)dx\\
   \begin{aligned}
     &=  \int\limits_{B(\xi_1,\eta )} a(x)U_1^{\pc-1} \left( -\alpha_n\delta_1^{\frac{n-2}{ 2}}H(x,\xi_1)+R_{\delta_1,\xi_1}\right)dx \\
     & =
     -\alpha_n^{\pc }  \delta_1  ^{n-2} \int\limits_{B(0,\eta
       /\delta_1)}a(\delta_1y+\xi_1) H(
     \delta_1y+\xi_1,\xi_1){\frac{1}{ (1+|y|^2)^{\frac{n+2}{2}}}}dy\\
     &\hspace{3cm}+ O\left( \left({\frac{\delta_1}{\eta_1}}\right)  ^{n} \right) \\
     &=-\alpha_n^{\pc }\delta_1^{n-2}\int\limits_{B(0,\eta /\delta_1)} a(\delta_1y+\xi_1 ){\frac{1}{ |\delta_1y+\xi_1-\bar\xi_1|^{n-2}}} {\frac{1}{ (1+|y|^2)^{\frac{n+2}{2}}}}dy \\
     &\hspace{3cm}+O\left( \left({\frac{\delta_1}{\eta_1}}\right) ^{n-2}\eta_1
     \right)+
     O\left( \left({\frac{\delta_1}{\eta_1}}\right)  ^{n} \right) \\
     & = -\alpha_n^{\pc }\left({\frac{\delta_1}{2\eta_1}}\right)^{n-2}
     {a(s_1) }\int\limits_{\rr^n }
     {\frac{1}{(1+|y|^2)^{\frac{n+2}{2}}}}dy\\
     &\hspace{3cm}+O\left(
       \left({\frac{\delta_1}{\eta_1}}\right) ^{n-1} \right)+ O\left(
       \left({\frac{\delta_1}{\eta_1}}\right) ^{n-2}\eta_1 \right),
   \end{aligned}
\end{multline}
because
\begin{equation*}
  |\delta_1y+\xi_1-\bar\xi_1|=|\delta_1y+2\eta_1 \nu(s_1)|\ge  2 \eta_1 -|\delta_1y|\ge \eta_1\ \hbox{for any} \ y\in B(0,\eta /\delta_1).
\end{equation*}
and by mean value theorem
\begin{equation*}
  a(\delta_1y+\xi_1)=a(s_1)+O\left(\eta_1\right)\ \hbox{and}\ {\frac{1}{ |\delta_1y+\xi_1-\bar\xi_1|^{n-2}}}={\frac{1}{ (2\eta_1)^{n-2}}}+O\left({\frac{\delta_1|y|}{ \eta_1^{n-1}}}\right).
\end{equation*}

Next, we prove~\eqref{eq:11}.
By Lemma \ref{lem2}
\begin{multline}\label{lez111}
   \int\limits_{B(\xi_1,\eta )}  a(x)  U_1^{\pc-1}  PU_2dx\\
   \begin{aligned}
     &=\int\limits_{B(\xi_1,\eta )}    a(x) U_1^{\pc-1} \left(U_2-\alpha_n\delta_2^{\frac{n-2}{2}}H(x,\xi_2)+R_{\delta_2,\xi_2}\right)dx \\
     & =\alpha_n^{\pc }
     (\delta_1\delta_2)^{\frac{n-2}{2}}\int\limits_{B(0,\eta
       /\delta_1)}a(\delta_1y+\xi_1){\frac{1}{(1+|y|^2)^{\frac{n+2}{2}}}}
     \times\\
     &\hspace{1cm} \times \left({\frac{1}{(\delta_2^2+|\delta_1y+\xi_1-\xi_2|^2)^{\frac{n-2}{2}}}}dy - H(\delta_1y+\xi_1,\xi_2) \right)dy \\
     &\hspace{1cm}+O\left(   (\delta_1\delta_2)^{\frac{n-2}{2}} {\frac{\delta_2^{\frac{n+2}{2}} }{\eta_2 ^{n } }}\right) \\
     &=\alpha_n^{\pc } (\delta_1\delta_2)^{\frac{n-2}{2}}\int\limits_{B(0,\eta /\delta_1)}a(\delta_1y+\xi_1){\frac{1}{(1+|y|^2)^{\frac{n+2}{2}}}}\times \\
     &\hspace{1cm} \times
     \left({\frac{1}{(\delta_2^2+|\delta_1y+\xi_1-\xi_2|^2)^{\frac{n-2}{2}}}}-{\frac{1}{ |\delta_1y+\xi_1-\bar\xi_2| ^{n-2 }}}\right)dy\\
     & \hspace{1cm}+
     O\left(  {\frac{(\delta_1\delta_2)^{\frac{n-2}{2}}  }{\eta ^{n-2} }} \eta_2  \right) +O\left(   (\delta_1\delta_2)^{\frac{n-2}{2}} {\frac{\delta_2^{\frac{n+2}{2}} }{\eta_2 ^{n } }}\right)   \\
     & =\alpha_n^{\pc } (\delta_1\delta_2)^{\frac{n-2}{2}}a(\xi_0)
     \left({\frac{1}{ | \eta_1-\eta_2| ^{n-2 }}}-{\frac{1}{ |
           \eta_1+\eta_2| ^{n-2 }}}\right) \int\limits_{\rr^n}
     {\frac{1}{(1+|y|^2)^{\frac{n+2}{2}}}}
     dy\\
     & \hspace{1cm}+O\left( {\frac{(\delta_1\delta_2)^{\frac{n-2}{2}} }{\eta
           ^{n-1} }} \right) +O\left( {\frac{
           (\delta_1\delta_2)^{\frac{n-2}{2}} }{\eta ^{n -1}
         }}\delta_1\right)
     +O\left(   (\delta_1\delta_2)^{\frac{n-2}{2}} {\frac{\delta_2^{\frac{n+2}{2}} }{\eta_2 ^{n } }}\right)  , 
   \end{aligned}
    \end{multline}
because for any $\ y\in B(0,\eta /\delta_1)$ we have
\begin{equation*}
  |\delta_1y+\xi_1-\bar\xi_2|=|\delta_1y+(\eta_1 +\eta_2)\nu(\xi_0)|\ge    \eta_1 +\eta_2-|\delta_1y|\ge \eta 
\end{equation*}
\begin{equation*}
  |\delta_1y+\xi_1- \xi_2|  \ge    |\xi_1- \xi_2|-|\delta_1y|\ge \eta  
\end{equation*}
and by mean value theorem
 $a(\delta_1y+\xi_1)=a(\xi_0)+O\left(\eta_1\right)$
 and
\begin{align*}
   &{\frac{1}{(\delta_2^2+|\delta_1y+\xi_1-\xi_2|^2)^{\frac{n-2}{2}}}}-{\frac{1}{ |\delta_1y+\xi_1-\bar\xi_2| ^{n-2 }}}\\
  &=
 {\frac{1}{ |\eta_1-\eta_2| ^{n-2 }}}-{\frac{1}{|\eta_1+\eta_2| ^{n-2 }}}+
 O\left({\frac{\delta_1|y|+\delta_2^2}{ \eta ^{n-1}}}\right) .
\end{align*}
 \end{proof}

\begin{lemma}\label{lex1}
The following estimate holds true:
\begin{multline*}
  J_0\left( V_{\si,\di,\ti}\right)
  =  {\frac{2-p}{2p}}\left[2\gamma_1a(\xi_0)+\gamma_1\<\nabla a(\xi_0),\nu(\xi_0)\>\epsilon(t_1+t_2)\right] 
  +{\frac{1}{2}}\gamma_2a(\xi_0)\times\\
  \times\left[\left({\frac{d_1}{2t_1}}\right)^{n-2}+\left({\frac{d_2}{2t_2}}\right)^{n-2}
+2\left(d_1d_2\right)^{\frac{n-2}{2}}\left({\frac{1}{|t_1-t_2|^{n-2}}}-{\frac{1}{|t_1+t_2|^{n-2}}}\right)\right]\epsilon\\
   +O\left(\epsilon^{1+\sigma}\right),
\end{multline*}
for some $\sigma>0.$
\end{lemma}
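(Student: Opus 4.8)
The plan is to expand the energy functional $J_0$ evaluated at the two-bubble ansatz $V_{\si,\di,\ti}=PU_1-PU_2$ by splitting it into its quadratic (Dirichlet) part and its nonlinear part, localizing both to the balls $B(\xi_i,\eta)$ and a negligible exterior region, and then feeding in the pointwise estimates already established in Lemma~\ref{lem1}, Lemma~\ref{lem2} and the integral identities of Lemma~\ref{lew1} and Lemma~\ref{lez1}. Since $PU_i$ solves $-\Delta PU_i=-\Delta U_i=f(U_i)$ with Dirichlet data, integration by parts converts the quadratic form $\tfrac12\int_\Omega a|\nabla V|^2$ into $\tfrac12\int_\Omega a\,(f(U_1)-f(U_2))(PU_1-PU_2)$ modulo a term involving $\nabla a\cdot\nabla V$ times $V$, which is of order $\epsilon^{1+\sigma}$ by the gradient estimates \eqref{cru2} and \eqref{eq:16} in Lemma~\ref{lem2}. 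So both the quadratic part and the potential part $-\tfrac1{p}\int_\Omega a\,|V|^{p}$ reduce, up to $O(\epsilon^{1+\sigma})$, to combinations of the basic building blocks
\[
\int_{B(\xi_i,\eta)}a\,U_i^{p}, \qquad
\int_{B(\xi_i,\eta)}a\,U_i^{p-1}(PU_i-U_i), \qquad
\int_{B(\xi_i,\eta)}a\,U_i^{p-1}PU_j\ (j\neq i),
\]
plus fully negligible cross terms on $\Omega\setminus\bigcup_i B(\xi_i,\eta)$ (controlled as in \eqref{re1.9}--\eqref{re1.10}) and terms arising from expanding $|PU_1-PU_2|^{p}$ around $PU_1^{p}+PU_2^{p}$ in each ball (the interaction correction, which contributes only at order $\epsilon^{1+\sigma}$ or gets absorbed into the $U_i^{p-1}PU_j$ terms).

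First I would write $J_0(V_{\si,\di,\ti})=\tfrac12\|V\|^2-\tfrac1{p}|V|_p^p$ with the $a$-weighted inner product and $p=\tfrac{2n}{n-2}$, insert $\|V\|^2=\sum_{i}\|PU_i\|^2-2(PU_1,PU_2)$, and use the identity $\|PU_i\|^2=\int_\Omega a\,f(U_i)PU_i+O(\epsilon^{1+\sigma})$ and $(PU_1,PU_2)=\int_\Omega a\,f(U_1)PU_2+O(\epsilon^{1+\sigma})$. Then I would expand $\int_\Omega a\,f(U_i)PU_i=\int a\,U_i^{p}+\int a\,U_i^{p-1}(PU_i-U_i)(1+o(1))$ over $B(\xi_i,\eta)$, discarding the exterior. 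Next, for the potential term, expand $|PU_1-PU_2|^{p}$ ball-by-ball: in $B(\xi_i,\eta)$ the dominant term is $PU_i^{p}$, and using $PU_i=U_i-\alpha_n\delta_i^{(n-2)/2}H(\cdot,\xi_i)+R_{\delta_i,\xi_i}$ one gets $\int a\,PU_i^{p}=\int a\,U_i^{p}+p\int a\,U_i^{p-1}(PU_i-U_i)+O(\epsilon^{1+\sigma})$. Collecting, the $\int a\,U_i^{p}$ terms combine with coefficient $\tfrac12-\tfrac1{p}=\tfrac{p-2}{2p}$ (hence the leading $\tfrac{2-p}{2p}\,[2\gamma_1 a(\xi_0)+\dots]$ with the sign as stated, once we observe that the Dirichlet and potential contributions of $\int a U_i^p$ enter with opposite signs), and the $\int a\,U_i^{p-1}(PU_i-U_i)$ terms combine with coefficient $\tfrac12$. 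Finally substitute \eqref{eq:100} from Lemma~\ref{lew1} for $\int_{B(\xi_i,\eta)}a\,U_i^{p}$ and \eqref{eq:10}, \eqref{eq:11} from Lemma~\ref{lez1} for $\int_{B(\xi_i,\eta)}a\,U_i^{p-1}(PU_i-U_i)$ and for the interaction integral $\int a\,U_1^{p-1}PU_2$, recalling $\eta_i=\epsilon t_i$ and $\delta_i=\epsilon^{(n-1)/(n-2)}d_i$, so that $(\delta_i/\eta_i)^{n-2}=\epsilon\,(d_i/t_i)^{n-2}$ and $(\delta_1\delta_2)^{(n-2)/2}/|\eta_1\mp\eta_2|^{n-2}=\epsilon\,(d_1d_2)^{(n-2)/2}|t_1\mp t_2|^{-(n-2)}$, matching exactly the claimed expression with $\gamma_1=\gamma_1$ and $\gamma_2=\gamma_2$ as in \eqref{g1}--\eqref{g2}.

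The bookkeeping of signs and of the coefficient of the interaction term is the one genuinely delicate point: with the alternating ansatz $V=PU_1-PU_2$, the cross term in $\|V\|^2$ is $-2(PU_1,PU_2)$, while in $|V|_p^p=\int a\,|PU_1-PU_2|^p$ the localized expansion in $B(\xi_1,\eta)$ produces a term $-p\,PU_1^{p-1}PU_2$ (from differentiating $|s|^p$), and one must verify that the sign of $\gamma_2 a(\xi_0)(d_1d_2)^{(n-2)/2}(|t_1-t_2|^{-(n-2)}-|t_1+t_2|^{-(n-2)})$ comes out positive with the displayed $+2(\dots)$ coefficient. I expect this sign/coefficient tracking — keeping straight which contributions enter $J_0$ with which sign after integration by parts and after the binomial expansion of $|PU_1-PU_2|^p$ — to be the main obstacle, while all the size estimates needed to justify that the remainders are $O(\epsilon^{1+\sigma})$ are already in hand from Lemmas~\ref{lem1}, \ref{lem2}, \ref{re1} and the computations \eqref{re1.7}--\eqref{re1.12}. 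Once these are assembled the claimed formula follows; in the general case $\ell>2$ one repeats the same steps and sums over pairs $i\neq j$, which is why restricting to $\ell=2$ costs no generality.
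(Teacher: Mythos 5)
Your plan is essentially identical to the paper's: split $J_0=\tfrac12\|V\|^2-\tfrac1p|V|_p^p$, integrate by parts so that $\|PU_i\|^2=\int a U_i^{p-1}PU_i+O(\epsilon^{1+\sigma})$ (the error being the $\int\langle\nabla a,\nabla PU_i\rangle PU_i$ term, controlled by \eqref{cru2}), localize to the balls $B(\xi_i,\eta)$, reduce everything to the building blocks $\int_B aU_i^p$, $\int_B aU_i^{p-1}(PU_i-U_i)$, and $\int_B aU_i^{p-1}PU_j$, then feed in Lemmas \ref{lew1} and \ref{lez1} together with the scaling relations $(\delta_i/2\eta_i)^{n-2}=\epsilon(d_i/2t_i)^{n-2}$, etc. That is exactly the paper's structure (\eqref{lex10}--\eqref{ley17}).

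The one genuine issue is the coefficient bookkeeping, which you correctly flag as the delicate part but do not actually execute, and where your stated numbers do not all survive scrutiny. The combined coefficient of $\int_B aU_i^{p-1}(PU_i-U_i)$ is $\tfrac12$ from the Dirichlet part and $-1$ from the potential part (Taylor expanding $|U_i+(PU_i-U_i\mp PU_j)|^p$ contributes $pU_i^{p-1}(PU_i-U_i)$, which after the factor $-1/p$ gives $-1$), so the net is $-\tfrac12$, not $+\tfrac12$ as you wrote; it is only because Lemma \ref{lez1} gives $\int_B aU_1^{p-1}(PU_1-U_1)=-\gamma_2 a(s_1)\epsilon(d_1/2t_1)^{n-2}+\dots$ that the final sign in the statement is $+\tfrac12\gamma_2$. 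Likewise the interaction $\int aU_1^{p-1}PU_2$ enters with $-1$ from $-(PU_1,PU_2)$ and with $+1$ from each of the two balls, for a net $+1$ (which accounts for the factor $2$ inside the bracket of the statement); you defer this check rather than carrying it out. Finally, your observation that the leading coefficient is $\tfrac12-\tfrac1p=\tfrac{p-2}{2p}$ is correct, and the $\tfrac{2-p}{2p}$ in the displayed statement appears to be a typographical error in the paper; your parenthetical attempt to reconcile your sign with the stated one does not actually do so and should simply be dropped. Apart from these unresolved sign computations — which are the crux of the lemma — the decomposition, the localization, the appeals to Lemmas \ref{lew1}, \ref{lez1}, \ref{lem2} and the size estimates from Appendix B are all the right ingredients and match the paper's proof.
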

\begin{proof}
\begin{align}\label{lex10}
  &J_0\left( V_{\si,\di,\ti}\right)={\frac{1}{2}}\int\limits_\Omega a(x)|\nabla V_{\si,\di,\ti}|^2dx -{\frac{1}{p}}\int\limits_\Omega a(x)|  V_{\si,\di,\ti}|^pdx\end{align}

\medskip
We estimate the first term at the R.H.S. of \eqref{lex10}.
We write
\begin{multline}\label{lex11}
\int\limits_\Omega a(x)|\nabla V_{ \di,\ti}|^2dx\\
=\int\limits_\Omega a(x)|\nabla PU_1|^2dx+\int\limits_\Omega a(x)|\nabla PU_2|^2dx-2\int\limits_\Omega a(x)\nabla PU_1\nabla PU_2dx
\end{multline}

Let us estimate the first term in \eqref{lex11}. The estimate of the second term   is similar.
Let us choose $\eta$ as in \eqref{eta}.
We get
\begin{equation}\label{lex12}
  \begin{aligned}
     \int\limits_\Omega a(x)|\nabla PU_1|^2dx
     &= -\int\limits_\Omega \opdiv\left(a(x) \nabla PU_1\right)PU_1dx\\
    &=
    - \int\limits_\Omega \left(a(x)\Delta PU_1\right)PU_1dx- \int\limits_\Omega\<\nabla a,  \nabla PU_1 \>PU_1dx\\
    &= \int\limits_\Omega a(x) U_1^{\pc-1}PU_1dx - \int\limits_\Omega\<\nabla a,  \nabla PU_1 \>PU_1dx\\
    &= \int\limits_{ B(\xi_1,\eta )} a(x) U_1^{\pc-1}PU_1dx+
    \int\limits_{\Omega\setminus B(\xi_1,\eta )} a(x)
    U_1^{\pc-1}PU_1dx\\
    &\hspace{4.5cm}- \int\limits_\Omega\<\nabla a, \nabla PU_1
    \>PU_1dx
  \end{aligned}
\end{equation}
 By \eqref{cru2} we  deduce for some $\beta,\sigma>0$
 \begin{equation}\label{lex13.1}
     \int_\Omega\<\nabla a, \nabla PU_1 \>PU_1dx
     \le C\int_\Omega\abs{\nabla PU_1}PU_1dx\\
     =O\xlr(){\delta_1^{\frac{n-2}{n-1}+\beta}}
     =O\xlr(){\epsilon^{1+\sigma}}.
\end{equation}

 By Lemma \ref{lem2} we also deduce

\begin{align}\label{lex13}
 \int\limits_{\Omega\setminus B(\xi_1,\eta )} a(x)  U_1^{\pc-1}PU_1dx=O\left(\left({\frac{\delta_1}{\eps  }}\right)^n\right)
\end{align}
and
\begin{align}\label{lex14}
  &\int\limits_{B(\xi_1,\eta )}  a(x)U_1^{\pc-1}PU_1dx=    \int\limits_{B(\xi_1,\eta )}  a(x)U_1^{\pc }dx +\int\limits_{B(\xi_1,\eta )}  a(x)U_1^{\pc-1}\left(PU_1-U_1\right)dx.
\end{align}
  The first term is estimated in Lemma \ref{lew1} and the second term
  is estimated in \eqref{eq:10} of Lemma \ref{lez1}.

It remains only to estimate the last term in \eqref{lex11}.

\begin{equation}\label{lex18.1}
  \begin{aligned}
    \int\limits_\Omega a(x)\nabla PU_1\nabla PU_2dx
    &=-\int\limits_\Omega \opdiv\left(a\nabla PU_1\right) PU_2dx\\
    & =-\int\limits_\Omega \left(a\Delta PU_1\right) PU_2dx
    -\int\limits_\Omega  \<\nabla a,\nabla PU_1\> PU_2dx\\
    & =\int\limits_\Omega a(x) U_1^{\pc-1} PU_2dx-\int\limits_\Omega
    \<\nabla a,\nabla PU_1\> PU_2dx .
  \end{aligned}
\end{equation}

We have
\begin{align}\label{lex18}
  & \int\limits_\Omega  a(x) U_1^{\pc-1}  PU_2dx =\int\limits_{B(\xi_1,\eta )}\dots+\int\limits_{\Omega\setminus B(\xi_1,\eta )}\dots
\end{align}
and
\begin{multline}\label{lex19}
    \int\limits_{\Omega\setminus B(\xi_1,\eta )} a(x) U_1^{\pc-1}
    PU_2dx\\
  \begin{aligned}
    &=O\left({\delta_1^{\frac{n+2}{2}}\delta_2^{\frac{n-2}{2}} } \int\limits_{\Omega\setminus B(\xi_1,\eta )}{\frac{1}{|x-\xi_1|^{n+2}}}{\frac{1}{|x-\xi_2|^{n-2}}}dx\right)\\
    &
    =O\left({\frac{\delta_1^{\frac{n+2}{2}}\delta_2^{\frac{n-2}{2}}}{\eta^n
        }} \int\limits_{\rr^n\setminus B(0,1
        )}{\frac{1}{|y|^{n+2}}}{\frac{1}{|y+{\frac{\xi_1-\xi_2}{\eta}}|^{n-2}}}dy\right)\\
    &=O\left({\frac{\delta_1^{\frac{n+2}{2}}\delta_2^{\frac{n-2}{2}}}{\eta^n }}\right)
  \end{aligned}
\end{multline}

 The first term in \eqref{lex18} is estimated in \eqref{eq:11} of Lemma \ref{lez1}.

Finally,  as in the proof of \eqref{lex13.1}, from
\eqref{cru2} we obtain
\begin{equation}\label{lex20}
     \int_\Omega\<\nabla a, \nabla PU_2 \>PU_1dx
     =O\xlr(){\epsilon^{1+\sigma}},
\end{equation}
since $0<C_1\le\delta_2/\delta_1\le C_2$ on compact subsets of $\Lambda$.

 \medskip
We estimate the second term at the R.H.S. of \eqref{lex10}.
We write
\begin{equation}\label{ley11}
  \begin{aligned}
    \int\limits_\Omega  a(x)|  V_{ \di,\ti}|^{\pc }dx
    &=\int\limits_\Omega  a(x)|  PU_1-PU_2|^{\pc }dx\\
    &
    =\int\limits_\Omega  a(x)\left(|  PU_1-PU_2|^{\pc }-|   U_1 |^{\pc }-|    U_2|^{\pc }\right)dx\\
    & \hspace{3cm}+\int\limits_\Omega a(x)\left( | U_1 |^{\pc } +| U_2|^{\pc
      }\right)dx.
  \end{aligned}
\end{equation}

The last two terms in \eqref{ley11} are estimated in Lemma \ref{lew1}. Let us choose $\eta$ as in \eqref{eta}.

We split the first integral as
\begin{multline}\label{ley12}
    \int\limits_\Omega  a(x)\left(|  PU_1-PU_2|^{\pc }-|   U_1 |^{\pc }-|    U_2|^{\pc }\right)dx\\
=\int\limits_{B(\xi_1,\eta)}\dots
+\int\limits_{B(\xi_2,\eta)}\dots
+\int\limits_{\Omega\setminus\left(B(\xi_1,\eta)\cup B(\xi_2,\eta)\right)}\dots
\end{multline}

From Lemma \ref{lem2} we deduce

\begin{multline}\label{ley13}
     \int\limits_{\Omega\setminus\left(B(\xi_1,\eta)\cup B(\xi_2,\eta)\right)}  a(x)\left(|  PU_1-PU_2|^{\pc }-|   U_1 |^{\pc }-|    U_2|^{\pc }\right)dx\\
  =O\left( \int\limits_{\Omega\setminus\left(B(\xi_1,\eta)\cup B(\xi_2,\eta)\right)}  \left(    U_1  ^{\pc }+ U_2 ^{\pc }\right)dx\right)=O\left({\frac{\delta_1^n}{\eta^n}}+{\frac{\delta_2^n}{\eta^n}}\right).
\end{multline}

We now estimate the integral over $B(\xi_1,\eta)$.
\begin{multline}\label{ley14}
    \int\limits_{  B(\xi_1,\eta) }  a(x)\left(|  PU_1-PU_2|^{\pc }-|   U_1 |^{\pc }-|    U_2|^{\pc }\right)dx\\
    \begin{aligned}
      &=
      \pc  \int\limits_{  B(\xi_1,\eta) }  a(x)    U_1  ^ {\pc-1}\left(  PU_1-U_1 - PU_2\right) dx\\
      &
      +{\frac{(\pc-1)\pc}{2}} \int\limits_{  B(\xi_1,\eta) }  a(x)   | U_1+\theta\left( PU_1-U_1-PU_2\right)|^{\pc-2} \left(  PU_1-U_1-PU_2\right)^2dx\\
      &
      -\int\limits_{  B(\xi_1,\eta) }  a(x) |    U_2|^{\pc } dx\\
      &= \pc \int\limits_{ B(\xi_1,\eta) } a(x) U_1 ^ {\pc-1}\left(
        PU_1-U_1-PU_2\right)dx+I,
    \end{aligned}
\end{multline}
where $I $ is defined and estimated as
\begin{equation}\label{ley15}
  \begin{aligned}
    & I := {\frac{(\pc-1)\pc }{2}} \int\limits_{ B(\xi_1,\eta) } a(x)
    | U_1+\theta\left( PU_1-U_1-PU_2\right)|^{\pc-2} \left(
      PU_1-U_1-PU_2\right)^2dx
    \\
    &-\int\limits_{  B(\xi_1,\eta) }  a(x) |    U_2|^{\pc } dx\\
    &= O\left(\int\limits_{ B(\xi_1,\eta) } U_1 ^{\pc-2} \left(
        PU_1-U_1 \right)^2dx\right)+O\left(\int\limits_{ B(\xi_1,\eta)
      } U_1 ^{\pc-2} U_2 ^2dx\right)
    \\
    &+ O\left(\int\limits_{  B(\xi_1,\eta) }|PU_1-U_1 |^{\pc }\right)+O\left(\int\limits_{  B(\xi_1,\eta) }  |    U_2|^{\pc } dx\right)\\
    &= O\left(\abs{U_1 ^{\pc-2} \left( PU_1-U_1
        \right)}_{B(\xi_1,\eta),{\frac{2n}{
            n+2}}}\abs{PU_1-U_1}_{B(\xi_1,\eta),{\frac{2n}{
            n-2}}}\right)
    \\
    &+O\left(\abs{U_1 ^{\pc-2} U_2}_{B(\xi_1,\eta),{\frac{2n}{
            n+2}}}\abs{U_2}_{B(\xi_1,\eta),{\frac{2n}{ n-2}}}\right)
    \\
    &+O\left(\abs{PU_1-U_1}^{\pc }_{B(\xi_1,\eta),{\frac{2n}{ n-2}}}\right) +O\left(\abs{U_2}^{\pc }_{B(\xi_1,\eta),{\frac{2n}{ n-2}}}\right)\\
    & =O\left(\eps^{1+\sigma}\right),
  \end{aligned}
\end{equation}
for some $\sigma>0$, because of  estimates \eqref{re1.6}, \eqref{re1.7}, \eqref{re1.10} and \eqref{re1.11}.

The first term in \eqref{ley14} is estimated in \eqref{eq:10} and \eqref{eq:11} of Lemma \ref{lez1}.

Finally, we estimate the integral over $B(\xi_2,\eta)$
\begin{equation}\label{ley17}
  \begin{aligned}
    &   \int\limits_{  B(\xi_2,\eta) }  a(x)\left(|  PU_1-PU_2|^{\pc }-|   U_1 |^{\pc }-|    U_2|^{\pc }\right)dx\\
    &=
    -\pc  \int\limits_{  B(\xi_2,\eta) }  a(x)    U_2  ^ {\pc-1}\left( -PU_2+U_2+PU_1\right)dx\\
    &
    +{\frac{(\pc-1)\pc }{2}} \int\limits_{  B(\xi_2,\eta) }  a(x)   | U_1+\theta\left( -PU_2+U_2+PU_1\right)|^{\pc-2}\left( -PU_2+U_2+PU_1\right)^2dx\\
    &
    -\int\limits_{  B(\xi_2,\eta) }  a(x) |    U_1|^{\pc } dx\\
    &= \pc \int\limits_{ B(\xi_2,\eta) } a(x) U_2 ^ {\pc-1}\left(
      PU_2-U_2-PU_1\right)dx+J ,
  \end{aligned}
\end{equation}
where $J $ is estimated exactly as  in \eqref{ley15},
while the first term  in \eqref{ley17} is estimated in \eqref{eq:10}
and \eqref{eq:11} of Lemma \ref{lez1}.

\medskip
We collect all the previous estimates  and we get the claim.
\end{proof}

\begin{lemma}\label{leq1}
The following estimate holds true:
\begin{multline}
\frac{1}{ p-\eps}\int\limits_\Omega a(x)|V_{\si, \di,\ti}|^{\pc
  -\eps}dx
  =\frac{1}{ p }\int\limits_\Omega a(x)|V_{\si, \di,\ti}|^{\pc  }dx\\
\begin{aligned}
  &\hspace{1cm}+   \eps\left[{\frac{ 1}{ \pc ^2}}\int\limits_\Omega a(x)|V_{\si, \di,\ti}|^{\pc }dx-{\frac{ 1}{  \pc  }}\int\limits_\Omega a(x)|V_{\si, \di,\ti}|^{\pc-1}\log |V_{\si, \di,\ti}|dx\right]+o(\eps)\\
  &= \left[a(s_1)+a(s_2)\right]\left({\frac{\gamma_1}{\pc
        ^2}}-{\frac{\gamma_1\alpha_n}{ \pc }}-{\frac{\gamma_3}{ \pc }}
  \right)\eps\\
  &\hspace{4cm}+{\frac{n-2}{2\pc }} \gamma_1 \left[a(s_1)\log\delta_1+
    a(s_2)\log\delta_2\right]\eps+o(\eps).
\end{aligned}
\end{multline}
\end{lemma}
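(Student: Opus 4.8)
The plan is to Taylor expand $F_\eps(s):=\frac{1}{\pc-\eps}|s|^{\pc-\eps}$ in powers of $\eps$ and then to evaluate the two integrals that survive, using the same localization and rescaling that produced Lemmas~\ref{lew1} and~\ref{lez1}. Combining $\frac{1}{\pc-\eps}=\frac1\pc+\frac{\eps}{\pc^{2}}+O(\eps^{2})$ with the expansion of $|s|^{-\eps}=e^{-\eps\log|s|}$ (with the integral form of the remainder) one gets, for $s\neq0$,
\begin{equation*}
  F_\eps(s)=\frac1\pc|s|^{\pc}+\eps\Bigl(\frac{1}{\pc^{2}}|s|^{\pc}-\frac1\pc|s|^{\pc}\log|s|\Bigr)+r_\eps(s),
\end{equation*}
where $|r_\eps(s)|\le C\eps^{2}|s|^{\pc}\bigl(1+(\log|s|)^{2}\bigr)$ for $|s|\ge1$ and $|r_\eps(s)|\le C\eps^{2}$ for $0<|s|<1$, uniformly for $\eps\in(0,\eps_{0})$.

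First I would check that $\int_\Omega a(x)\,r_\eps(V_{\si,\di,\ti})\,dx=o(\eps)$. Where $|V_{\si,\di,\ti}|<1$ this is $O(\eps^{2}|\Omega|)$. Where $|V_{\si,\di,\ti}|\ge1$ one bounds $|V_{\si,\di,\ti}|^{\pc}(\log|V_{\si,\di,\ti}|)^{2}\le C_{\theta}|V_{\si,\di,\ti}|^{\pc+\theta}$ for any $\theta>0$, uses $0\le PU_{i}\le U_{i}$ from \eqref{eq:12}, and invokes the scaling identity $\int_{\rr^{n}}U_{\delta,\xi}^{\pc+\theta}=O\bigl(\delta^{-(n-2)\theta/2}\bigr)=O\bigl(\eps^{-(n-1)\theta/2}\bigr)$; taking $\theta<2/(n-1)$ this part is $O\bigl(\eps^{2-(n-1)\theta/2}\bigr)=o(\eps)$. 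This is the same sort of estimate used for $I_{4}$ in Appendix~B (cf.\ \cite[Proposition~2]{MR1133750}), and it likewise absorbs the $O(\eps^{2})$ coming from $\frac{1}{\pc-\eps}-\frac1\pc-\frac{\eps}{\pc^{2}}$. Hence
\begin{equation*}
  \frac{1}{\pc-\eps}\int_\Omega a|V_{\si,\di,\ti}|^{\pc-\eps}
  =\frac1\pc\int_\Omega a|V_{\si,\di,\ti}|^{\pc}
   +\eps\Bigl[\frac{1}{\pc^{2}}\int_\Omega a|V_{\si,\di,\ti}|^{\pc}
   -\frac1\pc\int_\Omega a|V_{\si,\di,\ti}|^{\pc}\log|V_{\si,\di,\ti}|\Bigr]+o(\eps),
\end{equation*}
and it remains to expand the two integrals on the right, each to within $o(1)$.

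The integral $\int_\Omega a|V_{\si,\di,\ti}|^{\pc}$ was in effect already expanded in the proof of Lemma~\ref{lex1}: with $V_{\si,\di,\ti}=PU_{1}-PU_{2}$, $\eta$ as in \eqref{eta}, and $\Omega$ split into $B(\xi_{1},\eta)$, $B(\xi_{2},\eta)$ and the rest, the interaction and projection corrections are $O(\eps)$ (by \eqref{cru1}, \eqref{cru3}, \eqref{eq:11} and \eqref{ley13}) while $\int_{B(\xi_{i},\eta)}aU_{i}^{\pc}=\gamma_{1}a(s_{i})+O(\eps)$ by Lemma~\ref{lew1}; thus $\int_\Omega a|V_{\si,\di,\ti}|^{\pc}=\gamma_{1}\bigl(a(s_{1})+a(s_{2})\bigr)+O(\eps)$, which contributes the term $\tfrac{\gamma_{1}}{\pc^{2}}\bigl(a(s_{1})+a(s_{2})\bigr)\eps$. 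The genuinely new term is $\int_\Omega a|V_{\si,\di,\ti}|^{\pc}\log|V_{\si,\di,\ti}|$, split over the same three regions. Off the two balls the integrand is $\le C(U_{1}^{\pc}+U_{2}^{\pc})(1+|\log\eps|)$ outside an exponentially small neighbourhood of $\{V_{\si,\di,\ti}=0\}$ (where it contributes $o(\eps)$ trivially), so this part is $O\bigl((\delta/\eta)^{n}|\log\eps|\bigr)+o(\eps)=o(1)$ by the bound behind \eqref{ley13}. On $B(\xi_{1},\eta)$ I would write $V_{\si,\di,\ti}=U_{1}+g_{1}$ with $g_{1}=(PU_{1}-U_{1})-PU_{2}=-\alpha_{n}\delta_{1}^{(n-2)/2}H(x,\xi_{1})+R_{\delta_{1},\xi_{1}}-PU_{2}$, and expand
\begin{equation*}
  |U_{1}+g_{1}|^{\pc}\log|U_{1}+g_{1}|
  =U_{1}^{\pc}\log U_{1}+U_{1}^{\pc-1}g_{1}\bigl(\pc\log U_{1}+1\bigr)+O\bigl(U_{1}^{\pc-2}g_{1}^{2}(1+|\log U_{1}|)\bigr).
\end{equation*}
Up to the harmless factor $1+|\log U_{1}|=O(|\log\eps|)$, the linear and quadratic corrections are controlled by the quantities already bounded by $O(\eps)$, resp.\ $O(\eps^{1+\sigma})$, in \eqref{eq:10} (the $H$-term) and in the proof of Lemma~\ref{lex1} (the $PU_{2}$- and $R_{\delta_{1},\xi_{1}}$-terms, via \eqref{cru1} and \eqref{cru3}), hence are $O(\eps|\log\eps|)=o(1)$. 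It then remains to evaluate $\int_{B(\xi_{1},\eta)}a(x)U_{1}^{\pc}\log U_{1}\,dx$: under $x=\xi_{1}+\delta_{1}y$ one has $U_{1}^{\pc}\,dx=\alpha_{n}^{\pc}(1+|y|^{2})^{-n}\,dy$ and $\log U_{1}=\log\alpha_{n}-\tfrac{n-2}{2}\log\delta_{1}-\tfrac{n-2}{2}\log(1+|y|^{2})$, while $a(\xi_{1}+\delta_{1}y)=a(s_{1})+O(\eps+\delta_{1}|y|)$ with the odd part integrating to zero; letting $\delta_{1}\to0$ and recognizing the constants \eqref{g1} and \eqref{g3} this equals $a(s_{1})\bigl(\gamma_{1}\log\alpha_{n}+\gamma_{3}-\tfrac{n-2}{2}\gamma_{1}\log\delta_{1}\bigr)+o(1)$, and similarly on $B(\xi_{2},\eta)$. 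Substituting all of this into the $\eps$-expansion and collecting terms gives the assertion.

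I expect the main obstacle to be the bookkeeping in the logarithmic integral: every correction to the substitution $V_{\si,\di,\ti}\rightsquigarrow U_{i}$ picks up an extra factor $|\log\eps|$ relative to the estimates of Lemma~\ref{lez1} and Appendix~B, so one must verify that the slack there (those bounds are $O(\eps)$, resp.\ $O(\eps^{1+\sigma})$, not merely $o(1)$) suffices, i.e.\ that $\eps|\log\eps|=o(1)$ is always enough. The only other slightly delicate point is the integrability of $r_\eps(V_{\si,\di,\ti})$ and of $|V_{\si,\di,\ti}|^{\pc}\log|V_{\si,\di,\ti}|$ near $\partial\Omega$ and near the concentration points, which is routine given the Sobolev and rescaling bounds above.
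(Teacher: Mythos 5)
Your proposal is correct and carries out in full the standard Taylor-expand-then-rescale argument that the paper itself merely cites (``argue exactly as in the proof of Lemma 3.2 of \cite{MR1979006}''); the expansion in $\eps$, the remainder control via $|s|^{\pc}\log^2|s|\le C_\theta|s|^{\pc+\theta}$ with $\theta<2/(n-1)$, and the blow-up $x=\xi_i+\delta_i y$ producing the constants $\gamma_1,\gamma_3$ and the $\log\delta_i$ term are all exactly the expected ingredients. Note that your computation also quietly corrects two typos in the stated Lemma: the integrand $|V_{\si,\di,\ti}|^{\pc-1}\log|V_{\si,\di,\ti}|$ should read $|V_{\si,\di,\ti}|^{\pc}\log|V_{\si,\di,\ti}|$, and the coefficient $\gamma_1\alpha_n/\pc$ should be $\gamma_1\log\alpha_n/\pc$, both of which fall out of your rescaling step.
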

\begin{proof}
We
argue exactly as in the proof of Lemma 3.2 of \cite{MR1979006}.
 \end{proof}

\bibliographystyle{amsplain-abbrv} \bibliography{aclapi-1}

\end{document}